\newtheoremstyle{mystyle}{}{}{\slshape}{2pt}{\scshape}{.}{ }{}
\newtheorem{thm}{Theorem}[section]
\newtheorem{corollary}[thm]{Corollary}
\newtheorem{theorem}[thm]{Theorem}
\newtheorem{claim}[thm]{Claim}
\newtheorem{cor}[thm]{Corollary}
\newtheorem{prop}[thm]{Proposition}
\newtheorem{lemme}[thm]{Lemma}
\newtheorem{lemma}[thm]{Lemma}
\newtheorem{fact}[thm]{Fact}
\newtheorem{obs}[thm]{Observation}
\theoremstyle{definition}
\newtheorem{defi}[thm]{Definition}
\newtheorem{definition}[thm]{Definition}
\theoremstyle{mystyle}
\theoremstyle{remark}
\newtheorem{rem}[thm]{Remark}
\newenvironment{claimproof}[1]{\par\noindent\emph{Proof:}\space#1}{\hfill $\blacksquare_{Claim}$ \bigskip}
\DeclareMathOperator{\uth}{U^\text{\th}}
\newcommand{\ignore}[1]{}
\DeclareMathOperator{\tp}{tp}
\DeclareMathOperator{\dlr}{op-dim}
\DeclareMathOperator{\dcl}{dcl}
\DeclareMathOperator{\acl}{acl}
\DeclareMathOperator{\rk}{rk}
\DeclareMathOperator{\thr}{\text{\th}}
\DeclareMathOperator{\qfL}{qftp^\mathcal L}
\DeclareMathOperator{\qfr}{qftp^{\{R\}}}
\def\indsym#1#2{%
 \setbox0=\hbox{$\m@th#1x$}%
 \kern\wd0%
 \hbox to 0pt{\hss$\m@th#1\mid$\hbox to 0pt{$\m@th#1^{#2}$\hss}\hss}%
 \lower.9\ht0\hbox to 0pt{\hss$\m@th#1\smile$\hss}%
 \kern\wd0}
\newcommand{\ind}[1][]{\mathop{\mathpalette\indsym{#1}}}
\def\nindsym#1#2{%
 \setbox0=\hbox{$\m@th#1x$}%
 \kern\wd0%
 \hbox to 0pt{\hss$\m@th#1\not$\kern1.4\wd0\hss}
 \hbox to 0pt{\hss$\m@th#1\mid$\hbox to 0pt{$\m@th#1^{#2}$\hss}\hss}%
 \lower.9\ht0\hbox to 0pt{\hss$\m@th#1\smile$\hss}%
 \kern\wd0}
\newcommand{\nind}[1][]{\mathop{\mathpalette\nindsym{#1}}}
\DeclareMathOperator{\thind}{\ind^{\thr}}
\DeclareMathOperator{\nthind}{\nind^{\thr}}
\title{Dependent finitely homogeneous rosy structures}
\author{Alf Onshuus, Pierre Simon\footnote{Partially supported by NSF (grants no. 1665491 and 1848562).}}
\date{}
\theoremstyle{plain}
\newtheorem*{thm*}{Theorem}
\begin{document}

\maketitle

\begin{abstract}
We study finitely homogeneous dependent rosy structures, adapting results of Cherlin, Harrington and Lachlan proved for $\omega$-stable $\omega$-categorical structures. In particular, we prove that such structures have finite \th-rank and are coordinatized by a \th-rank 1 set. We show that they admit a distal, finitely axiomatizable, expansion.

These results show that there are, up to inter-definability, at most countably many
dependent rosy structures $M$ which are homogeneous in a finite relational language.
\end{abstract}

\section{Introduction}
The work towards classifying totally categorical structures is considered by many to be the starting point of ``geometric stability theory''.
This began with the characterization of $\omega$-categorical strictly minimal sets (proved by Zilber using combinatorial and model-theoretic methods, and by Cherlin -- Mills using group theory) as either degenerate geometries, or affine or projective geometries over finite fields. Cherlin, Harrington and Lachlan then proved in \cite{CHL} the Coordinatization Theorem for $\omega$-categorical $\omega$-stable theories which made it possible to analyze $\omega$-categorical $\omega$-stable structures in terms of strictly minimal sets. This was then used by Ahlbrandt and Ziegler (\cite{AZ1}) followed by Hrushovski who proves in \cite{Hr} that any totally categorical theory is quasi-finitely axiomatizable and classifies such theories in the disintegrated case.

In \cite{Si}, a characterization of unstable \th-rank 1 sets in $\omega$-categorical dependent theories was given, providing a starting point analogue to the one given by Zilber and  Cherlin -- Mills in the stable case. This opens a path towards generalizing the results of \cite{CHL} and \cite{Hr} to dependent theories. The analogue of the rank given by $\omega$-stability is the thorn-rank (denote \th-rank) and introduced in \cite{On}. Theories for which this rank is ordinal-valued are called superrosy. Hence we expect that it is possible to generalize the aforementioned results to the class of superrosy dependent $\omega$-categorical theories. In particular, all such theories should be quasi-finitely axiomatizable. In this paper, we make a first step towards this. We avoid difficulties coming from proving finiteness of the rank and non-trivial strongly minimal sets by replacing the $\omega$-categorical hypothesis by the stronger one of being finitely homogeneous, namely having quantifier elimination in a finite relational language. For such theories, we show that being merely rosy implies having finite \th-rank.

Recall from \cite{CHL} that a $\emptyset$-definable set $X$ coordinatizes $M$ if for every $a\in M$, $\acl(a)\cap X \neq \emptyset$. If $M$ is primitive, it follows that $M$ is isomorphic to the set of conjugates in $X$ of some finite subset of $X$ (a structure called a \emph{grassmanian} of $X$.)

The central result from this paper is Theorem \ref{th:coord}, which generalizes \cite[Theorem 4.1]{CHL} to our context.

\begin{thm*}[Coordinatization]
Let $M$ be finitely homogeneous, dependent and rosy. Then $M$ has finite rank and is coordinatized by a \th-rank 1 formula.
\end{thm*}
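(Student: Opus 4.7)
The plan is to prove this in two stages: first establish that $M$ has finite \th-rank, then induct on this rank to locate the coordinatizing set. Throughout, the combination of finite homogeneity (hence $\omega$-categoricity with an oligomorphic automorphism group in a finite relational language) with rosyness and NIP will be essential.

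For finiteness of \th-rank I would argue by contradiction: assume $\thr$-$\rk(M) \geq \omega$. Using rosyness, build an infinite descending chain $X_0 \supsetneq X_1 \supsetneq \cdots$ of definable sets over a growing parameter base, each inclusion witnessing \th-forking with strictly decreasing \th-rank. Finite homogeneity bounds the number of quantifier-free types of each arity, and NIP bounds the combinatorial complexity of definable families via bounded VC-dimension. Combining these should force a repetition in the chain up to an automorphism fixing the parameters used so far, contradicting the strict rank descent.

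Granted finite \th-rank $n$ of $M$, induct on $n$. The case $n = 1$ is immediate since $M$ itself coordinatizes. For $n > 1$, the heart of the argument is to produce, for every $a \in M$, an element $b \in \acl(a) \setminus \acl(\emptyset)$ whose $\emptyset$-orbit lies in a set of \th-rank strictly less than $n$. Taking the union of the (finitely many, by $\omega$-categoricity) such orbits then yields a $\emptyset$-definable $Y \subseteq M^{\mathrm{eq}}$ of \th-rank $< n$ coordinatizing $M$. The induction hypothesis applied to the structure induced on $Y$ --- itself finitely homogeneous, rosy and dependent --- produces a \th-rank 1 coordinatizing set $X \subseteq Y$, and transitivity of algebraic closure finishes the proof.

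The main obstacle is the inductive step: producing the element $b \in \acl(a)$ of strictly smaller rank. In the stable case of [CHL], this comes from canonical bases of strongly minimal types, a tool unavailable in general NIP theories. The substitute is to apply [Si]'s characterization of unstable \th-rank 1 sets in $\omega$-categorical dependent theories to locate a \th-rank 1 type $p$ realized over $\acl(a)$, and to exhibit an imaginary capturing the $\aut(M/a)$-conjugacy class of its canonical parameter; this imaginary is algebraic over $a$ and lives in a set of lower rank. Finite homogeneity is crucial both to ensure that this parameter lies in a $\emptyset$-definable sort (by closing under its finitely many $\emptyset$-orbits) and to keep the resulting reduct of the structure inside the hypotheses of the induction.
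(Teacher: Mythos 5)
The proposal correctly identifies the two-stage structure of the argument (finite rank, then a coordinatization induction), but in both stages the key ideas that make the paper's proof work are missing, and some of the things you invoke either do not help or introduce new gaps.

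For finiteness of rank, you invoke NIP via VC-dimension; the paper's Proposition \ref{prop:finite rank homogeneous} does not use dependence at all --- only rosyness and finite homogeneity. More importantly, ``force a repetition in the chain up to an automorphism fixing the parameters used so far'' does not yet yield a contradiction: even if two types in the chain are conjugate, one still needs an argument that conjugacy forces the extension between them to be non-\th-forking. The paper achieves this by two moves you do not mention: (a) it attaches to each type in the chain a Morley sequence of length $\aleph_1+\aleph_1$ and pigeonholes on the finitely many $\emptyset$-types of such \emph{indiscernible sequences of singletons} (this is the right finite invariant, not quantifier-free types of each arity), and (b) it invokes the local ranks $\thr(-,\Delta,\Pi,k)$, whose invariance under the conjugating automorphism and under non-forking restriction shows that the extension $p_k\subseteq p_l$ has the same local ranks and is therefore non-\th-forking. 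Without something like step (b), a conjugacy of types is not enough.

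For the coordinatization step, the inductive skeleton is fine, but the core is the production of the coordinatizing element, and this is where the proposal is vague. Saying ``exhibit an imaginary capturing the $\aut(M/a)$-conjugacy class of its canonical parameter'' does not explain why such an imaginary exists, why it is algebraic over $a$ and non-algebraic over $\emptyset$, or why it lives in a rank-$1$ set. The paper's mechanism is concrete: a \th-dividing formula $\phi(x,d)$ of rank $n-1$ strongly divides over some $a$, which forces $d\in\acl(ab)\setminus\acl(a)$ for $b\models\phi(x,d)$; one then arranges $\tp(d/a)$ of rank $1$ and $b\thind a$, and applies the gluing lemma (Fact \ref{fact:gluing}) to the family $\{\tp(d/a')\}_{a'\models\tp(a)}$ to get a canonical $\emptyset$-definable family $\{W_e\}_{e\in F}$ of pairwise orthogonal or independent rank-$1$ sets. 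The crucial point is Lemma \ref{emptysetdefinable}, which uses dependence and finite rank to force the gluing parameter $e$ into $\acl(\emptyset)$ and the image of $d$ into $\acl(b)$; Lascar's inequality then pins its rank to $1$. None of this machinery appears in your sketch. Finally, your remark that the induced structure on $Y$ is ``itself finitely homogeneous'' is not justified (induced structure on a definable set need not eliminate quantifiers in a finite language) and is also unnecessary: the paper keeps the induction inside a fixed $M^{\mathrm{eq}}$, applying the hypothesis to transitive definable subsets of $M$ rather than to reducts.
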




The second main theorem proved in Section \ref{sec:distal exp} is an analogue of Lachlan's result in \cite{Lac84} that a finitely homogeneous stable structure is interpretable in dense linear orders.

\begin{thm*}[Distal expansion]
Let $M$ be a finitely homogeneous dependent rosy structure, then $M$ admits an expansion which is distal and finitely axiomatizable. In particular, there are only countably many such structures up to interdefinability.
\end{thm*}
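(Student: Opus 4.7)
The plan is to combine the Coordinatization Theorem with the classification of \th-rank $1$ sets, propagate a distal expansion up through the coordinatization, and finally count.

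By the Coordinatization Theorem proved above, $M$ has finite \th-rank and is coordinatized by some $\emptyset$-definable \th-rank $1$ set $X$; every $a \in M$ satisfies $\acl(a) \cap X \neq \emptyset$. It therefore suffices to (i) produce a distal, finitely axiomatizable expansion of the induced structure on $X$, and then (ii) lift it through the coordinatization. For (i), apply the characterization from \cite{Si} of \th-rank $1$ sets in $\omega$-categorical dependent theories. In the unstable case $X$ carries, after naming finitely many parameters, a definable linear order, and naming this order yields a distal, finitely axiomatizable expansion. In the stable case $X$ is $\omega$-categorical and $\omega$-stable of \th-rank $1$, so by Cherlin--Mills--Zilber it is a finite disjoint union of pure sets and affine or projective geometries over finite fields, and by Lachlan \cite{Lac84} each such piece is interpretable in a dense linear order, giving again a distal, finitely axiomatizable expansion.

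For (ii), induct on \th-rank: at each level, the new coordinates live in a set coordinatized by a set of strictly smaller rank and hence already carry, by induction, a distal and finitely axiomatizable expansion. We add a linear order on the new coordinate sort, compatible with the orders below. To check distality lifts, observe that for any indiscernible sequence in the expanded $M$, its sequence of coordinates in each lower sort is indiscernible there and hence distal by induction; the analyzability of $M$ by these distal coordinate sorts, combined with the ``strong honest definitions'' characterization of distality, then transfers distality to $M$. Finite axiomatizability follows from finite homogeneity: the Fra\"iss\'e extension property for $M$ is a finite axiom scheme in a finite relational language $\mathcal{L}$, to which we add finitely many new relation symbols (the linear orders) and finitely many compatibility axioms. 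Finally, a finitely axiomatizable theory in a finite relational language is a single sentence, and there are only countably many such sentences up to renaming of symbols, so there are countably many such expanded theories and hence at most countably many $M$ up to interdefinability.

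The principal obstacle is step (ii). Distality is not preserved by arbitrary interpretations or by Grassmannian constructions, so one must choose orderings on each coordinate sort so that every indiscernible sequence in $M$ becomes distal after naming a single parameter. The most delicate sub-case is when the coordinatization involves a projective or affine geometry over a finite field, where the natural orderings on coordinate frames are less canonical than in the linearly ordered case, and one has to use the Cherlin--Mills--Zilber classification explicitly to exhibit a distal expansion of each geometry and verify that it interacts correctly with the layers above.
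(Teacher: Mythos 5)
Your overall plan -- use the coordinatization to reduce to rank-1 pieces, add linear orders to the stable pieces, and lift -- is in the right spirit, but it has one outright error and two substantial gaps.

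The error is your justification of finite axiomatizability. You write that ``finite axiomatizability follows from finite homogeneity: the Fra\"iss\'e extension property for $M$ is a finite axiom scheme in a finite relational language.'' This is false: the extension property yields infinitely many axioms, and finitely homogeneous structures need not be finitely axiomatizable (the random graph and the pure infinite set are standard counterexamples). The correct route, and the one the paper takes, is to cite \cite[Theorem~8.3]{Si}: a \emph{distal} finitely homogeneous structure is finitely axiomatizable. Distality is doing the work here, not just finite homogeneity, which is exactly why the whole point of the theorem is to produce a distal expansion first.

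Two further gaps concern the content of the construction. First, the ``delicate sub-case'' you worry about at the end -- projective or affine geometries over finite fields -- does not arise: in a finitely homogeneous structure, every strictly minimal set is an indiscernible (pure) set, since by Macpherson's theorem a finitely homogeneous structure interprets no infinite group (alternatively by \cite[Section~8]{Lac84}). The paper relies on this simplification throughout; without it, the construction would indeed be much harder. Second, your step (ii) elides the hard part. You say to ``add a linear order on the new coordinate sort, compatible with the orders below'' and appeal to analyzability plus strong honest definitions, but you do not verify that adding the order preserves finite homogeneity (i.e., that there is still quantifier elimination in a finite relational language), dependence, algebraic closure, and \th-forking. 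This is where most of the actual proof lives: the paper's Propositions~\ref{adding R}, \ref{R algebraic}, and \ref{R-th-rank} carry out a careful back-and-forth argument, and crucially the order is added to a strictly minimal set $H_a$ whose base parameter $a$ has \emph{minimal} \th-rank among all such, and the expansion is iterated over an enumeration of strictly minimal types ordered by increasing rank of the base. Without that rank-ordering, the orthogonality claims that make the back-and-forth work do not hold. Your top-down induction ``on rank of $M$'' is not the same decomposition, and as stated does not give you the control needed to verify the induced structure on the expanded lower sorts.
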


Finally, note that by \cite[Lemma 7.1]{Si} any structure which eliminates quantifiers in a binary language has finite \th-rank. It follows that our results apply in particular to dependent structures homogeneous in a finite binary language.

\section{Preliminaries}

We use standard model theoretic notation. We work a structure $M$ in a language $L$. Variables such as $x,y,z,\ldots$ and $a,b,c,\ldots$ usually denote finite tuples, although this is sometimes made explicit by writing say $\bar a,\bar b,\ldots$.

We usually work in $M^{eq}$, so that $\dcl$ and $\acl$ mean $\dcl^{eq}$ and $\acl^{eq}$ respectively. We might nonetheless use the latter notation to emphasize that imaginaries are allowed.

Recall that a theory $T$ is $\omega$-categorical if any two countable models of $T$ are isomorphic.
A countable structure $M$ in a relational language $L$ is \emph{homogeneous} if
for any finite $A\subseteq M$ and any partial $L$-isomorphism $\sigma\colon A\rightarrow M$, there is an automorphism $\bar \sigma\colon M\rightarrow M$ that extends $\sigma$.
A structure is \emph{finitely homogeneous} if it is homogeneous in a finite relational language. Any such structure is $\omega$-categorical.

\begin{defi}
A formula $\phi(x;y)$ witnesses the independence property in $M$ if we can find tuples $( a_i)_{i\in \omega}$ and $( b_J)_{J\in \mathcal P(\omega)}$ such that
\[M\models \phi(a_i;b_J) \Leftrightarrow i\in J.\]

A theory $T$ is \emph{dependent} (or \emph{NIP}) if no formula witnesses the independence property in any model of $T$.
\end{defi}

\subsection{Rosiness}

We will need to introduce definitions and basic results, most of which come from \cite{On}.

\begin{defi}
A formula $\phi(x,a)$ \emph{strongly divides} over $B$ if
$\tp(a/B)$ is not algebraic and $\{\phi(x,a')\}_{a'\models
tp(a/B)}$ is $k$-inconsistent for some $k$.

A formula \emph{\th-divides} over $A$ if it strongly divides over some
$B\supseteq A$.

A formula \emph{\th-forks} over $A$ if it implies a finite
disjunction of formulas each of which \th-divides over $A$.

A type \emph{\th-forks} over $A$ if it implies a formula which \th-forks
over $A$.
\end{defi}

We will now introduce three notions of rank all based in \th-forking. Since we are working in $\omega$-categorical structures
complete types are definable sets so that many of the details which differentiate the different notions don't happen in our context.

\medskip

The first rank is \th-rank is the foundational rank defined on definable sets induced by \th-forking.

\begin{definition}
\th-rank is
the least function from the set of definable sets into the ordinals (and $\infty$)
which satisfies that
\begin{itemize}
\item any consistent formula $\theta(x)$ has \th-rank greater than 0, with equality
if and only if $\theta(x)$ is algebraic, and

\item if $\phi(x,a)$ is a formula, then $\thr(\phi(x,a))\geq \alpha+1$ if
some $\theta(x,b)$ implies $\phi(x,a)$ and \th-forks over $a$.
\end{itemize}

We say that a theory is \emph{superrosy} if the \th-rank is ordinal valued.
\end{definition}

\begin{fact}
In a superrosy theory (which will be one of our underlying assumptions), the following two results hold:
\begin{itemize}
\item If $q(x)$ does nor fork over $A$ then $\thr(q(x))=\thr(p(x))$.

\item $\thr\left(\phi\left(x,a\right)\wedge
\theta\left(x,b\right)\right)=\max\left(\thr\left(\phi\left(x,a\right)\right),\thr\left(\theta\left(x,b\right)\right)\right).
$
\end{itemize}
\end{fact}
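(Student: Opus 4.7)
The plan is to establish both items directly from the definitions of \th-rank and \th-forking in the superrosy setting, drawing on the theory built up in \cite{On}. Both claims should reduce to short forking-calculus arguments once the right witnesses are chosen.

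For the first item, I would prove the two inequalities separately. That $\thr(q) \le \thr(p)$ is immediate from $p \subseteq q$: any formula in $p$ that witnesses $\thr(p) \ge \alpha + 1$ also lies in $q$. For the reverse inequality I would argue by induction on $\thr(p) = \alpha$. If $\psi(x, c)$ witnesses $\thr(p) \ge \alpha + 1$, meaning it has \th-rank $\ge \alpha$, implies a formula of $p$, and \th-forks over the base $A$ of $p$, then using that $q$ does not \th-fork over $A$ I would produce a conjugate $c'$ of $c$ over $A$ that is \th-independent from the parameters of $q$. The formula $\psi(x, c')$ still \th-forks over $A$, and a compactness plus non-forking argument ensures it is consistent with $q$, giving the required witness for $\thr(q) \ge \alpha + 1$.

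For the second item, the direction $\thr(\phi \wedge \theta) \le \max(\thr(\phi), \thr(\theta))$ is immediate from monotonicity of \th-rank under implication (the conjunction implies each conjunct). For the reverse direction, assume without loss of generality $\thr(\phi(x, a)) \ge \thr(\theta(x, b))$; the goal is $\thr(\phi \wedge \theta) \ge \thr(\phi(x, a))$. My approach is to pick a complete type $p$ extending $\phi(x, a)$ with $\thr(p) = \thr(\phi(x, a))$ and then use the extension property of \th-forking to find an automorphic copy of $b$ over $a$ that is \th-independent from a realization of $p$, so that intersecting with $\theta(x, b)$ does not drop the rank; applying item~1 to $p$ and the corresponding non-forking extension over $ab$ then transfers $\thr(p)$ to $\phi \wedge \theta$.

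The main obstacle is the second item in its literal form, since intersecting a high-rank formula with a low-rank one can in general drop \th-rank (for instance, $\phi(x) = (x = x)$ in a structure of positive rank and $\theta(x) = (x = c)$ give a conjunction of rank $0$). A proof must therefore locate some independence hypothesis, either implicit in the paper's intended usage or supplied by $\omega$-categoricity and definability of complete types in our context, that lets one rotate $b$ away from the witnessing realization of $\phi$. Pinning down that hypothesis precisely and verifying it against \cite{On}, so as to actually obtain the stated equality rather than only $\thr(\phi \wedge \theta) \le \max$, is where the real difficulty lies.
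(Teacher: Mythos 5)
First, be aware that the paper does not actually supply a proof of this Fact: both items are quoted from \cite{On}, where they are established for the local ranks $\thr(-,\Delta,\Pi,k)$, and the authors only remark that the arguments transfer to the global rank under superrosiness. Your treatment of the first item (reading the garbled statement, correctly, as ``$q$ is a non-\th-forking extension of $p=q|_A$'') is the standard transfer-by-conjugation argument and is the right plan: $\thr(q)\le\thr(p)$ is monotonicity, and the converse moves a witnessing \th-forking formula to a conjugate whose parameters are \th-independent from those of $q$. The one step you wave at --- ``a compactness plus non-forking argument ensures it is consistent with $q$'' --- is precisely where the extension and symmetry properties of \th-independence in rosy theories from \cite{On} are needed, so this item is fine modulo citing those.

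For the second item you have correctly diagnosed that the statement is false as written (your counterexample $\phi(x)=(x=x)$, $\theta(x)=(x=c)$ is decisive), but the resolution is not a hidden independence hypothesis: the $\wedge$ is a typo for $\vee$. The property ``generally known as additivity'' that is proved in \cite{On} and actually used later in the paper (e.g.\ in the proof of Theorem \ref{th:coord}, where $\phi(x,d)$ is replaced by a maximal-rank disjunct among the \th-dividing formulas it implies) is
\[
\thr\left(\phi(x,a)\vee\theta(x,b)\right)=\max\left(\thr(\phi(x,a)),\thr(\theta(x,b))\right).
\]
With $\vee$ in place the proof is routine: $\ge$ is monotonicity, since each disjunct implies the disjunction, and $\le$ goes by induction on $\alpha$, splitting any \th-forking witness $\psi$ that implies $\phi\vee\theta$ as $(\psi\wedge\phi)\vee(\psi\wedge\theta)$ and applying the induction hypothesis to conclude that one of the two pieces retains rank $\ge\alpha$. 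Your proposed salvage --- rotating $b$ to an independent conjugate --- cannot establish the equality as stated, because the claim concerns the fixed definable set determined by the given $a$ and $b$, and replacing $b$ by a conjugate changes that set; so as a proof of the literal item your proposal stalls exactly where you say it does, and the missing idea is simply the recognition that the intended operation is disjunction.
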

Both were proved in \cite{On} for the local \th-ranks (which we will define next) but the proof
holds for the global \th-rank in a superrosy theory. The second property
is generally known as ``additivity''.

\medskip

The local versions of \th-rank are defined as follows:

\begin{definition}
Given a two fixed set of formulas $\Delta$ and $\Pi$ an integer $k$ and a partial type $\phi$, the \th-rank of
$\phi$ with respect to $\Delta$,$\Pi$ and $k$, denoted $\thr(\phi, \Delta, \Pi, k)$ is defined inductively as follows:
\begin{itemize}
\item $\thr(\phi, \Delta, \Pi, k)\geq 0$ if $\phi$ is consistent.
\item For $\lambda$ a limit ordinal, $\thr(\phi, \Delta, \Pi, k)\geq \lambda $ if and only if $\thr(\phi, \Delta, \Pi, k)\geq \alpha$ for any $\alpha<\lambda$.
\item $\thr(\phi, \Delta, \Pi, k)\geq \alpha+1$ if and only if there is a $\delta(x,y)\in \Delta$ and some $\pi(y,z)\in \Pi$ and a parameter $c$ such that
\begin{itemize}
\item $\thr(\phi\wedge \delta(x,a), \Delta, \Pi, k)\geq \alpha$ for infinitely many $a\models \pi(a,c)$ and
\item $\{\delta(x,a)\}_{a\models \pi(y,c)}$ is $k$-inconsistent.
\end{itemize}
\item $\thr(\phi, \Delta, \Pi, k)=\infty$ if $\thr(\phi, \Delta, \Pi, k)>\alpha$ for any ordinal $\alpha$.
\end{itemize}
\end{definition}

By compactness the local \th-ranks are either finite or non ordinal valued for finite $\Delta, \Pi, k$, and these ranks already code \th-forking (\cite{On}), in the sense that if $\thr\left(\tp\left(a/A\right), \Delta, \Pi, k\right)\neq \infty$ then $a\thind_A b$ if and only if for any finite $\Delta,\Pi$ and any $k$
\[
\thr\left(\tp\left(a/Ab\right), \Delta, \Pi, k\right)=\thr\left(\tp\left(a/A\right), \Delta, \Pi, k\right).
\]

\medskip

\begin{fact}\label{Lascar}
If $T$ is a theory of finite \th-rank, then for any $a,b$ and any fintie
set $A$ we have
\[\thr\left(ab/A\right)=\thr\left(a/bA\right)+\thr\left(b/A\right).
\]
\end{fact}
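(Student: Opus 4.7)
The plan is to establish the equality by two opposite inequalities, both making use of preservation of \th-rank under non-forking extensions (the first item of the preceding Fact), additivity, and the symmetry and transitivity of \th-independence available in any rosy theory.

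For $\thr(ab/A) \geq \thr(a/bA) + \thr(b/A)$, I would concatenate \th-forking chains. Let $n = \thr(b/A)$ and $k = \thr(a/bA)$, and pick a witnessing chain $A = B_0 \subseteq B_1 \subseteq \cdots \subseteq B_n$ with $\tp(b/B_{i+1})$ \th-forking over $B_i$. A formula in the variable of $b$ witnessing such forking also witnesses that $\tp(ab/B_{i+1})$ \th-forks over $B_i$ (the extra variable of $a$ is dummy, and strong division is preserved). By the extension axiom for \th-independence, after replacing the chain by a $bA$-conjugate I may assume $a \thind_{bA} B_n$, whence $\thr(a/bB_n) = \thr(a/bA) = k$ by the first item of the Fact. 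Appending a witnessing \th-forking chain $bB_n \subseteq C_1 \subseteq \cdots \subseteq C_k$ for $\tp(a/bB_n)$ then produces a chain of length $n+k$ along which $\tp(ab/\,\cdot\,)$ \th-forks, yielding $\thr(ab/A) \geq n+k$.

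For $\thr(ab/A) \leq \thr(a/bA) + \thr(b/A)$, I would induct on $m = \thr(ab/A)$. If $m \geq 1$, pick $C \supseteq A$ with $\tp(ab/C)$ \th-forking over $A$ and $\thr(ab/C) \geq m - 1$. By the pair lemma for \th-independence (a consequence of symmetry and transitivity, both valid in rosy theories), $ab \nthind_A C$ forces either $b \nthind_A C$ or $a \nthind_{bA} C$. Combined with base monotonicity of the rank (enlarging the parameter set can only decrease \th-rank) and the strict drop of rank under \th-forking, in either case one gets $\thr(a/bC) + \thr(b/C) \leq \thr(a/bA) + \thr(b/A) - 1$. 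Applying the inductive hypothesis to the pair $(a,b)$ over $C$ gives $m - 1 \leq \thr(ab/C) \leq \thr(a/bC) + \thr(b/C) \leq \thr(a/bA) + \thr(b/A) - 1$, whence the claim. The main obstacle is precisely this second direction, and more specifically the invocation of the pair lemma for \th-independence; once that is in place, the argument mimics the proof of the Lascar equality in simple theories of finite SU-rank, the finiteness of the global \th-rank being what makes the induction terminate.
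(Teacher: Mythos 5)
Your proof is correct but takes a genuinely different route from the paper's. The paper gives no direct argument: it identifies $\thr$ with the $\uth$-rank over finite parameter sets in $\omega$-categorical superrosy theories and then cites the Lascar inequalities $\uth(a/Ab)+\uth(b/A)\leq\uth(ab/A)\leq\uth(a/Ab)\oplus\uth(b/A)$ proved for $\uth$ in \cite{On}; finiteness of the rank collapses these to an equality, since natural and ordinary sums agree on natural numbers. Your argument is instead self-contained and mirrors the SU-rank computation in simple theories: the lower bound by splicing a \th-forking chain for $b$ over $A$ with one for $a$ over $bA$, after conjugating (by extension) so that $a\thind_{bA}B_n$, and the upper bound by downward induction on $m=\thr(ab/A)$ via the pair lemma (left transitivity) for \th-independence, both legs resting on preservation of $\thr$ under non-\th-forking extensions and its strict drop under \th-forking. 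Your version has the merit of isolating exactly which axioms of the independence calculus (extension, left transitivity, base monotonicity) and which rank facts carry the result, and of making the role of finiteness transparent in terminating the induction; the paper's version is shorter but outsources the substance to \cite{On}. One step you should spell out: to obtain $C\supseteq A$ with $\tp(ab/C)$ \th-forking over $A$ and $\thr(ab/C)\geq m-1$, pick a formula $\theta$ witnessing $\thr(ab/A)\geq m$ (so $\theta$ \th-forks over $A$ and $\thr(\theta)\geq m-1$), then choose among the finitely many complete types over the parameters of $\theta$ extending it one that attains $\thr(\theta)$, and conjugate so that it becomes $\tp(ab/C)$; the strict rank drop under \th-forking then forces $\thr(ab/C)=m-1$ exactly, as your induction requires.
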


\begin{proof}
In $\omega$-categorical superrosy theories the $\uth$-rank (see \cite{On} for definition) is equal to the \th-rank for complete types over finite sets of parameters. The result
for $\uth$-rank is known as the ``Lascar inequalities'' (because it is an analogue to
the corresponding result in simple theories) and is proved in \cite{On}.
\end{proof}

\begin{obs}\label{U=Uth}
If $p(x)$ be a stable type in a superrosy $\omega$-categorical theory. Then $\thr(p)=MR(p)$ for any type
$p(x)$.
\end{obs}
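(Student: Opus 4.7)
The plan is to establish the chain of equalities $\thr(p) = \uth(p) = U(p) = MR(p)$, each piece coming from a known result applied in the right setting.

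First, the equality $\thr(p) = \uth(p)$ is exactly the identification used in the proof of Fact~\ref{Lascar}: in an $\omega$-categorical superrosy theory, the $\uth$-rank agrees with the $\thr$-rank on complete types over finite sets of parameters. Since we are working over a finite (indeed, any) base in an $\omega$-categorical superrosy theory, this gives the first step for free.

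For the middle equality $\uth(p) = U(p)$, I would invoke that on a stable type, thorn-forking and forking coincide. The underlying point is that in a stable theory every dividing formula is already strongly dividing after a suitable extension of the base, since the requisite family of $\omega$-many conjugates witnessing $k$-inconsistency can always be extracted from an indiscernible sequence. Since extensions of a stable type remain stable, the induction defining $\uth$ and the induction defining the classical Lascar rank $U$ proceed in lockstep and produce equal ranks.

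For the last equality $U(p) = MR(p)$, I would note that the set of realizations of a stable type in an $\omega$-categorical theory carries an induced structure that is $\omega$-stable with finite Morley rank on every complete type (stability plus countability of types forces well-foundedness of the Cantor--Bendixson analysis on the relevant sort), so in particular $MR(p)$ is well-defined and finite. The classical Cherlin--Harrington--Lachlan theorem, that Morley rank equals Lascar rank in $\omega$-categorical $\omega$-stable theories, then gives the equality.

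The main conceptual point, and the only step with any subtlety, is the middle one: passing from the equivalence of the two independence notions on $p$ to equality of the associated inductively defined ranks. This is where the stability of $p$ is used essentially, since one needs the argument identifying thorn-forking with forking to be applicable at every step of the rank induction, that is, on every extension of $p$ that appears. Once this is granted, both inductions compute the same ordinal, and combining with the first and third equalities closes the chain.
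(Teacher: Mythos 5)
Your overall strategy is the right one and matches what the paper has in mind: break the claim into the chain $\thr(p)=\uth(p)=U(p)=MR(p)$, with the first link coming from the $\omega$-categorical superrosy identification of $\uth$ with $\thr$ (as used in Fact~\ref{Lascar}), the middle link from the coincidence of forking and \th-forking on stable types, and the last from CHL. That said, two of your justifications are not quite right as stated.

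For the middle equality, the coincidence of forking and \th-forking on stable types is indeed a theorem (due to Onshuus), but your sketch --- pass to the base $B = A\cup\{a_1,a_2,\dots\}$, the tail of a witnessing indiscernible sequence, and argue that every $B$-conjugate of $a$ continues the sequence --- does not by itself give strong dividing. It shows that each individual conjugate $a'$ gives an inconsistency of $\phi(x,a')\wedge\phi(x,a_1)\wedge\cdots\wedge\phi(x,a_{k-1})$, but strong dividing requires the whole family $\{\phi(x,a'):a'\models\tp(a/B)\}$ to be $k$-inconsistent, i.e.\ any $k$ conjugates at once, and that does not follow. It is safer to simply cite the theorem that in the stable setting thorn-forking equals forking and conclude that the two inductively-defined ranks coincide.

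The real gap is in the last step. You assert that the realizations of a stable type in an $\omega$-categorical theory are $\omega$-stable, justified by ``stability plus countability of types forces well-foundedness of the Cantor--Bendixson analysis.'' But $\omega$-categoricity only gives finitely many types over \emph{finite} sets; it says nothing about type counting over countable sets, which is what $\omega$-stability needs. Whether $\omega$-categorical stable implies $\omega$-stable is essentially Lachlan's conjecture and cannot be waved away. The correct route uses the superrosy hypothesis in an essential way: from the first two equalities and superrosiness, $U(p)=\thr(p)<\infty$, so $p$ is superstable; then the CHL theorem (an $\omega$-categorical superstable theory is $\omega$-stable of finite Morley rank, with $U=MR$) finishes. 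As written, your proof never uses superrosiness beyond the first step, which is why the argument for $\omega$-stability comes up short; it is needed again to get superstability, and only then does CHL apply.
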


An important tool in \cite{CHL} is the concept of normal formulas. In the $\omega$-stable $\omega$-categorical context
a formula $\phi(\bar x; \bar y)$ is \emph{normal} if for any $\bar b_0, \bar b_1$ we have
$\phi(\bar x, \bar b_0)\Leftrightarrow \phi(\bar x, \bar b_1)$ whenever the Morley rank of
the symmetric difference of $\phi(\bar x, \bar b_0)$ and $\phi(\bar x, \bar b_1)$ is smaller than
the Morley rank of $\phi(\bar x, \bar b_0)$.

The following is Lemma 2 in \cite{La}.

\begin{fact}\label{La}
Let $M$ be a $\aleph_0$-categorical structure, and let $\phi(\bar x, \bar y)$ be a formula which implies
a complete type in $\bar y$ and such that for some $\bar b$ $\phi(\bar x, \bar b)$ defines a set of finite Morley
rank and Morley degree 1. Then there is a normal formula $\phi^*(\bar x, \bar y)$ such that
\[MR\left(\phi\left(\bar x, \bar b\right)\right)=MR\left(\phi\left(\bar x, \bar b\right)\wedge \phi^*\left(\bar x, \bar b\right)\right)=MR\left(\phi^*\left(\bar x, \bar b\right)\right).\]
\end{fact}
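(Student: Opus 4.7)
The plan is to construct $\phi^*$ as a formula isolating the unique generic type of $\phi(\bar x, \bar b)$ over a canonical representative of $\bar b$ modulo ``equality up to a symmetric difference of smaller Morley rank.''

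Set $\alpha = MR(\phi(\bar x, \bar b))$ and let $q(\bar y) = \tp(\bar b/\emptyset)$ be the complete type that $\phi$ implies. On $q(M)$ I would define the relation
\[
E(\bar y_1, \bar y_2) \iff MR\bigl(\phi(\bar x, \bar y_1) \triangle \phi(\bar x, \bar y_2)\bigr) < \alpha.
\]
Reflexivity and symmetry are immediate, and transitivity follows from $MR(A \cup B) = \max(MR(A), MR(B))$. Definability is by $\aleph_0$-categoricity: Morley rank in a uniform family takes only finitely many values on instances over parameters of a fixed complete type, so each ``level set'' of this rank function is a Boolean combination of $\emptyset$-types, hence $\emptyset$-definable. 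Let $\bar c := \bar b/E \in M^{eq}$, with $\bar c = f(\bar b)$ for some $\emptyset$-definable function $f$.

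Since $\phi(\bar x, \bar b)$ has Morley degree $1$, it admits a unique generic type $p_{\bar b}$ of rank $\alpha$; two $E$-equivalent parameters produce the same generic (the instances differ only on a set of strictly smaller rank), so $p_{\bar b}$ is $\aut(M/\bar c)$-invariant and restricts to a complete type $\tilde p$ of rank $\alpha$ over $\bar c$. Because $M^{eq}$ is again $\aleph_0$-categorical, $\tilde p$ is isolated by some formula $\theta(\bar x, \bar c)$. Define
\[
\phi^*(\bar x, \bar y) \,:=\, q(\bar y) \wedge \theta(\bar x, f(\bar y)),
\]
so that $\phi^*(\bar x, \bar b) = \theta(\bar x, \bar c)$ is precisely the realization set of $\tilde p$. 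This has Morley rank $\alpha$, and since $\phi(\bar x, \bar b)$ belongs to $p_{\bar b} \supseteq \tilde p$, the conjunction $\phi(\bar x,\bar b) \wedge \phi^*(\bar x,\bar b)$ is also in $p_{\bar b}$ and therefore has rank $\alpha$.

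The main step is to verify normality of $\phi^*$. Assume $MR\bigl(\phi^*(\bar x, \bar b_1) \triangle \phi^*(\bar x, \bar b_2)\bigr) < \alpha$, so the intersection $\theta(\bar x, \bar c_1) \wedge \theta(\bar x, \bar c_2)$ has rank $\alpha$. Then some complete rank-$\alpha$ type over $\bar c_1 \bar c_2$ extends both $\tilde p(\bar x, \bar c_i)$, providing a common non-forking extension. Each $\tilde p_i$ is stationary (Morley degree~$1$) with canonical base $\bar c_i$, so uniqueness of non-forking extensions forces $\bar c_1 = \bar c_2$ and hence $\phi^*(\bar x, \bar b_1) \Leftrightarrow \phi^*(\bar x, \bar b_2)$. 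The chief technical obstacle is pinning down that $\bar c_i$ really is (interdefinable with) the canonical base of $p_{\bar b_i}$; this is essentially built into the definition of $E$, and relies on the converse implication ``$p_{\bar b_1} = p_{\bar b_2} \Rightarrow E(\bar b_1, \bar b_2)$,'' which uses Morley degree $1$: otherwise one side of the symmetric difference would have full rank $\alpha$ and thus contain the common generic type, which lies entirely in the intersection.
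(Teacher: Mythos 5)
The paper does not prove this fact; it cites it as Lemma~2 of \cite{La} with no argument given, so there is no internal proof to compare against. Your proof is a correct, self-contained argument, and it is essentially the standard modern derivation via canonical bases in an $\omega$-stable $\aleph_0$-categorical setting.

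A few small remarks on places where the write-up compresses the real content. First, the definability of $E$: what is actually used is that the map $(\bar y_1,\bar y_2)\mapsto MR\bigl(\phi(\bar x,\bar y_1)\triangle\phi(\bar x,\bar y_2)\bigr)$ is automorphism-invariant, hence constant on each complete $\emptyset$-type of pairs, and since there are only finitely many such types, each level set is a finite union of $\emptyset$-types and so $\emptyset$-definable; your phrasing (``takes only finitely many values on instances over parameters of a fixed complete type'') garbles this slightly but the intent is right. Second, stationarity of $\tilde p$: it follows because the unique global generic $\mathfrak p$ of $\phi(\bar x,\bar b)$ is $\aut(M/\bar c)$-invariant, so $Cb(\mathfrak p)\subseteq\dcl(\bar c)$, hence $\mathfrak p|_{\bar c}=\tilde p$ is stationary; stating ``stationary (Morley degree 1)'' in passing does not quite justify that the isolating formula $\theta(\bar x,\bar c)$ has \emph{global} degree one, which is exactly the point being used. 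Third, the ``chief technical obstacle'' you flag is genuine and your sketch of the converse implication (that sharing the global generic forces $E$, using degree one) is the correct way to close it; combined with $Cb(\mathfrak p)\subseteq\dcl(\bar c)$ this shows $\bar c$ is interdefinable with $Cb(\mathfrak p)$, and then parallelism of $\tilde p_1$ and $\tilde p_2$ (which your rank-$\alpha$ common extension over $\bar c_1\bar c_2$ establishes) gives $\bar c_1=\bar c_2$. One also should observe that normality is vacuous for parameters not realizing $q$, since then one instance of $\phi^*$ is empty and the hypothesis on the rank of the symmetric difference is never met. With these clarifications the argument is complete and correct.
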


\subsection{$\omega$-categorical dependent structures.}

We assume throughout the paper that we are
working in an $\omega$-categorical dependent structure $M$.

The paper \cite{Si} develops a theory of $\omega$-categorical linear orders and then uses it along with the main result of \cite{Si1} to classify primitive dependent $\omega$-categorical structures of rank 1. A certain familiarity with the results of \cite{Si} will be needed to follow some arguments in the current paper, though we recall here the basic definitions and theorems that we need.

We start with some results on $\omega$-categorical definable linear orders, that is orders $(V,\leq)$ where both $V$ and the order relation $\leq$ are definable in $M$. If $(V,\leq)$ is a definable order, the \emph{reverse} of $V$ is the definable order $(V,\geq)$.

In the following definition, an equivalence relation $E$ on a linear order $(V,\leq)$ is \emph{convex} if every equivalence class is a convex subset of $V$.

\begin{defi}
	Let $(V,\leq)$ be an $A$-definable linear order.
We say that $(V,\leq)$ is \emph{minimal} over $A$ if:
	
	\begin{itemize}
	\item the order is dense;
	\item any $A$-definable subset of $V$ is either empty or dense in $V$;
	\item $V$ does not admit any definable (over any set of parameters) convex equivalence relation $E$ with infinitely many infinite classes.
	\end{itemize}
\end{defi}

Note that the third bullet is automatically satisfied by orders of \th-rank 1.

Closures of definable subsets of minimal orders are particularly simple:

\begin{fact}[\cite{Si} Corollary 3.14]\label{fact:subsets of minimal}
	Let $(V,\leq)$ be a minimal definable linear order over some $A$. Let $X\subseteq V^n$ be an $A$-definable subset, then the topological closure of $X$ is a boolean combination of sets of the form $x_i \leq x_j$.
\end{fact}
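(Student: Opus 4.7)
The plan is to show that $\overline X$ is a union of weak order cells --- subsets of $V^n$ defined by a complete conjunction of $\leq$- and $=$-relations among the coordinates --- because such cells are the atoms of the boolean algebra generated by the $x_i \leq x_j$, and the closure of each cell is itself a boolean combination of $x_i \leq x_j$. By $\omega$-categoricity I may assume $X$ is $A_0$-definable for some finite $A_0 \subseteq A$, and then $X$ is a finite union of realization sets $p(M)$ of complete $n$-types $p \in S_n(A_0)$; since closure commutes with finite unions, I am reduced to showing that for each such $p$, $\overline{p(M)}$ is a closed boolean combination of the $x_i \leq x_j$. Every realization of $p$ has the same weak order type (that type is decided by $p$), so $p(M)$ lies in a unique weak cell $C_p$, and the statement to prove is $\overline{p(M)} = \overline{C_p}$, i.e.\ that $p(M)$ is dense in $C_p$.

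I would prove density by induction on $n$. For $n=1$, this is precisely the minimality of $(V,\leq)$ over $A$. For $n \geq 2$, restricting to $p$ whose cell $C_p$ is strict (the other cases reduce $n$ by identifying equated coordinates), I may assume $x_1 < \cdots < x_n$ on realizations of $p$. Given an open box $B = \prod_{i=1}^n (c_i,d_i) \subseteq C_p$, I try to construct a realization of $p$ inside $B$ one coordinate at a time. The projection $\pi_1(p(M))$ is $A_0$-definable and nonempty, hence dense in $V$ by the one-dimensional minimality, so I can pick $a_1 \in (c_1,d_1)$ that extends to a realization of $p$. Having successively chosen $a_1,\ldots,a_k$, I need the set of legal values of $x_{k+1}$ (those extending to a full realization of $p$) to be dense in $(c_{k+1},d_{k+1})$.

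The main obstacle is that this set of legal values is $A_0 a_1 \cdots a_k$-definable rather than $A_0$-definable, so the first clause of minimality does not directly apply. This is where the third clause (no definable convex equivalence relation with infinitely many infinite classes) becomes essential. A failure of density would produce a gap --- an open interval disjoint from the legal set --- and by translating the parameter tuple $(a_1,\ldots,a_k)$ along its $A_0$-orbit (whose projections are dense in $V^k$ by the inductive hypothesis applied to the projection of $p(M)$ to the first $k$ coordinates), these gaps assemble into a uniformly definable family of convex sets on $V$, which yields a definable convex equivalence relation with infinitely many infinite classes, contradicting minimality. Turning this sketch into a rigorous construction --- in particular using the NIP hypothesis to control the VC-behavior of the uniformly definable family of fibers so the ``gaps'' are coherently parametrized --- is the technically delicate heart of the argument.
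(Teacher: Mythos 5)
Your reduction is sound: by $\omega$-categoricity you may work over finite $A_0$, decompose $X$ into realization sets of complete types, and, since a complete $p$ decides the weak order type of its realizations, showing $\overline{p(M)}=\overline{C_p}$ (equivalently, $p(M)$ dense in its cell $C_p$) does give the statement, because any boolean combination of the $x_i\leq x_j$ that meets the strict cell $C_p$ must contain it. You also correctly locate the genuine difficulty: having fixed $a_1,\dots,a_k$, the set of legal $a_{k+1}$ is only $A_0a_1\cdots a_k$-definable, so the first clause of minimality gives you nothing directly. So far this tracks the spirit of the argument.

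The gap is in the proposed resolution of that difficulty. You assert that a failure of density produces ``a uniformly definable family of convex sets on $V$ which yields a definable convex equivalence relation with infinitely many infinite classes.'' That inference is not justified, and I do not believe it is correct as stated. Suppose, in the simplest case $n=2$, $k=1$, that the legal fiber over $a_1$ fails to be dense in $(a_1,\infty)$: the natural object one extracts is something like a point $f(a_1)\in\overline V$ (say, the sup of the first component of the fiber, or the infimum of a gap), giving an $A_0$-definable \emph{increasing partial map} $f\colon V\to\overline V$ with $f(a)>a$. The convex sets $(a,f(a))$ overlap as $a$ varies, so they do \emph{not} directly assemble into a convex equivalence relation with infinitely many infinite classes; taking ``connected components'' of the cover can produce a single class. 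Ruling out such an $f$ requires a separate argument. (In the $\omega$-categorical setting one can iterate the extension of $f$ to $\overline V$ and observe that the relations $y=f^n(x)$ would give infinitely many $2$-types; alternatively one falls back on the machinery of cuts, shrinking, and intertwinings developed in Section 3 of \cite{Si}, which is what the actual proof of Corollary~3.14 there uses.) Neither of these appears in your sketch, and the appeal to ``NIP hypothesis to control VC-behavior'' does not supply the missing step --- this is not where dependence does its work in \cite{Si}. As written, the proposal reduces the problem correctly, names the hard step, but then does not actually carry it out, and the heuristic you give for how it would go is not the right mechanism.
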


Over larger sets of parameters, the situation is not much more complicated: we only have to take into account extra definable {cuts} in $V$. We define this now.

\begin{defi}
Let $(V,\leq)$ be an $A$-definable dense order with no first or last element. By a \emph{cut} in $V$ we mean an initial segment of it which is neither empty nor the whole of $V$ and has no last element.

We let $\overline V$ be the set of definable (over any parameters) cuts of $V$.
\end{defi}

The set $\overline V$ is naturally a union (or rather direct limit) of interpretable sets. We refer to \cite[Section 3.1]{Si} for details. In our situation, like in the rank 1 case, $\overline V$ will actually be naturally an interpretable set and the reader might prefer to think of it as such. Also $V$ is canonically included as a dense subset of $\overline V$, the point $a$ being sent to the cut $\{x\in V:x<a\}$.

\begin{definition}
	let $A\subseteq M$ and let $(V,\leq)$ be an $A$-definable linear order. For $k<\omega$, an \emph{$A$-sector} of $V^k$ is a subset of $V^k$ defined by a formula $\phi(x_0,\ldots,x_{k-1})$ which is a finite boolean combination of relations of the form:
	
	\begin{itemize}
		\item $x_i = x_j$, for $i,j<k$;
		\item $x_i \leq x_j$, for $i,j<k$;
		\item $x_i = a$, for $i<k$ and $a\in \dcl(A)\cap V$;
		\item $x_i \leq a$, for $i<k$ and $a\in \dcl(A)\cap \overline V$.
	\end{itemize}
\end{definition}

In other words, an $A$-sector is a subset of $V^k$ which is quantifier-free definable from the order along with unary predicates for $A$-definable cuts of $V$.

The following is a special case of \cite[Proposition 3.34]{Si}. We will give the general case below.

\begin{fact}\label{fact:subset one order} Let $V$ be a minimal $A$-definable orders. Let $X\subseteq V^{k}$ be an $A$-definable set. Then the closure of $X$ is an $A$-sector of $V^{k}$.
\end{fact}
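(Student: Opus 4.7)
The plan is to deduce Fact \ref{fact:subset one order} directly from Fact \ref{fact:subsets of minimal}. The latter already describes the topological closure of any $A$-definable $X \subseteq V^n$ as a boolean combination of the relations $x_i \leq x_j$, so the whole task reduces to recognizing such a boolean combination as an $A$-sector of $V^k$ in the sense of the definition preceding the statement.

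To make the identification precise, the key step is to show that the minimality of $V$ over $A$ suppresses the last two types of atoms allowed in an $A$-sector, namely the constants $x_i = a$ for $a \in \dcl(A) \cap V$ and the cuts $x_i \leq a$ for $a \in \dcl(A) \cap \overline V$. Indeed, for any $a \in \dcl(A) \cap V$ the singleton $\{a\}$ is a nonempty, non-dense $A$-definable subset of $V$, contradicting the density clause of minimality. Likewise, if some $a \in \dcl(A) \cap \overline V$ corresponded to a proper cut $I$, then $I$ would be a nonempty $A$-definable initial segment whose complement is upward closed; picking any $c \in V \setminus I$ and, using that $V$ is a dense order with no maximum, any $d > c$, the open interval $(c,d)$ lies entirely in $V \setminus I$, so $I$ is not dense in $V$, again contradicting minimality. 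Hence $\dcl(A) \cap V = \emptyset$ and $\dcl(A) \cap \overline V$ contains no proper cut, so in this situation the atoms available to build an $A$-sector of $V^k$ reduce to the relations $x_i = x_j$ and $x_i \leq x_j$.

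Putting the pieces together, Fact \ref{fact:subsets of minimal} expresses the closure $\overline X$ as a boolean combination of relations $x_i \leq x_j$, and by the previous paragraph any such boolean combination is automatically an $A$-sector in the sense required.

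I do not expect a serious obstacle at the level of Fact \ref{fact:subset one order} itself; the genuine model-theoretic work lies in Fact \ref{fact:subsets of minimal}, whose proof in \cite{Si} must depend essentially on the third clause of the definition of minimality (the prohibition against definable convex equivalence relations with infinitely many infinite classes) to prevent the closure of $X$ from witnessing a genuinely new parameter-definable cut of $V$. The statement recorded here is then essentially a repackaging of that result in the language of sectors.
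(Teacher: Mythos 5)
Your argument is correct and indeed derives Fact~\ref{fact:subset one order} cleanly from Fact~\ref{fact:subsets of minimal}. Note that the paper itself does not prove this statement: it simply cites it as a special case of \cite[Proposition 3.34]{Si} (the general form being recorded as Fact~\ref{fact: definable set product}). So your derivation from Corollary 3.14 of \cite{Si} is a legitimate alternative route rather than a reconstruction of the paper's argument.

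One remark on the logic of your second paragraph: the claims that $\dcl(A)\cap V=\emptyset$ and that there are no $A$-definable cuts are both correct consequences of minimality, but they are not actually needed for the direction you are proving. An $A$-sector of $V^k$ is \emph{by definition} any boolean combination drawn from the listed atom types, and $x_i\leq x_j$ is one of the allowed atoms; so a boolean combination of the relations $x_i\leq x_j$ alone is automatically an $A$-sector, regardless of whether the other atom types happen to be vacuous. What you have shown in addition is the converse containment (that every $A$-sector of $V^k$ over a minimal order is already a boolean combination of $x_i\leq x_j$, because the constant and cut atoms collapse), i.e.\ that the two formulations are precisely equivalent in the minimal case. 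That is a worthwhile observation and it explains why the two facts say the same thing here, but it is more than the deduction strictly requires. Your final paragraph correctly locates the genuine content in Fact~\ref{fact:subsets of minimal}.
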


We now describe the situation when more than one order is involved.

\begin{defi}
Let $(V,\leq_V)$ and $(W,\leq_W)$ be orders, definable and minimal over $A$. We say that they are \emph{intertwined}  if there is a parameter-definable increasing map $f\colon V\to \overline W$. If $A$ is clear from the context, we omit it.
\end{defi}

If $V$ and $W$ are intertwined, then the intertwining map is unique and hence is definable over $A$. Also by minimality, that map $f$ must have an image dense in $W$. It follows that $f$ induces an increasing bijection between the completions $\overline V$ and $\overline W$. Hence intertwined orders should be thought of as orders with isomorphic completions, or equivalently orders which are dense subsets of a common linear order. (See \cite[Section 3.1]{Si}.)

The opposite notion is that of {independent orders}:

\begin{defi}
Let $V$ and $W$ be two orders, definable and minimal over some $A$. We say that $V$ and $W$ are \emph{independent} if there does not exist:

$\bullet$ a set of parameters $B\supseteq A$,

$\bullet$ $B$-definable infinite subsets $X\subseteq V$ and $Y\subseteq W$, which we equip with the induced orders from $V$ and $W$ respectively,

$\bullet$ a $B$-definable intertwining from $X$ to either $Y$ or the reverse of $Y$.
\end{defi}

By \cite[Lemma 3.18]{Si}, any two disjoint convex subsets of a minimal order are independent.

Let $V$ and $W$ be two linear orders, minimal over some $A$. Then by \cite[Lemma 3.20]{Si} one of the following (mutually exclusive) statements holds:

\begin{itemize}
\item $V$ and $W$ are intertwined;
\item $V$ and the reverse of $W$ are intertwined;
\item $V$ and $W$ are independent.
\end{itemize}

We can now describe definable subsets of products of independent linear orders.

\begin{definition}
	let $A\subseteq M$ and let $V_0,\ldots,V_{m-1}$ be pairwise independent $A$-definable linear orders. For $n<\omega$, an \emph{$A$-sector} of $V_0^{k_0}\times \cdots \times V_{m-1}^{k_{m-1}}$ is a finite union of sets of the form $D_0\times \cdots \times D_{m-1}$, where each $D_i$ is an $A$-sector of $V_{i}^{k_i}$.
\end{definition}

\begin{fact}[\cite{Si}, Proposition 3.34] \label{fact: definable set product} Let $V_0,\ldots,V_{n-1}$ be pairwise independent minimal $A$-definable orders and let $A\subseteq B$. Let $X\subseteq V_0^{k_0}\times \cdots \times V_{n-1}^{k_{n-1}}$ be a $B$-definable set. Then the closure of $X$ is a $B$-sector of $V_0^{k_0}\times \cdots \times V_{n-1}^{k_{n-1}}$.
\end{fact}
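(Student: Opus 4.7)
The plan is to proceed by induction on $n$, the number of pairwise independent orders appearing in the product. The base case $n=1$ is exactly Fact \ref{fact:subset one order}. For the inductive step, write the product as $W \times V^{k}$ where $W = V_0^{k_0}\times \cdots \times V_{n-2}^{k_{n-2}}$, $V = V_{n-1}$ and $k = k_{n-1}$, and analyze $X \subseteq W \times V^{k}$ through its fibers $X_{\bar w} = \{\bar v \in V^{k} : (\bar w,\bar v)\in X\}$ for $\bar w \in W$. For each fixed $\bar w$, the set $X_{\bar w}$ is $B\bar w$-definable, so by Fact \ref{fact:subset one order} its closure $\overline{X_{\bar w}}$ is a $B\bar w$-sector of $V^{k}$.

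The crucial step is to upgrade this to: $\overline{X_{\bar w}}$ is already a $B$-sector of $V^{k}$, i.e.\ the finite list of points of $V$ and cuts of $\overline V$ used to define it can be chosen in $\dcl(B)\cap V$ and $\dcl(B)\cap \overline V$ rather than $\dcl(B\bar w)\cap V$ and $\dcl(B\bar w)\cap \overline V$. I would prove this by contradiction: a cut or point genuinely definable from $\bar w$ over $B$ but not from $B$ alone gives, after varying one coordinate of $\bar w$ in some $V_i^{k_i}$ with $i<n-1$ while fixing the others generically, a $B$-definable non-constant map from an infinite subset of $V_i$ (or, after restricting to a convex piece using \cite[Lemma 3.18]{Si}, of a single convex class) to $\overline V$. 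Passing to an infinite subset on which this map is strictly monotone produces an intertwining of $V_i$ or its reverse with a subset of $V$, contradicting the pairwise independence hypothesis via \cite[Lemma 3.20]{Si}.

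Once uniformity of fibers is established, $\omega$-categoricity implies there are only finitely many $B$-sectors $S_1,\ldots,S_r$ of $V^{k}$ that arise as some $\overline{X_{\bar w}}$. The sets $W_j = \{\bar w \in W : \overline{X_{\bar w}} = S_j\}$ are then $B$-definable, and by the induction hypothesis each closure $\overline{W_j}$ is a $B$-sector of $W$. A direct check using density of $X_{\bar w}$ in $S_j$ for $\bar w \in W_j$, plus continuity of the coordinate projections, yields
\[
\overline{X} \;=\; \bigcup_{j\leq r} \overline{W_j}\times S_j,
\]
which is by construction a $B$-sector of $W\times V^{k}$, completing the induction.

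The main obstacle is the uniformity-of-fibers claim in the middle paragraph. The subtlety is that pairwise independence is formulated between single orders, whereas $W$ is a product of many copies of several $V_i$'s. I would handle this by the coordinate-by-coordinate reduction sketched above, treating each $V_i^{k_i}$-factor by first breaking into its pairwise-convex pieces (where independence of disjoint convex sets within a minimal order from \cite[Lemma 3.18]{Si} applies) and then reducing to a single $V_i$-variable against $\overline V$. Modulo bookkeeping of this kind, the rest is a clean induction, closely paralleling the strategy used to deduce Fact \ref{fact:subset one order} from Fact \ref{fact:subsets of minimal}.
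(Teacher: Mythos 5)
Your high-level plan (induction on the number of orders, analysis via fibers over the first $n-1$ factors) is reasonable, but the central ``uniformity of fibers'' claim is false, and this sinks the argument.

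Here is a concrete counterexample. Let $M$ be the Fra\"iss\'e limit in the language $\{\leq_1,\leq_2\}$ of two linear orders, so that $V_0 = (M,\leq_1)$ and $V_1 = (M,\leq_2)$ are two independent minimal $\emptyset$-definable orders on the \emph{same} underlying set. Take $X \subseteq V_0\times V_1$ to be the diagonal $\{(w,w): w\in M\}$. For each $w$, the fiber $X_w = \{w\}$ has closure $\{w\}$, which is a $\{w\}$-sector but certainly not a $\emptyset$-sector, since $\dcl(\emptyset)\cap V_1 = \emptyset$. So your claim that $\overline{X_{\bar w}}$ is already a $B$-sector fails outright. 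The intertwining argument you give to justify it also breaks down here: the ``map'' $w\mapsto w$ from $V_0$ to $\overline{V_1}$ is non-constant but has no infinite definable strictly monotone restriction in this structure (on every infinite definable subset of $M$ the two orders disagree, by genericity), so no intertwining with $V_1$ or its reverse is produced and no contradiction with independence arises. Consequently your proposed identity $\overline{X} = \bigcup_j \overline{W_j}\times S_j$ would give back (at best) the diagonal, whereas the actual closure of the diagonal in the product of the two order topologies is all of $V_0\times V_1$: any product of non-degenerate open $\leq_1$- and $\leq_2$-intervals meets the diagonal, by existential closure of the Fra\"iss\'e limit. This is consistent with the Fact --- $V_0\times V_1$ is trivially a $\emptyset$-sector --- but shows that the closure does not decompose fiberwise. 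The interaction between fibers under the product-order topology is precisely where the content of the Fact lies, and your reduction throws that away. (The paper itself does not prove this statement; it imports it as Proposition 3.34 of \cite{Si}, so no comparison with an in-paper proof is possible, but the gap above is decisive on its own.)
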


All those notions adapt to circular orders instead of linear ones. We will not use circular orders much in this paper, but they will appear through Fact \ref{fact:gluing} below.

Let $(V,C)$ be an $A$-definable circular order, where $C(x,y,z)$ is the circular order relation. We assume that the order is dense. If $a\in V$ is any point, then we can form the linear order $V_{a\to} = V\setminus \{a\}$ equipped with the linear order inherited from $C$. If $a,b\in V$ are two points, we can naturally identify a cut in the completion of $V_{a\to}$ to a cut in the completion of $V_{b\to}$, except for the cut corresponding to $b$ which has been sent at infinity in $V_{b\to}$. We then define the completion $\overline V$ of $V$ to be the set of all cuts of $V_{a\to}$, plus $a$ itself. With the identification described in the previous sentence, this does not depend on $a$ and is $\bigvee$-definable over the same set $A$ as $V$.

\begin{defi}
The order $(V,C)$ is minimal over $A$ if:
\begin{itemize}
\item the order is dense;
\item no element of the completion $\overline V$ is algebraic over $A$;
\item there is no definable (over any set of parameters) convex equivalence relation with infinitely many infinite classes.
\end{itemize}
\end{defi}

As for linear orders, the third condition is automatically satisfied if $V$ has rank 1.

We define sectors as for linear orders and obtain a similar description of closures of definable sets.

\begin{definition}
	Let $A\subseteq M$ and let $(V,C)$ be an $A$-definable circular order. For $n<\omega$, an \emph{$A$-sector} of $V^n$ is a subset of $V^n$ defined by a formula $\phi(x_0,\ldots,x_{n-1})$ which is a finite boolean combination of relations of the form:
	
	\begin{itemize}
		\item $x_i = x_j$, for $i,j<n$;
		\item $C(x_i, x_j, x_k)$, for $i,j,k<n$;
		\item $C(a, x_i,x_j)$, for $i,j<n$ and $a\in \acl(A)\cap \overline V$;
		\item $x_i \in W$, for $i<n$ and $W$ an $\acl(A)$-definable convex subset of $V$.
	\end{itemize}
\end{definition}

\begin{definition}
	Let $A\subseteq M$ and let $V_0,\ldots,V_{m-1}$ be pairwise independent $A$-definable linear or circular orders. For $n<\omega$, a \emph{sector} (resp. \emph{$A$-sector}) of $V_0^{k_0}\times \cdots \times V_{m-1}^{k_{m-1}}$ is a set of the form $D_0\times \cdots \times D_{m-1}$, where each $D_i$ is a sector (resp. $A$-sector) of $V_{i}^{k_i}$.
\end{definition}

\begin{fact}[\cite{Si}, Corollary 4.15]\label{fact:all order types with parameters}
	Let $V_0,\ldots,V_{n-1}$ be pairwise independent, minimal $\emptyset$-definable circular orders. Let $V_n,\ldots V_{m-1}$ be pairwise independent minimal $\emptyset$-definable linear orders. Let $X\subseteq V_0^{k_0}\times \cdots \times V_{m-1}^{k_{m-1}}$ be definable over some parameters $A$. Then the topological closure of $X$ is an $A$-sector of $V_0^{k_0}\times \cdots \times V_{m-1}^{k_{m-1}}$.
\end{fact}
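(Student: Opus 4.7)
The plan is to reduce the statement to the purely linear case already handled in Fact \ref{fact: definable set product}, by ``cutting'' each circular factor at an auxiliary point. For each $i < n$, I would pick a point $p_i \in V_i$ and set $L_i = V_i \setminus \{p_i\}$ equipped with the linear order induced by $C_i$ after cutting at $p_i$. Because $V_i$ is minimal as a circular order (so in particular no element of $\overline{V_i}$ is $\acl(\emptyset)$-algebraic), $L_i$ is a dense linear order with no endpoints which is minimal over $\{p_i\}$. By \cite[Lemma 3.18]{Si}, the orders $L_0, \dots, L_{n-1}, V_n, \dots, V_{m-1}$ remain pairwise independent over $B := A \cup \{p_0, \ldots, p_{n-1}\}$.

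Next I would partition the ambient product $V_0^{k_0}\times \cdots \times V_{m-1}^{k_{m-1}}$ into finitely many $B$-definable cells according to which coordinates (within each circular factor) are equal to the chosen $p_i$. On any such cell the remaining free coordinates live in a product of pairwise independent minimal $B$-definable linear orders, so Fact \ref{fact: definable set product} applies to the intersection of $X$ with that cell and yields a $B$-sector description of its closure in the linear sense. Taking a finite union over cells produces a description of $\overline{X}$ in terms of: equalities of the form $x_j = p_i$, the induced linear orderings on the $L_i$, elements of $\dcl(B)\cap \overline{L_i}$ and $\dcl(B)\cap \overline{V_j}$, and $\dcl(B)$-definable convex subsets.

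To finish, I would translate this linear description back into a genuine $A$-sector in the circular-plus-linear sense. The leverage here is that $X$, and hence $\overline X$, is $A$-invariant, while the $p_i$ were chosen arbitrarily in the minimal set $V_i$; by an orbit/averaging argument, any cut, point, or convex piece that appears in the linear description and depends nontrivially on some $p_i$ must actually be $\acl(A)$-definable as a circular object in $V_i$, while the cuts \emph{at} the $p_i$ themselves are exactly the ones that get absorbed into the cell relations $x_j = p_i$. Replacing $\le$ by the circular relation $C$ then produces an $A$-sector of $V_0^{k_0}\times \cdots \times V_{m-1}^{k_{m-1}}$ of the required form.

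The main obstacle is this final translation step: one must verify, using minimality of each $V_i$ and $A$-invariance of $X$, that every cut, endpoint and convex subset appearing in the $B$-sector description is either $\acl(A)$-definable over the circular order (so harmless in the final sector) or else forced to be of the form ``coordinate equals $p_i$'' (so absorbed by the cell partition). The rest of the argument is a routine reduction to Fact \ref{fact: definable set product}.
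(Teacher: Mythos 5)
This statement is imported into the paper as a Fact, citing \cite[Corollary 4.15]{Si}, and the paper gives no proof of it; there is therefore no in-paper argument to compare your proposal against, only the external reference.

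On the merits of your sketch: the overall strategy of cutting each circular factor at an auxiliary point, reducing to the purely linear statement (Fact \ref{fact: definable set product}), and then re-gluing is a natural reduction and is likely close in spirit to the argument in the cited source. Two specific points deserve attention. First, the reference to \cite[Lemma 3.18]{Si} for the claim that $L_0,\ldots,L_{n-1},V_n,\ldots,V_{m-1}$ remain pairwise independent over $B$ is misplaced: that lemma concerns disjoint convex subsets of a single minimal order, whereas what you need here is that independence between \emph{distinct} factors survives the addition of the cut points $p_i$ as parameters and the passage from circular to linear order. That assertion is plausible (an intertwining between a subinterval of $L_i$ and a subinterval of $L_j$ or $V_j$ would produce one between $V_i$ and $V_j$), but it needs its own short argument rather than a citation that does not cover it. Second, and more substantially, the translation step you flag at the end is indeed where the content of the proof lies, and the ``orbit/averaging'' claim is not yet a proof: one must show that every cut of $\overline{L_i}$ appearing in the $B$-sector description is either the cut at (i.e.\ at infinity in) $p_i$ or corresponds to an $\acl(A)$-definable element of $\overline{V_i}$, and that every $\dcl(B)$-definable convex subset used either collapses to a circular convex subset definable over $\acl(A)$ or is absorbed into the cell partition. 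This requires using that $\overline{X}$ is $A$-definable together with the minimality of each $V_i$ (so that moving $p_i$ within $V_i$ acts transitively enough on candidate parameters), and it is genuinely the crux, not a routine clean-up. As it stands the proposal is a reasonable reduction but leaves the key step open, as you yourself note.
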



\subsection{\th-rank one sets, minimal orders and indiscernible sets.}
Recall that a structure is \emph{primitive} if it does not admit a non-trivial $\emptyset$-definable equivalence relation. The analysis in this paper is based on the classification of primitive \th-rank one dependent $\omega$-categorical sets
which was achieved in the stable case by \cite{CHL} and in the unstable set in \cite{Si}. We now present those results starting with the stable case.

Following \cite{CHL}, we define:

\begin{defi}
  A structure $M$ is said to be \emph{coordinatized by a $\emptyset$-definable set $X$} if
  for any $a\in M$, $\acl(a)\cap X\neq \emptyset$.
\end{defi}

Recall that a definable set $X$ in an $\omega$-categorical structure is \emph{strongly minimal} if every definable subset of $X$ is finite or cofinite. The set $X$ is \emph{strictly minimal} if it is strongly minimal and primitive. It is \emph{almost strictly minimal} if it admits a $\emptyset$-definable equivalence relation such that each class is strictly minimal.

\begin{fact}[\cite{CHL} Theorem 4.1]
Let $M$ be a primitive stable \th-rank 1 $\omega$-categorical structure. Then there is a $\emptyset$-definable almost strictly minimal set $X$ which coordinatizes $M$.
\end{fact}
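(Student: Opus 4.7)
I plan to show that, under the stated hypotheses, $M$ itself is already strictly minimal; the required coordinating set is then simply $X=M$. The argument factors into three short reductions whose combined content is that an infinite, primitive, $\omega$-categorical, stable structure of \th-rank one is strictly minimal.

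First, primitivity together with $\omega$-categoricity forces $\mathrm{Aut}(M)$ to act transitively on $M$. In an $\omega$-categorical theory each $1$-type over $\emptyset$ is isolated by a formula, so the relation ``$\tp(x/\emptyset)=\tp(y/\emptyset)$'' is a $\emptyset$-definable equivalence relation on $M$ with finitely many classes; primitivity makes it trivial, and since $M$ is infinite it can only be the total relation. Hence there is a unique $1$-type $p_{0}\in S_{1}(\emptyset)$ realized on $M$. Next, $p_{0}$ is a stable type, so by Observation~\ref{U=Uth} we have $MR(p_{0})=\thr(p_{0})=1$; being complete, $p_{0}$ has Morley degree $1$, whence $M$ itself has Morley rank and degree both equal to $1$. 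Any definable $D\subseteq M$ is therefore either finite or cofinite, giving strong minimality. A strongly minimal primitive set is by definition strictly minimal, in particular almost strictly minimal (witnessed by the total equivalence relation, whose single class is $M$). Setting $X=M$, one has $a\in\acl(a)\cap X$ for every $a\in M$, so $X$ coordinatizes $M$.

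The only point demanding any vigilance is the passage from \th-rank to Morley rank: one must verify the hypotheses of Observation~\ref{U=Uth}, namely that the theory is $\omega$-categorical and superrosy (the latter is immediate from having finite \th-rank) and that $p_{0}$ is stable (immediate from stability of $T$). Once this is in hand, no further obstacle remains: in the \th-rank one case the full coordinatization machinery of \cite{CHL} (designed to handle arbitrary $\omega$-stable $\omega$-categorical structures) collapses to this very short argument, and there is no need to invoke the inductive dismantling by strongly minimal fibers that drives the general CHL proof.
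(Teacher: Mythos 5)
Your overall plan—showing that, under the stated hypotheses, $M$ is itself strictly minimal, so that one can simply take $X=M$—is a sound way to dispose of the rank-one base case, and since the paper only cites CHL for this Fact without reproving it, there is no internal proof to measure yours against. However, there is a genuine gap in the step where you pass from Morley rank to Morley degree. You write: ``being complete, $p_{0}$ has Morley degree $1$.'' This is not a valid inference. A complete type over $\emptyset$ can perfectly well have Morley degree greater than $1$; indeed, since $M$ is transitive the only $\emptyset$-definable rank-$1$ set containing $p_0$ is $M$ itself, so $\deg(p_{0})=\deg(M)$, and asserting $\deg(p_{0})=1$ is exactly the conclusion you are trying to reach. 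A concrete counterexample to the claim as you stated it: take the theory of a single equivalence relation with two infinite classes. There is a unique $1$-type $p_0$ over $\emptyset$ (the two classes are swapped by an automorphism), $\thr(p_0)=MR(p_0)=1$, yet $\deg(p_0)=2$. That example is of course not primitive, which is precisely the point: what forces $\deg(M)=1$ is primitivity, not the mere completeness of $p_0$, and your proof so far uses primitivity only to deduce transitivity.

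The repair is short. Let $d=\deg(M)$. Over the algebraically closed base $\acl^{\mathrm{eq}}(\emptyset)$ every $a\in M$ realizes one of exactly $d$ stationary rank-one types (none are algebraic since $M$ is transitive and infinite), so the relation $E(a,b)$ defined by $\tp(a/\acl^{\mathrm{eq}}(\emptyset))=\tp(b/\acl^{\mathrm{eq}}(\emptyset))$ is an equivalence relation on $M$ with $d$ classes, each infinite. It is $\mathrm{Aut}(M)$-invariant, hence $\emptyset$-definable by $\omega$-categoricity. Primitivity now forces $d=1$, so $M$ is strongly minimal, hence strictly minimal (being primitive), hence almost strictly minimal, and $X=M$ coordinatizes $M$ since $a\in\acl(a)\cap M$ for every $a$. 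With this one paragraph inserted, the remainder of your argument goes through.
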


A strictly minimal $\omega$-categorical set is either an indiscernible set (that is an infinite set with no structure), or an affine or projective space over a finite field. See \cite[Theorem 2.1]{CHL} for a more precise statement. Importantly for us, if $M$ is finitely homogeneous, then any strictly minimal set is an indiscernible set. This can be seen either using Macpherson's result \cite{Mac} that a finitely homogeneous structure does not interpret an infinite group, thus ruling out the other possibilities, or by \cite[Section 8]{Lac84} which gives a direct proof. As we shall see in the following results, there are a lot of analogoues between minimal orders in superrosy structures of finite dp-dimension (see Definition \ref{def:op-dim}) and indiscernible sets. This is the reason why assuming finite homogeneity will remove much of the complexity in the proofs.

\medskip

In the unstable case, we have the following proved in \cite{Si} (see for instance the beginning of Section 6.4).

 \begin{fact}\label{circular}
 Let $M$ be an $\omega$-categorical, primitive, \th-rank one, dependent, unstable structure. Then there is an $\emptyset$-interpretable set $W$, which
 is a finite union of circular and linear minimal orders, any two of which are either independent or in order-reversing bijection, such that $W$ admits a finite-to-one map to $M$.
  \end{fact}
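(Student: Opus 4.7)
The plan is to extract a $\emptyset$-interpretable linear or circular order from the instability hypothesis, refine it to a \emph{minimal} order in the sense of Section 2, collect one representative per intertwining class, and use primitivity together with $\thr(M)=1$ to show that the resulting disjoint union $W$ covers $M$ via a finite-to-one map.

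For the first step, since $M$ is unstable, some formula $\phi(x;y)$ witnesses the order property on $M$. Combining NIP, $\omega$-categoricity and a standard Ramsey/compactness extraction, one obtains an indiscernible sequence $(a_i)_{i\in \mathbb Q}$ inside $M$ that is linearly ordered by (a boolean combination of) $\phi$. The canonical parameters of initial segments of such sequences live in $M^{eq}$; assembling them and quotienting by the relation identifying sequences with the same cut produces a $\emptyset$-interpretable set $V_0$ carrying an $\aut(M)$-invariant order. This order is linear or circular depending on whether a coherent orientation can be chosen globally, the circular case appearing when $\aut(M)$ moves some initial segment into its complement.

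One then refines $V_0$ to a minimal order. Rank $1$ forbids infinite convex equivalence relations with infinitely many infinite classes and, after quotienting by finite convex classes, forces density and the absence of algebraic cuts; this yields a $\emptyset$-interpretable minimal order $V$. By $\omega$-categoricity there are only finitely many $\aut(M)$-orbits of $\emptyset$-interpretable minimal orders, and by the trichotomy recalled in the preliminaries any two are either intertwined, reverse intertwined, or independent. Choose one representative $V_i$ per intertwining class and set $W = V_0 \sqcup \cdots \sqcup V_{m-1}$. Any two of the $V_i$ are then either independent or, via the reverse intertwining map (which between two minimal rank-$1$ orders with the same completion is a bijection), in order-reversing bijection.

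Finally, each point of each $V_i$ is the canonical parameter of a cut of an indiscernible sequence of elements of $M$ (or of a finite product thereof), hence algebraic over a finite tuple from $M$. This yields a $\emptyset$-definable finite-to-one multivalued map from $W$ to $M$. Its image is $\aut(M)$-invariant and nonempty (the order property produces witnesses with nontrivial $\acl$ intersection with $V_0$), so by primitivity of $M$ the map is surjective, and rank $1$ of $M$ makes it finite-to-one. The main obstacle is the first step: upgrading a formula with the order property into a genuinely \emph{definable} linear or circular order requires more than NIP Ramsey arguments alone and relies on the classification of primitive rank-$1$ unstable NIP $\omega$-categorical structures from \cite{Si1}, since \emph{a priori} the order is visible only on indiscernible sequences and not on parameters of $M$ itself.
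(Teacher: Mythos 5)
This Fact is not proved in the paper at all: it is imported verbatim from \cite{Si} (``we have the following proved in \cite{Si} (see for instance the beginning of Section 6.4)''), so there is no internal proof to compare against. Your proposal is therefore an attempt to reconstruct the proof from \cite{Si}, and while the broad strokes are in the right spirit, it has a genuine gap that you yourself half-acknowledge at the end.

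The problematic step is the first one. The assertion that ``the canonical parameters of initial segments of such sequences live in $M^{eq}$'' is false in general: an initial segment of an indiscernible sequence $(a_i)_{i\in\mathbb Q}$ cut at an arbitrary rational index is not a definable subset of $M$, so it has no canonical parameter, and the family of such cuts does not assemble into a $\emptyset$-interpretable set by any soft Ramsey/compactness argument. Making an order actually \emph{definable} is precisely the hard content of \cite{Si1} (via op-dimension and ird-patterns, as in the paper's Fact~\ref{unstable formula}) and of \cite{Si}. You note that this ``relies on the classification of primitive rank-$1$ unstable NIP $\omega$-categorical structures from \cite{Si1}'' --- but that classification is essentially the statement you are trying to prove, so at this point the proposal collapses into a citation. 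Two secondary issues: obtaining the final finite-to-one map $W\to M$ from ``each point of $W$ is algebraic over a tuple from $M$'' requires triviality of algebraic closure on the rank-one order (Proposition~\ref{6.1}, i.e.\ \cite[Prop.\,6.1]{Si}) to replace the tuple by a single element, and the assembly of $W$ from one minimal order per intertwining class, with the independence/reverse-bijection dichotomy, is exactly the content of Fact~\ref{fact:gluing} (i.e.\ \cite[Thm.\,6.4]{Si}), neither of which is invoked. Given all this, the correct move --- which is what the paper does --- is simply to cite \cite{Si} rather than attempt a reproof.
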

%
%

A starting point to obtain this $W$ is the result of \cite{Si1} that gives us linear orders in an unstable dependent structure. To state it, we first need to define the op-dimension.

\begin{definition}
	An ird-pattern of length $\kappa$ for the partial type $\pi(x)$ is given by:
	
	\begin{itemize}
		\item a family $(\phi_\alpha(x;y_\alpha):\alpha<\kappa)$ of formulas;
		\item an array $(b_{\alpha,k}:\alpha<\kappa,k<\omega)$ of tuples, with $|b_{\alpha,k}|=|y_\alpha|$;
	\end{itemize}
	such that for any $\eta\colon  \kappa \to \omega$, there is $a_\eta\models \pi(x)$ such that for any $\alpha<\kappa$ and $k<\omega$, we have \[ \models \phi_\alpha(a_\eta;b_{\alpha,k}) \iff \eta(\alpha)<k.\]
\end{definition}

\begin{definition}\label{def:op-dim}
	We say that $T$ has op-dimension less than $\kappa$, and write $\dlr(T)<\kappa$ if, in a saturated model of $T$, there is no ird-pattern of length $\kappa$ for the partial type $x=x$.
\end{definition}

\begin{fact}\label{Theorem 5.13}\label{unstable formula}
Let $T$ be $\omega$-categorical, dependent, of
op-dimension at least $l$. Then there is a finite set $A$, an infinite
$A$-definable transitive set $X$, and $l$ definable linear quasi orders
$\leq_{1,A}, \dots, \leq_{l,A}$ on $X$ such that the structure $(X,\leq_{1,A}, \dots, \leq_{l,A})$
contains an isomorphic copy of every finite structure $(X_0,\leq_{1}, \dots, \leq_{l})$ equipped with $l$
linear orders.

In particular, there are equivalence relations
$E_{1, A}, \dots, E_{l,A}$ on $X$ such that $(X/E_{i,A}, \leq_{i,A})$ are infinite dense independent
linear orders without endpoints.
\end{fact}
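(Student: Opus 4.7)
The plan is to extract the orders directly from an ird-pattern of length $l$, which exists by the hypothesis on op-dimension. Fix formulas $\phi_i(x;y_i)$ and an array $(b_{i,k} : i<l,\ k<\omega)$ witnessing the pattern. A first standard step, using Ramsey together with compactness, is to reduce to the case where this array is \emph{mutually indiscernible}: each row $(b_{i,k})_{k<\omega}$ is indiscernible, and the whole array is indiscernible over each single row. This reduction does not destroy the ird-pattern, since the universal property ``for every $\eta$ there is $a_\eta$'' is preserved under passing to a suitable type-extension of the array.

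For each fixed $i$, NIP applied to $\phi_i$ implies that along the indiscernible sequence $(b_{i,k})_k$ the truth value of $\phi_i(x;b_{i,k})$ alternates only finitely often as $k$ varies, uniformly in $x$. Combined with the ird-pattern equivalence $\phi_i(a_\eta;b_{i,k}) \iff \eta(i)<k$, this means $\phi_i$ effectively assigns to each element $a$ a ``cut location'' in the sequence $(b_{i,k})_k$. I would then define a candidate quasi-order $\leq_{i,A}$ on a suitable transitive $A$-definable set $X$ by declaring $x \leq_{i,A} y$ iff the $\phi_i$-cut determined by $x$ lies at or below the one determined by $y$. Using $\omega$-categoricity, which forces only finitely many $\phi_i$-types over any finite set, and naming finitely many parameters $A$ (including a well-chosen finite initial piece of the rows $b_{i,k}$), this relation becomes $A$-definable and is a linear quasi-order.

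For universality, the ird-pattern can be ``stretched'' by compactness: replacing $\omega$ by an arbitrary linearly ordered index set in the pattern realizes every prescribed pattern of $\phi_i$-comparisons in each coordinate independently, thanks to the mutual indiscernibility across rows. This produces the embedding of every finite structure $(X_0,\leq_1,\ldots,\leq_l)$ into $(X,\leq_{1,A},\ldots,\leq_{l,A})$. Defining $E_{i,A}$ as the symmetrization of $\leq_{i,A}$, the quotients $(X/E_{i,A},\leq_{i,A})$ are dense linear orders without endpoints, and independence of the quotients follows from the freedom to choose the $l$ coordinate patterns independently. The main obstacle, and the reason this is a substantial theorem in \cite{Si1} rather than an observation, is the passage from the pointwise existence of witnesses $a_\eta$ to a uniformly definable structure with the universal property for \emph{all} finite $l$-ordered configurations at once: translating ``for each $\eta$ there is some $a_\eta$'' into a single $A$-definable linear quasi-order on $X$ requires both $\omega$-categoricity (to bound the number of relevant types and render the cut-relations definable) and the mutual indiscernibility across rows (to ensure that the $l$ resulting orders are genuinely independent rather than coupled through hidden definable relations).
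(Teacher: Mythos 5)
This statement is cited in the paper as a Fact from \cite{Si1} and is not proved there, so there is no in-paper proof to compare against; what follows is an evaluation of your sketch on its own terms.

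Your high-level outline (extract an ird-pattern, pass to a mutually indiscernible array, use NIP to get finite alternation, read off cut locations, invoke $\omega$-categoricity for definability, stretch by compactness for universality) identifies the right ingredients, but it elides the single most substantial step, and what you offer in its place does not work as stated. The issue is the passage from the cut location of an element $a$ in the \emph{infinite} row $(b_{i,k})_{k<\omega}$ to an $A$-definable quasi-order with $A$ \emph{finite}. You say this follows from $\omega$-categoricity ``and naming\ldots a well-chosen finite initial piece of the rows,'' but the cut of $a$ can lie arbitrarily far along the sequence, well beyond any fixed finite initial segment; fixing such a segment gives you no handle on where the cut falls relative to the unfixed tail, and the naive comparison relation ``cut of $x$ precedes cut of $y$'' is a priori only $\bigwedge$-definable over the whole infinite array. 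Ryll--Nardzewski converts $\aut(M/A)$-invariance into $A$-definability only once you have exhibited a finite $A$ over which the relation is invariant, and producing such an $A$ is precisely the theorem. The argument in \cite{Si1} has to work to show that a suitable comparison formula built from $\phi_i$ alone (not from the whole row) already induces a quasi-order on a definable set; your sketch assumes this rather than proves it.

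Two further points are not addressed. First, you need a single $A$-definable \emph{transitive} set $X$ carrying all $l$ quasi-orders simultaneously; nothing in the sketch ensures the $l$ cut-relations, extracted row by row, live on a common orbit. Second, the ``in particular'' clause asserts that the quotient orders $(X/E_{i,A},\leq_{i,A})$ are \emph{independent} in the technical sense of admitting no definable intertwining on infinite pieces; your justification, that one is free to choose the $l$ coordinate patterns independently, restates the combinatorial universality of the ird-pattern but does not rule out a hidden definable intertwining between two of the resulting orders. (Universality of the embedded finite configurations does imply this once the orders are shown to be genuinely independent, but that implication is exactly what is being claimed and needs an argument.) In short, the sketch captures the scaffolding but leaves out the definability and independence arguments that make this a theorem rather than an observation, a point you acknowledge at the end but do not remedy.
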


We now recall some facts about indiscernible sets.  The first is Lemma 2.4 in \cite{CHL}, adapting the definitions to the trivial case.

\begin{fact}\label{2.5}
Let $H$ be a 0-definable indiscernible set. Let $A$ be any set, and let $H/A$ be the localization (as defined in Section 2 of \cite{CHL}). Then
\[
H/A=H/(acl(A)\cap H)=H\setminus \acl(A).
\]
\end{fact}

The following is Corollary 2.5 in \cite{CHL} after adding constants to the language.

\begin{fact}\label{2.4}
Let $H_c$ and $H_d$ be non-orthogonal indiscernible sets, definable over tuples $c$ and $d$ respectively. Then there is a unique $\acl(cd)$-definable bijection $f_{cd}:H_c\setminus\acl(cd)\rightarrow H_d\setminus\acl(cd)$. For any $x\in H_c\setminus\acl(cd)$ this bijection is defined by $f_{cd}(x)=y$ if and only if $y\in \acl(cdx)$.
\end{fact}

We can now prove some key results which hold for both indiscernible and minimally ordered sets.

\begin{fact}\label{6.4}\label{fact:gluing}\label{prop:family of orders}

Let $\{X_d\}_{d\in D}$ be a $\emptyset$-definable family of definable sets which  are either all minimal linearly ordered, all minimal circularly ordered, or all indiscernible. Then there is a $\emptyset$-interpretable set $F$ and a $\emptyset$-definable family $\{W_e\}_{e\in F}$ such that:
\begin{itemize}
\item For $e\in F$, $W_e$ is either a minimal linear or circular order, or an indiscernible set.
\item For any pair of tuples $e_1\neq e_2\in F$ the sets $W_{e_1}$ and $W_{e_2}$ are orthogonal in the indiscernible case, and in the order case, they are either independent or in order-reversing bijection.
\item For any $d\in D$ there is a unique $e\in F$ such that $X_d$ admits a definable injection into $W_e$ and in the order case, this injection is order-preserving.
\end{itemize}
\end{fact}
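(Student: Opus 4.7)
The strategy is to define a $\emptyset$-definable equivalence relation $\sim$ on $D$ identifying $d,d'$ when $X_d$ and $X_{d'}$ are ``essentially isomorphic'' (non-orthogonal in the indiscernible case, intertwined with preserved orientation in the order case), set $F := D/\sim$, and construct each $W_e$ by gluing the $X_d$'s for $[d]=e$ along their canonical bijections or intertwinings.

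I first check that $\sim$ is a $\emptyset$-definable equivalence relation. In the indiscernible case, Fact \ref{2.4} gives, for any non-orthogonal pair $X_d,X_{d'}$, a unique $\acl(dd')$-definable bijection between cofinite subsets, and composing these bijections yields transitivity. In the order case, the trichotomy of \cite[Lemma 3.20]{Si} together with uniqueness of the intertwining map makes ``intertwined'' a $\emptyset$-definable equivalence relation. Thus $F := D/\sim$ is $\emptyset$-interpretable. To construct $W_e$ for each class $e\in F$, I let $Y_e := \{(d,x) : [d]=e,\ x \in X_d\}$ and declare $(d,x) \approx (d',x')$ iff the canonical bijection (respectively, intertwining) sends $x$ to $x'$; uniqueness of these maps makes $\approx$ a $\emptyset$-definable equivalence relation, and I set $W_e := Y_e/\approx$. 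Each $X_d$ with $[d]=e$ then injects definably into $W_e$ by $x\mapsto [(d,x)]$, order-preservingly in the order case.

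It remains to verify that $W_e$ inherits the structure of a minimal order (resp.\ indiscernible set) and that distinct classes yield independent or reverse-intertwined (resp.\ orthogonal) sets. Density and absence of endpoints in $W_e$ follow from density of each $X_d$; any nonempty definable subset of $W_e$ pulls back to a nonempty definable subset of some $X_d$, hence is dense by minimality, so is dense in $W_e$; the third minimality condition lifts by the same restriction argument, and the indiscernible case is analogous via Fact \ref{2.5}. For $e_1 \neq e_2$, picking representatives $d_1,d_2$, the trichotomy forces $X_{d_1}$ and $X_{d_2}$ to be either independent or reverse-intertwined (resp.\ orthogonal), and these properties transfer to $W_{e_1},W_{e_2}$ through the dense embeddings $X_{d_i}\hookrightarrow W_{e_i}$. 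The main obstacle I expect is the gluing step: one must verify that the canonical maps $f_{d,d'}$ compose coherently so that $\approx$ is genuinely transitive and that the order (or circular order) structure descends to a well-defined structure on $W_e$. In the circular case this requires aligning orientations consistently across the class, and in all cases it rests on the uniqueness of the canonical bijections or intertwinings.
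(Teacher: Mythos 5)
Your overall strategy — mod out $D$ by the non-orthogonality (resp.\ intertwining) relation, then glue the $X_d$'s along the canonical maps — is exactly the paper's strategy for the indiscernible case (the order case is cited to \cite{Si}, Theorem 6.4). However, the point you flag as ``the main obstacle'' is indeed the whole content of the proof, and your claim that transitivity of $\approx$ ``rests on the uniqueness of the canonical bijections'' is not correct as it stands and hides the real argument.

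Here is the gap concretely. Uniqueness in Fact \ref{2.4} gives that $f_{ac}$ is the unique $\acl(ac)$-definable bijection $X_a\setminus\acl(ac)\to X_c\setminus\acl(ac)$. But the composite $f_{bc}\circ f_{ab}$ is only $\acl(abc)$-definable. So uniqueness does not tell you that $f_{bc}\circ f_{ab}$ and $f_{ac}$ coincide; a priori $\acl(abcx)\cap X_c$ could contain more than one point outside $\acl(abc)$, and the two composites could land on different ones. The paper overcomes this in two steps. First, it proves a restricted coherence claim: if $c\thind x,a,b$, then $f_{bc}(f_{ab}(x))=f_{ac}(x)$ — this uses the independence to ensure $x\notin\acl(abc)$ and to pin down the image via Fact \ref{2.4}. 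Second, to get transitivity in general (where $c$ need not be \th-independent from $a,b,x$), it introduces an auxiliary $d\in D$ with $X_d$ non-orthogonal to $X_a,X_b,X_c$ and $d\thind a,b,c,x$; the restricted claim is then applied three times to obtain $f_{cd}\circ f_{bc}\circ f_{ab}=f_{ad}=f_{cd}\circ f_{ac}$, and injectivity of $f_{cd}$ cancels it. Without this fourth-point trick your $\approx$ may simply fail to be an equivalence relation, so the set $W_e$ is not well defined. The same concern affects the orientation-alignment in the circular case, which your proposal also leaves as a remark rather than an argument. So the proposal is on the right track in structure, but the central verification is asserted rather than proved.
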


\begin{proof}
In the ordered case this is Theorem 6.4 in \cite{Si}. The strongly minimal case is known, and the proof is implicit in \cite{CHL}. We include it for completeness.

Let $R$ be the definable equivalence relation on $D$ defined by $d_1Rd_2$ if and only if $X_{d_1}$ and $X_{d_2}$ are non orthogonal. Let $E$ be a relation in $\bigcup_{d\in D} X_d$ be defined by $x_1 E x_2$ if and only if $x_1\in X_{d_1}\setminus \acl(d_1d_2)$, $x_2\in X_{d_2}\setminus \acl(d_1d_2)$, for some $d_1 R d_2$ and $f_{d_1d_2}(x_1)=x_2$ (where $f$ is the function given in Fact \ref{2.5}). We will prove that $E$ is an equivalence relation, which implies that all sets non orthogonal to a fixed $X_d$ inject into
\[
\left(\bigcup_{d' R d} X_{d'}\right)/E
\]
and the lemma holds with $F:=D/R$.

To prove that $E$ is an equivalence relation we need to prove transitivity. We first prove transitivity under some independence assumptions.

\begin{claim}
Let $a,b,c\in D$, let $x\in X_a\setminus \acl(ab)$, and assume that $c\thind x,a,b$. Then $f_{bc}(f_{ab}(x))=f_{ac}(x)$.
\end{claim}
\begin{claimproof}
The image of any element in $X_a\setminus \acl(abc)$ is $abc$-interalgebraic with its image under $f_{ab}$ so by definition it must be mapped
to an element in $X_b\setminus \acl(abc)$. It follows that both $f_{bc}\circ f_{ab}$ and $f_{ac}$ are maps from $X_a\setminus \acl(abc)$ to $X_c\setminus \acl(abc)$, and by Fact \ref{2.4} they must coincide.

By hypothesis $x\not\in \acl(ab)$ and by \th-independence (and monotonicity) $c\thind_{ab} x$ so $x\not\in \acl(abc)$. The result follows.
\end{claimproof}

Let $X_a, X_b$ and $X_c$ be non orthogonal strongly minimal sets ($a,b,c\in D$) and let $x$ be any element in $X_a$, $y\in X_b$ and $z\in X_c$. Assume that $x E y$ and $y E z$ so that $f_{ab}(x)=y$ and $f_{bc}(y)=z$ with $x\not\in \acl(ab)$ and $y\not\in \acl(bc)$.

Let $d\in D$ be such that $d\thind a,b,c,x$ with $X_d$ non orthogonal to $X_a, X_b$ and $X_c$. By the claim $f_{bd}(f_{ab}(x))=f_{ad}(x)$, $f_{cd}(f_{ac}(x))=f_{ac}(x)$ and $f_{cd}(f_{bc}(y))=f_{bd}(x)$. It follows that
\[
f_{cd}(f_{bc}(f_{ab}(x)))=f_{ad}(x)=f_{cd}(f_{ac}(x)).
\]

But in its domain $f_{cd}$ is a bijection so $f_{bc}(f_{ab}(x)))=f_{ac}(x)$. By definition $z=f_{bc}(y)=f_{bc}(f_{ab}(x))$ so $x E z$, as required.
\end{proof}

\begin{lemma}\label{emptyset definable}\label{emptysetdefinable}
Let $\{W_d\}_{d\in D}$ be a definable family of \th-rank one sets satisfying the conditions in the conclusion of Fact \ref{6.4}. Let $b$ be any element (in $M$) such that for some $d\in D$ we have $b\thind d$ and there is $\alpha \in W_d\cap \acl(bd)$. Then $d\in \acl(\emptyset)$ and $\alpha\in \acl(b)$.
\end{lemma}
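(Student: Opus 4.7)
The plan is a proof by contradiction: assume $d\notin\acl(\emptyset)$ and use an infinite $\th$-Morley sequence to force the $\th$-rank of $b$ into an infinite strictly descending chain of ordinals.

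First, two preliminary observations. The hypothesis that $W_d$ is either a minimal order or an indiscernible set forces $\acl(d)\cap W_d=\emptyset$: in the order case this is immediate from minimality (no non-empty finite $d$-definable subset of $W_d$ by density), while in the indiscernible case any $d$-algebraic element would, by indiscernibility over $d$, collapse the infinite set $W_d$ into the finite set $\acl(d)$. Hence $\alpha\notin\acl(d)$. Likewise $d\notin\acl(b)$, as otherwise $\thr(d/b)=0<\thr(d)$ would contradict $b\thind d$. Thus $\tp(d/b)$ is non-algebraic, and we may build a $\th$-Morley sequence $(d_i,\alpha_i)_{i<\omega}$ in $\tp(d\alpha/b)$ over $b$ with $d_i$ pairwise distinct and $\alpha_i\in W_{d_i}\cap\acl(bd_i)\setminus\acl(d_i)$. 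A short induction from $b\thind d_0$ and $d_i\thind_b d_{<i}$, using Fact \ref{Lascar}, gives $b\thind(d_i)_{i<n}$ for every $n$; in particular $(d_i)$ is also Morley over $\emptyset$, so $d_j\notin\acl(d_i)$ for $i\neq j$. Since the order-reversing-bijection case of Fact \ref{fact:gluing} would force $d_j\in\acl(d_i)$, it is ruled out, and the $W_{d_i}$'s are pairwise orthogonal (indiscernible case) or pairwise independent (order case).

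The heart of the argument is the claim
\[
\thr(\alpha_n/d_n,(d_i,\alpha_i)_{i<n})=1
\]
for each $n$: the generic $\alpha_n$ of $W_{d_n}$ over $d_n$ remains non-algebraic after adjoining the tuples from the other orthogonal/independent sets. This amounts to showing that no new algebraic elements appear in $W_{d_n}$ once the extra parameters are added, which is exactly the content of orthogonality in the indiscernible case (via Facts \ref{2.5} and \ref{2.4}) and follows, in the order case, from the sector descriptions of Facts \ref{fact:subset one order} and \ref{fact: definable set product}, since independence of $W_{d_n}$ from the other $W_{d_i}$'s prevents the extra parameters from contributing new named elements in $\overline{W_{d_n}}$. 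Combined with $\thr(d_n/(d_i,\alpha_i)_{i<n})=\thr(d)$ (sandwiched between $\thr(d_n/b,(d_i,\alpha_i)_{i<n})=\thr(d_n/b)=\thr(d)$ by the Morley property over $b$ and $b\thind d_n$, and $\thr(d_n)=\thr(d)$), this yields $\thr((d_n,\alpha_n)/(d_i,\alpha_i)_{i<n})=\thr(d)+1$. On the other hand, $(d_n,\alpha_n)\thind_b(d_i,\alpha_i)_{i<n}$ together with $\alpha_n\in\acl(bd_n)$ and $b\thind d_n$ gives $\thr((d_n,\alpha_n)/b,(d_i,\alpha_i)_{i<n})=\thr(d)$. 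Comparing the two expressions for $\thr(b,(d_i,\alpha_i)_{i\leq n})$ via Fact \ref{Lascar} then yields $\thr(b/(d_i,\alpha_i)_{i\leq n})<\thr(b/(d_i,\alpha_i)_{i<n})$, and iterating produces an infinite strictly decreasing chain of ordinals, a contradiction. Hence $d\in\acl(\emptyset)$; then $\acl(bd)=\acl(b)$ and $\alpha\in\acl(b)$.

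The main obstacle is the central orthogonality claim that $W_{d_n}$ acquires no new algebraic elements over the preceding data. Unpacking this requires carefully using the pairwise orthogonality/independence in Fact \ref{fact:gluing} together with the structural descriptions of definable subsets of orthogonal indiscernible sets and of products of independent minimal orders recalled in the preliminaries; the rest of the proof is then a routine but careful bookkeeping of $\th$-rank.
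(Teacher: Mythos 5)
Your proof by contradiction has the right overall shape and uses the right tools, but the central claim---that $\thr(\alpha_n/d_n,(d_i,\alpha_i)_{i<n})=1$ for \emph{every} $n$---is precisely the negation of what the paper's own proof establishes, and the way you justify it hides all the real difficulty.

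Here is the issue. The paper's argument shows, directly from finiteness of rank, that the decreasing sequence $\thr(b/\alpha_{<i}d_{<i})$ must stabilize, and at the first stabilization one gets $\alpha_i\in\acl(\alpha_{<i}d_{\leq i})$. So for some $n$ one \emph{does} have $\thr(\alpha_n/d_n,(d_i,\alpha_i)_{i<n})=0$; your claim cannot be a free-standing consequence of orthogonality, since it contradicts this unconditional finite-rank computation. Your assertion that orthogonality/independence ``prevents the extra parameters from contributing new named elements in $\overline{W_{d_n}}$'' is too hasty: orthogonality and independence are pairwise notions, while the potential algebraicity of $\alpha_n$ over $(d_i,\alpha_i)_{i<n}\cup\{d_n\}$ is a multi-parameter phenomenon that can arise ``through $b$'' (for instance if $b$ is recoverable from $(\alpha_0,\alpha_1,d_0,d_1)$, then $\alpha_n\in\acl(bd_n)$ makes $\alpha_n$ algebraic over the preceding data at every step $n\geq 2$). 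Proposition \ref{6.1} (triviality), which would let you propagate pairwise orthogonality to such multi-parameter statements under independence, is proved \emph{after} this lemma and in any case requires the parameters to be \th-independent, which the $\alpha_i$'s need not be. To make the argument go through you would be forced to isolate the first index $m<n$ witnessing the algebraicity of $\alpha_n$ and then extract a genuine pairwise correspondence between infinite subsets of $W_{d_m}$ and $W_{d_n}$; this is exactly what the paper does. Note also that in the ordered case the paper's contradiction is not that independence of the orders is violated by a correspondence, but that the induced correspondence would produce arbitrarily many independent orders on $\tp(\alpha_m/\bar d\alpha_{<m})$ and hence contradict NIP; your proof as written does not supply this step. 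In short, your Lascar computation is correct but is simply the finite-rank stabilization argument read backwards; the actual content of the proof lives in the orthogonality step, which you have asserted rather than proved, and proving it requires essentially the minimal-index and NIP analysis that the paper carries out.
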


\begin{proof}
Assume otherwise, then by independence, we have $d\notin \acl(b)$. Let $( d_i)_{i\in \omega}$ be a \th-Morley sequence over $b$ with $d=d_0$, and let $\alpha_i$ be such that $( d_i \alpha_i)_{i\in \omega}$ is a $b$-indiscernible sequence with $d_0\alpha_0=d\alpha$.

\medskip \noindent
\underline{Claim:}
For some $i$ we have $\alpha_{i}\in \acl(\alpha_{<i} d_{<i} d_i)$.
\medskip

\begin{claimproof}
By finiteness of rank, for some $i$ we have $\thr(b/\alpha_{<i}d_{<i})=\thr(b/\alpha_{\leq i}d_{\leq i})$. This implies that $b\thind_{\alpha_{<i}d_{<i}} \alpha_id_i$ and by monotonicity $b\thind_{\alpha_{<i}d_{\leq i}}  \alpha_i$. Since $\alpha_i\in \acl(d_ib)$ this implies $\alpha_i\in \acl(\alpha_{<i} d_{\leq i})$, as required.
\end{claimproof}

Let $\bar d:=( d_1, d_2, \dots)$. Notice that $\alpha_i\not\in \acl(\bar d)$ (otherwise $\alpha_i$ would witness $b\nthind_{d_i} \bar d$, contradicting our hypothesis). Let $n$ be the first element for which $\alpha_n\in \acl(\alpha_{<n}\bar d)$, which exists by the claim.

Let $m<n$ be the first natural number such that $\alpha_n\in \acl(\alpha_0\dots \alpha_m, \bar d)$. The two types $\tp(\alpha_m/ \bar d \alpha_1\dots \alpha_{m-1})$ and $\tp(\alpha_{n}/ \bar d \alpha_1\dots \alpha_{m-1})$ have $\thr$-rank 1, so we have interalgebraicity between infinite subsets of $W_{d_m}$ and $W_{d_n}$.

If the $W_{d}$'s are strongly minimal, this contradicts our hypothesis of orthogonality. In the order case, then the above interalgebraicity induces a bijection between infinite subsets of $W_{d_m}$ and $W_{d_n}$ which in turn induces independent orders on $\tp(\alpha_m/ \bar d \alpha_1\dots \alpha_{m-1})$. But $( d_i)_{i\in \omega}$ is an indiscernible sequence so we would be able to define arbitrarily many independent orders on $\tp(\alpha_m/ \bar d \alpha_1\dots \alpha_{m-1})$, contradicting that the theory is dependent.
\end{proof}

We end with the following triviality property which is a generalization of \cite[Proposition 6.1]{Si}.

\begin{prop}\label{6.1}
Let $X$ be either a $\emptyset$-definable minimal linear or circular order, or a $\emptyset$-definable indiscernible set. Suppose that $a\thind b$ are any two tuples. If $c\in \acl(ab)\cap X$, then $c\in \acl(a)\cup \acl(b)$.
\end{prop}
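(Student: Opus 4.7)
Suppose for contradiction that $c\in X\cap\acl(ab)$ with $a\thind b$, $c\notin\acl(a)$ and $c\notin\acl(b)$. My plan is to adapt the descent argument in the proof of Lemma~\ref{emptyset definable}: I extract an $a$-Morley configuration, use finite \th-rank to locate an algebraic relation between elements of $X$, and derive a contradiction from the intrinsic geometry of $X$ rather than from orthogonality between different sets $W_d$.

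First I take $(b_i)_{i<\omega}$ an $a$-indiscernible \th-Morley sequence over $a$ in $\tp(b/a)$ with $b_0=b$, chosen moreover to be a \th-Morley sequence over $\emptyset$ in $\tp(b)$ (possible because $a\thind b$); in particular $\bar b:=(b_i)_{i<\omega}$ satisfies $\bar b\thind a$ and $a\thind_{b_i}\bar b$ for each $i$. I pick $c_i\in X$ so that $(b_ic_i)_{i<\omega}$ is $a$-indiscernible with $(b_0,c_0)=(b,c)$; then $c_i\in\acl(ab_i)\cap X$ and $c_i\notin\acl(a)$. A Lascar-style computation (Fact~\ref{Lascar}) shows the $c_i$ are pairwise distinct: if $c_0=c_1$, then $c\in\acl(ab_0)\cap\acl(ab_1)$ combined with $b_1\thind_a b_0$ yields $\thr(b_1/ac)=\thr(b)$, and computing $\thr(b_1c/a)$ two ways forces $\thr(c/a)=0$, contradicting $c\notin\acl(a)$.

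Next, since $\thr(a/\bar b\,c_{<i})$ is non-increasing in $i$ and bounded by $\thr(a)<\omega$, there is a least $n$ such that $\thr(a/\bar b\,c_{<n})=\thr(a/\bar b\,c_{\leq n})$, equivalently $c_n\in\acl(\bar b\,c_{<n})$. A key observation to establish is that no $c_i$ is algebraic over $\bar b$ alone: if $c_i\in\acl(b_i)$ then by $a$-indiscernibility $c\in\acl(b)$, a contradiction; and if $c_i\in\acl(\bar b)\setminus\acl(b_i)$, then rank additivity gives $\thr(a/b_ic_i)=\thr(a)-1$, whereas $a\thind_{b_i}\bar b$ together with $c_i\in\acl(\bar b)$ gives $\thr(a/b_ic_i\bar b)=\thr(a)$, a contradiction. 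In particular $n\geq 1$. Then I let $m<n$ be the least index with $c_n\in\acl(\bar b\,c_{\leq m})$. By minimality of $n$ applied at $i=m$ we have $c_m\notin\acl(\bar b\,c_{<m})$, while by minimality of $m$ (together with the observation just made, which handles the case $m=0$) we have $c_n\notin\acl(\bar b\,c_{<m})$. So both $\tp(c_m/\bar b\,c_{<m})$ and $\tp(c_n/\bar b\,c_{<m})$ have \th-rank $1$, and Fact~\ref{Lascar} forces $c_m$ and $c_n$ to be interalgebraic over $\bar b\,c_{<m}$.

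The final step invokes the following geometric fact: for any parameter set $A$, two distinct elements of $X\setminus\acl(A)$ are never interalgebraic over $A$. In the indiscernible-set case this is forced by the action of $\aut(M/A)$ on $X\setminus\acl(A)$ as the full symmetric group, so any $A$-definable finite-to-finite correspondence on $X\setminus\acl(A)$ must lie in the diagonal. In the minimal linear or circular order case, it follows from Facts~\ref{fact:subsets of minimal}, \ref{fact:subset one order} and \ref{fact:all order types with parameters}: $A$-definable subsets of $X$ are boolean combinations of cuts in $\overline X\cap\acl(A)$, and adding a single element $v_1\in X\setminus\acl(A)$ to $A$ introduces only the cut at $v_1$, leaving dense intervals in which any distinct second generic element retains \th-rank $1$. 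Applying this with $A=\bar b\,c_{<m}$ contradicts the interalgebraicity of $c_m\neq c_n$. The main obstacle is precisely this last step: in Lemma~\ref{emptyset definable} the final contradiction came from orthogonality between distinct $W_{d_m},W_{d_n}$, whereas here both $c_m,c_n$ inhabit the same $X$, so the contradiction must be extracted from the triviality of the geometry of $X$ itself.
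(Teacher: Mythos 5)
Your construction through step 6 is sound: the Morley sequence of $\tp(b/a)$ over $a$ (also Morley over $\emptyset$, which is legitimate since $a\thind b$), the Lascar computation showing the $c_i$'s are pairwise distinct, the rank-stabilization argument producing the least $n$ with $c_n\in\acl(\bar b\,c_{<n})$, the observation that no $c_i$ is algebraic over $\bar b$ alone, and the resulting interalgebraicity of $c_m\neq c_n$ over $\bar b\,c_{<m}$ — all of this checks out. It is a genuinely different outer structure than what the paper uses: the paper cites \cite{Si} Proposition 6.1 directly for the ordered case and, for the indiscernible case, runs a \emph{counting} argument (take $\bar a=(a_1,\ldots,a_N)$ a Morley sequence of $\tp(a/b)$ over $b$; the resulting $c_i$'s are $N$ independent non-algebraic elements of the trivial geometry $X/\bar a$ sitting in $\acl(\bar a b)$, contradicting $\thr(b/\bar a)=\thr(b)$ once $N>\thr(b)$). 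Your descent argument à la Lemma~\ref{emptysetdefinable} is a reasonable alternative skeleton; the question is what it reduces to.

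The gap is in the final step, and it is a real one for the ordered case. The ``geometric fact'' you invoke — for any parameter set $A$, no two distinct elements of $X\setminus\acl(A)$ are interalgebraic over $A$ — is not a consequence of Facts~\ref{fact:subsets of minimal}, \ref{fact:subset one order}, \ref{fact:all order types with parameters}, and the justification you give is circular. You write that adding a single $v_1\in X\setminus\acl(A)$ to $A$ ``introduces only the cut at $v_1$''; but this \emph{is} (a form of) the claim. Fact~\ref{fact:all order types with parameters} says that an $Av_1$-definable subset of $X$ has closure an $Av_1$-\emph{sector}, and an $Av_1$-sector is defined using cuts in $\dcl(Av_1)\cap\overline X$ (respectively $\acl(Av_1)\cap\overline X$), not only in $\dcl(A)\cup\{v_1\}$ — so the sector-structure fact provides no a priori control on $\acl(Av_1)\cap\overline X$, which is exactly what needs bounding. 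Working instead with the closure of an $A$-definable finite-to-finite correspondence $R\subseteq X^2$ is delicate for a different reason: the closure $\overline R$ is an $A$-sector, but it need not be finite-to-finite, and the fibre $(\overline R)_x$ can be a nondegenerate interval even when $R_x$ is finite, so you cannot directly conclude that $R_x\subseteq\{x\}\cup\acl(A)$ from the shape of the sector. Note also that, specializing your Proposition 6.1 to $a=v_1\in X$ and $b=A$ (a single element of a rank-1 set, so $v_1\notin\acl(A)$ is equivalent to $v_1\thind A$), the geometric fact is \emph{exactly an instance of what is being proved}. This is in fact precisely the content of \cite{Si} Proposition 6.1 for orders, which the paper cites rather than reproving. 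In short: your reduction is valid and a nice alternative to the paper's counting argument, but it reduces the ordered case to a statement that your cited facts do not establish, and which the paper itself does not reprove; the indiscernible case is fine modulo quoting CHL's triviality of localizations (Fact~\ref{2.5}) more carefully in place of the slightly informal ``acts as the full symmetric group'' justification.
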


\begin{proof}
The statement for linear and circular orders is precisely Proposition 6.1 of \cite{Si}. The proof for indiscernible sets is essentially the same:

If $a\in \acl(b)$ then they are both in $\acl(\emptyset)$ and there is nothing to show.

Assume $a\not \in \acl(b)$ and let $a_1,a_2,\dots$ be a \th-Morley sequence (over $\emptyset$) of $\tp(a/b)$ over $b$. Let $c_i$ be such that $\tp(c_ia_i/b)=\tp(ca/b)$. Assume towards a contradiction that $c_i\not\in \acl(a_i)\cup \acl(b)$, so that by \th-independence $c_i\neq c_j$ for any $i\neq j$. Let $\overline{a}= a_1,a_2,\dots, a_N$. By construction $b\thind \overline a$ so $b\thind_{a_i} \overline a$, $bc_i\thind_{a_i} \overline a$ and $c_i\thind_{a_i} \overline a$ by monotonicity. In particular, $c_i\not\in \acl(\overline a)$.

Let $X_{\overline a}=X/{\acl(\overline a)}=X\setminus \acl({\overline a})$ be the localization of
$X$ over $\overline a$. But then $c_1,\dots, c_N$ is an $N$-dimensional set in the algebraic closure of $b$. Since $\thr(b/\emptyset)=\thr(b/\overline a)$ is fixed and $N$ can be any natural number, this contradicts $\omega$-categoricity.\end{proof}

\subsection{Distality}

Distality was introduced in \cite{distal}. It is meant to capture the notion of a purely unstable NIP structure.

\begin{definition}\label{def:distal}
	A structure $M$ is called \emph{distal} if for every formula $\phi(x;y)$, there is a formula $\psi(x;z)$ such that for any finite set $A\subseteq M$ and tuple $a\in M^{|x|}$, there is $d\in A^{|z|}$ such that $\psi(a;d)$ holds and for any instance $\phi(x;b)\in \tp(a/A)$, we have the implication \[M\models (\forall x) \psi(x;d)\to \phi(x;b).\]
\end{definition}

Assume that $M$ is finitely homogeneous. Then if $M$ is distal, there is an integer $k$ such that for any finite set $A$ and singleton $a\in M$, there is $A_0\subseteq A$ of size $\leq k$ such that $\tp(a/A_0)\vdash \tp(a/A)$. (That is, if $\tp(a'/A_0)=\tp(a/A_0)$, then $\tp(a'/A)=\tp(a/A)$.)

For us, the main reason for being interested in distality is the following result from \cite{Si}

\begin{fact}[\cite{Si} Theorem 8.3]\label{fact:distal fin axiom}
A distal finitely homogeneous structure is finitely axiomatizable.
\end{fact}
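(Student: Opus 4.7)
The plan is to exhibit a finite $T_0\subseteq \mathrm{Th}(M)$ whose every model satisfies $\mathrm{Th}(M)$, hence axiomatizing it. Let $L$ be the finite relational language, let $r$ be its maximum arity, and let $k$ be the distality constant from the paragraph preceding the statement (for every finite $A\subseteq M$ and singleton $a$, some $A_0\subseteq A$ with $|A_0|\leq k$ satisfies $\tp(a/A_0)\vdash \tp(a/A)$). Set $n:=k+r$. Define $T_0$ to consist of: (a) for each isomorphism type $\tau$ of an $L$-structure of size $\leq n+1$, an axiom asserting whether $\tau$ is realized, matching $M$; and (b) for each iso type $\sigma$ of size $\leq n$ realized in $M$ and each one-point extension $\tau$ of $\sigma$ also realized in $M$, the extension axiom $\forall\bar a(\sigma(\bar a)\to\exists b\,\tau(\bar a,b))$. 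Both families are finite, and $T_0\subseteq \mathrm{Th}(M)$ by construction.

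Since $\mathrm{Th}(M)$ is $\omega$-categorical, it suffices (by Ryll--Nardzewski) to show that every countable $N\models T_0$ is isomorphic to $M$. I would do this via a Fraisse-style back-and-forth, establishing by induction on $m$ that (i) the iso types of $m$-tuples realized in $N$ match those realized in $M$, and (ii) $N$ has the one-point extension property for every $m$-tuple: any realized iso type $\sigma$ of size $m$ extends to any $M$-realized one-point extension $\tau$. For $m\leq n$ both (i) and (ii) are immediate from (a) and (b).

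The inductive step of (ii) for $m>n$ is where distality enters. Given $\bar a\in N$ of size $m$ with $\qft(\bar a)$ matching some $\bar a'\in M$, and a one-point extension $\tau$ realized by $\bar a'b'\in M$, pick $\bar a'_0\subseteq\bar a'$ of size $\leq k$ witnessing distality for $b'$ over $\bar a'$. Let $\bar a_0\subseteq\bar a$ be the corresponding subtuple. Axiom (b) applied to $\bar a_0$ (of size $\leq k\leq n$) produces $b\in N$ with $\qft(\bar a_0 b)=\qft(\bar a'_0 b')$. To see $\qft(\bar a b)=\tau$, it suffices to verify each $r$-subtuple $\bar c b$ for $\bar c\subseteq\bar a$: the enlarged tuple $\bar a_0\bar c b$ has size $\leq n+1$, so its iso type is realized in $M$ by (a); an application of homogeneity of $M$ combined with distality for $(\bar a',b')$ identifies this iso type with the intended one coming from $\bar a'_0\bar c'b'$. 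Statement (i) then follows: the $\supseteq$ direction by iterated application of (ii), and the $\subseteq$ direction by a parallel distality argument, showing that any iso type realized in $N$ has compatible size-$\leq k+1$ sub-iso-types realized in $M$ which then uniquely determine (via distality and saturation of $M$) a realization of the full iso type in $M$.

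The technical core, and main obstacle, is the transfer of distality from $M$ to $N$. Distality is a priori a statement about types over unbounded parameter sets, but by the choice of $k$ it reduces to a condition on iso types of size $\leq k+r=n$, which axiom (a) guarantees to agree in $M$ and $N$. Thus the distality-based identification of extension types can be carried out inside $N$ by tracking a chain of $M$-automorphisms applied to $\bar a'_0,\bar c',b'$; this is the delicate step that must be made precise. Once verified, the back-and-forth closes and we obtain $N\cong M$, proving $T_0\models\mathrm{Th}(M)$ and hence finite axiomatizability.
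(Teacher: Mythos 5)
The paper does not actually prove this statement: it is quoted as a Fact and attributed to \cite{Si}, Theorem~8.3, so there is no ``paper's own proof'' against which to compare. Judged on its own terms, your plan is in the right spirit (axiomatize by the iso types of bounded size plus bounded one-point extension axioms, with distality supplying the bound), and the \emph{forth} half of the back-and-forth is essentially correct: choosing $\bar a_0'\subseteq\bar a'$ of size $\leq k$ witnessing distality for $b'$, using the extension axiom to produce $b$ over $\bar a_0$, and then verifying an arbitrary $r$-subtuple $\bar c b$ by realizing $\qft(\bar a_0\bar c b)$ in $M$ (size $\leq k+r\leq n$, so axiom (a) applies), moving it onto $\bar a_0'\bar c'$ by homogeneity, and invoking $\tp(\cdot/\bar a_0')\vdash\tp(\cdot/\bar a')$ --- this closes.

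The gap is in the $\subseteq$ direction of (i), that every $m$-tuple in $N$ has a qf-type realized in $M$, and it is not a matter of bookkeeping. You write that the size-$\leq k{+}1$ sub-iso-types ``uniquely determine (via distality and saturation of $M$) a realization of the full iso type in $M$.'' Distality gives you \emph{uniqueness}: two elements of $M$ realizing the same type over the distality-witnessing $k$-subtuple agree over the whole tuple. It does not give \emph{existence}: given an $N$-element $a_m$ over $\bar a_{<m}$, you must produce an $a_m'\in M$ over $\bar a'_{<m}$ whose type restricts correctly to every relevant subtuple. The obvious move --- realize $\qft(\bar a_I a_m)$ over $\bar a'_I$ for a chosen $k$-subtuple $I$, and then argue the result agrees on all other subtuples by distality --- is circular, because the correct $I$ to use is the distality-witnessing set of the very element $a_m'$ you are trying to build, and different candidate types over $\bar a'_{<m}$ have different witnessing sets. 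The ``compatibility'' of the local types of $a_m$ holds inside $N$, but $N$ is only known to satisfy the finitely many axioms in $T_0$; you have not shown that this local data assembles into a type consistent with $\mathrm{Th}(M)$. You yourself flag the transfer of distality to $N$ as ``the delicate step that must be made precise,'' and indeed that is exactly where the argument is incomplete; as written, ``tracking a chain of $M$-automorphisms'' does not resolve the circularity. A correct proof must supply a genuine local-to-global existence argument here (and the constant $n=k+r$ may then need to be revisited), which is a new idea, not a parallel of the forth step.
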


In Section \ref{sec:distal exp}, we will show that a finitely homogeneous dependent rosy structure has a distal expansion which is also finitely homogeneous. As we construct this expansion, we will need to make sense of a definable set being distal, as opposed to the whole structure. All that we need is summarized in a series of facts below. However, since they do not all appear explicitly in the literature, we give some details.

We first define distality for indiscernible sequences following \cite{distal}. An indiscernible sequence $I+J+K$ ($I,J$ and $K$ densely ordered without endpoints) is distal if for any tuples $a$ and $b$, if $I+a+J+K$ and $I+J+b+K$ are indiscernible, then so is $I+a+J+b+K$. A structure is distal if and only if all indiscernible sequences are distal. We will say that a partial type $\pi(x)$ is distal if every indiscernible sequence in $\pi$ is distal. If $\pi(x)$ is a formula, this is equivalent to saying that the structure with universe $\pi(M)$ equipped with the induced structure from $M$ is distal.

The following fact is implicit in \cite{distal}, where it is proved that a theory is distal if and only if all 1-types are distal, but does not seem to appear anywhere in the literature, so we give some explanations below.

\begin{fact}\label{fact:distal trans}
 If $\tp(a/A)$ and $\tp(b/Aa)$ are distal, then $\tp(ab/A)$ is distal.
 \end{fact}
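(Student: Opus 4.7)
The plan is to verify the indiscernible-sequence characterisation of distality for $\tp(ab/A)$ by handling the two coordinates successively, using distality of $\tp(a/A)$ for the first and distality of $\tp(b/Aa)$ for the second. Concretely, I start with an $A$-indiscernible sequence $I+J+K$ of tuples $(a_i,b_i)\models \tp(ab/A)$, with $I,J,K$ densely ordered without endpoints, together with tuples $(a^*,b^*)$ and $(a',b')$ such that $I+(a^*,b^*)+J+K$ and $I+J+(a',b')+K$ are both $A$-indiscernible. The goal is to prove that $I+(a^*,b^*)+J+(a',b')+K$ is $A$-indiscernible.

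The first step is essentially routine. Projecting to the $a$-coordinate produces an $A$-indiscernible sequence $I_a+J_a+K_a$ of realisations of $\tp(a/A)$, and the two projected insertions $I_a+a^*+J_a+K_a$ and $I_a+J_a+a'+K_a$ are $A$-indiscernible as well. Distality of $\tp(a/A)$ then immediately yields that $I_a+a^*+J_a+a'+K_a$ is $A$-indiscernible.

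For the second step, let $\alpha$ denote this combined $a$-sequence and set $B:=A\cup\alpha$. Enlarging the base from $A$ to $B$, the hypothesis gives that the $b$-projections $I_b+b^*+J_b+K_b$ and $I_b+J_b+b'+K_b$ (restricted to indices where $\alpha$ is placed in $B$) are indiscernible over the corresponding restrictions of $B$. I would use distality of $\tp(b/Aa)$ to merge these into an indiscernibility of $I_b+b^*+J_b+b'+K_b$ over $B$, which, combined with Step~1, gives the required indiscernibility of the full sequence over $A$. To apply distality of $\tp(b/Aa)$, which concerns sequences of realisations of $\tp(b/Aa)$ over a single $Aa$, I would use the $A$-indiscernibility of $\alpha$ from Step~1: fix $a_0\in\alpha$, and for each position $i$ in $\alpha$ choose an $A$-automorphism $\sigma_i$ with $\sigma_i(a_0)=a_i$; setting $\tilde b_i:=\sigma_i^{-1}(b_i)$ produces a sequence of realisations of $\tp(b/Aa_0)$ to which distality of $\tp(b/Aa_0)$ can be applied, and transporting back through the $\sigma_i$ gives the desired indiscernibility over $B$.

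The main obstacle is exactly this transport in Step~2: one must show that the sequence $(\tilde b_i)$ is itself $Aa_0$-indiscernible (so that distality of $\tp(b/Aa_0)$ applies to it in both the hypothesis and the conclusion), and that the conclusion transports back faithfully through the $\sigma_i$'s. This is where some care is required, and where a Ramsey/extraction argument on the indiscernible sequence $\alpha$ may be needed to produce simultaneously coherent choices of $\sigma_i$ that preserve indiscernibility of the $(\tilde b_i)$. Once this bookkeeping is set up, the two applications of distality compose cleanly to give distality of $\tp(ab/A)$.
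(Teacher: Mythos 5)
Your plan — handle the $a$-coordinate first using distality of $\tp(a/A)$, then handle the $b$-coordinate over the enlarged base using distality of $\tp(b/Aa)$ — is the right decomposition, but the execution has a genuine gap precisely at the point you flag as "where some care is required." The difficulty is that distality of $\tp(b/Aa)$ is a statement about indiscernible sequences of realizations of $\tp(b/Aa_0)$ over the single fixed base $Aa_0$, whereas in your Step~2 each $b_i$ realizes $\tp(b/Aa_i)$ for a varying $a_i$. The transport you propose — choosing automorphisms $\sigma_i$ with $\sigma_i(a_0)=a_i$ and passing to $\tilde b_i := \sigma_i^{-1}(b_i)$ — does not obviously produce an $Aa_0$-indiscernible sequence: the $\sigma_i$ are chosen independently, there is no compatibility between $\sigma_i^{-1}$ and $\sigma_j^{-1}$ for $i\neq j$, and the $Aa_0$-type of a tuple $(\tilde b_{i_1},\ldots,\tilde b_{i_n})$ depends on these uncontrolled choices. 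Indiscernibility of $(a_ib_i)_i$ over $A$ gives a single automorphism matching tuples of $(a_ib_i)$'s, but it does not interact well with the family $(\sigma_i)$, so you cannot conclude $\tp(\tilde b_{i_1}\ldots\tilde b_{i_n}/Aa_0)=\tp(\tilde b_{j_1}\ldots\tilde b_{j_n}/Aa_0)$. A Ramsey extraction would only get you a new sequence with better properties, not the conclusion you need about the original sequence, and in any case you would still have to show that the conclusion (indiscernibility of the enlarged $\tilde b$-sequence over $Aa_0$) transports back through the incoherent $\sigma_i$'s to give indiscernibility of the original enlarged $b$-sequence together with the $a$-sequence.

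The paper avoids this base-change problem entirely by working with the ``co-distal'' characterization of distality (Lemma~\ref{lem:co-distal}, which is \cite[Proposition~1.17]{distalValFields}, together with its converse Lemma~\ref{lem:distal co-distal}). That characterization says: $\pi$ is distal if and only if, for every \emph{external} indiscernible sequence $I+J$ and insertion $c$, if $I+J$ is indiscernible over $Ab$ (with $b\models\pi$) and $I+c+J$ is indiscernible over $A$, then $I+c+J$ is already indiscernible over $Ab$. In this formulation the base $Aab$ is fixed once and for all and $b$ is a single external tuple, so no transport is needed: one applies the lemma once for $\tp(a/A)$ to promote indiscernibility of $I+c+J$ from over $A$ to over $Aa$, and once for $\tp(b/Aa)$ to promote it further to over $Aab$, then invokes the converse lemma. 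If you want to keep your direct ``insertion into a sequence from $\tp(ab/A)$'' approach, you essentially have to first prove the co-distal lemma (or something equivalent) as an intermediate step, which is most of the work; as written, your proof has a hole where that lemma should be.
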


The main point of the proof of reduction to dimension 1 in \cite{distal} is the following property.

\begin{lemma}\label{lem:co-distal}
Assume that $\pi(x)$ is a partial type over some $A$ which is distal and let $I+J$ be any sequence in $M$. Let $a$ be a tuple from $M$ and $b$ a tuple of realizations of $\pi(M)$. Assume that $I+J$ is indiscernible over $Ab$ and $I+a+J$ is indiscernible over $A$. Then $I+a+J$ is indiscernible over $Ab$.
\end{lemma}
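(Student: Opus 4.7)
The plan is to reduce by induction on the length of $b$ to the case where $b$ is a single realization of $\pi$. This reduction is legitimate because distality of $\pi$, being a property of indiscernible sequences of realizations of $\pi$, is preserved under enlarging the parameter set: at each step of the induction, I peel off one realization of $\pi$ and absorb the rest into the base.

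For the singleton case, I first extend $I$ and $J$ if necessary so they are densely ordered without endpoints, which preserves both hypotheses. By a standard Ramsey-and-compactness extraction, produce an indiscernible sequence $(b_n)_{n \in \mathbb{Q}}$ of realizations of $\tp(b/A(I+a+J))$ that is indiscernible over $A(I+a+J)$, and by conjugation arrange $b_0 = b$. Since each $b_n$ realizes $\pi$, the sequence $(b_n)_{n \in \mathbb{Q}}$ is an indiscernible sequence in $\pi$ and hence distal by hypothesis on $\pi$. It then suffices to show that $I + a + J$ is indiscernible over the larger base $A(b_n)_n$, since specializing to $b_0 = b$ yields the desired conclusion.

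Assume for contradiction that some $A$-formula $\phi(\bar x; \bar y)$, tuples $\bar c_1, \bar c_2$ in $I + a + J$ of the same order-type, and a finite subtuple $\bar b$ of $(b_n)_n$ witness $\phi(\bar c_1; \bar b) \wedge \neg \phi(\bar c_2; \bar b)$. Because $\bar c_1, \bar c_2$ share an order-type in $I+a+J$, either both contain $a$ at the same position, or neither does; in both cases, the contradiction must come from a failure of indiscernibility for tuples drawn from $I + J$ (possibly with $a$ appended in matching position). The main obstacle, and the essential use of distality, is the final step: to derive the contradiction, I would split $(b_n)_n$ into three densely ordered parts, use the indiscernibility of $(b_n)_n$ over $A(I + a + J)$ to verify the marginal indiscernibility hypotheses required by the definition of distality, and invoke the resulting joint indiscernibility to force the two sides of $\phi$ to coincide, contradicting the assumed failure.
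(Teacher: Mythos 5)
The paper does not actually prove this lemma: it simply cites \cite[Proposition 1.17]{distalValFields} as a complete reference. So there is no internal proof to compare against; I can only assess the proposal on its own terms, and it has several genuine gaps.

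The high-level idea of extracting an indiscernible sequence $(b_n)$ of conjugates of $b$ inside $\pi$ and exploiting its distality is indeed the right direction, and it is the kind of move that appears in the cited reference. The problems are in the three concrete steps.

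First, the reduction to a singleton $b$ rests on the claim that distality of $\pi$ is preserved when the base is enlarged from $A$ to $A b'$ (for $b' \models \pi$). This is not immediate: an $A b'$-indiscernible sequence $B_1 + B_2 + B_3$ with $A b'$-indiscernible insertions $B_1+\alpha+B_2+B_3$ and $B_1+B_2+\beta+B_3$ is a fortiori $A$-indiscernible with $A$-indiscernible insertions, and distality over $A$ yields that $B_1+\alpha+B_2+\beta+B_3$ is $A$-indiscernible --- but you then need to upgrade this to $A b'$-indiscernibility, which is exactly an instance of the very lemma you are proving. So the reduction is not a free simplification; it has to be interleaved with the singleton case, and you should say so.

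Second, after extracting $(b_n)$ over $A(I+a+J)$ you assert that it suffices to show that $I+a+J$ is indiscernible over $A(b_n)_n$, and you implicitly treat $I+J$ as already indiscernible over $A(b_n)_n$ ("in both cases, the contradiction must come from a failure of indiscernibility for tuples drawn from $I+J$"). But indiscernibility of $(b_n)$ over $A(I+a+J)$ gives control of $\tp(\bar b/A\bar c)$ as $\bar b$ varies, not of $\tp(\bar c/A\bar b)$ as $\bar c$ varies; it does not yield that $I+J$, much less $I+a+J$, is indiscernible over $A(b_n)_n$. You know $I+J$ is indiscernible over $A b_0$ only. Getting mutual indiscernibility between $I+J$ and $(b_n)$ requires a different construction (an invariant/coheir-type sequence, say), and cannot be read off from a bare Ramsey extraction. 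You should also note that if $b\in\acl\left(A(I+a+J)\right)$ the extraction can only produce a constant sequence, for which the distality hypothesis is vacuous.

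Third, the distality argument itself is not made concrete. Distality of the sequence $(b_n)$ concerns inserting tuples into $(b_n)$: any $\alpha$ with $B_1+\alpha+B_2+B_3$ indiscernible must have the same type as the $b_n$'s, hence be a realization of $\pi$. The tuples $\bar c_1,\bar c_2$ from $I+a+J$ are not candidates for insertion, and you never say what $\alpha,\beta$ you would actually insert or why inserting them forces $\phi(\bar c_1;\bar b)\leftrightarrow\phi(\bar c_2;\bar b)$. The statement you appear to need --- that $B_1+B_2+\beta+B_3$ being indiscernible and $B_1+B_2+B_3$ being indiscernible over some external $c$ implies $B_1+B_2+\beta+B_3$ is indiscernible over $c$ --- is precisely the co-distal form of the lemma applied with the roles of the sequence and the parameter swapped, so invoking it here is circular. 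A correct proof has to extract the co-distal form from the raw definition of a distal sequence, and that extraction is the actual content; the proposal does not supply it.
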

\begin{proof}
This is the content of \cite{distal}, but does not appear explicitly there. It is sated explicitly along with a complete proof in \cite[Proposition 1.17]{distalValFields}.
\end{proof}

We state some form of converse.

\begin{lemma}\label{lem:distal co-distal}
Let $\pi(x)$ be a partial type over some $A$. Assume that for any $I+J$ be any sequence in $M$, tuple $a$ from $M$ and $b$ a realization of $\pi$, if $I+J$ is indiscernible over $A$ and $I+a+J$ is indiscernible. then $I+a+J$ is indiscernible over $Ab$. Then $\pi(x)$ is distal.
\end{lemma}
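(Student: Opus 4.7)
The plan is to show that every indiscernible sequence $I+J+K$ of realizations of $\pi$ (with $I,J,K$ densely ordered without endpoints) is distal. So I fix such a sequence together with tuples $a,b$ making $I+a+J+K$ and $I+J+b+K$ indiscernible over $A$, and aim to prove $I+a+J+b+K$ indiscernible over $A$.

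The first observation is that $b$ must itself be a realization of $\pi$: sitting inside the $A$-indiscernible sequence $I+J+b+K$ whose other elements realize $\pi$, the tuple $b$ shares their $1$-type over $A$ and so satisfies every formula of $\pi$. This is what licenses invoking the hypothesis of the lemma with $b$ as the distinguished element of $\pi$. Next, from the $A$-indiscernibility of $I+J+b+K$ I would derive that $I+J+K$ is indiscernible over $Ab$: any two increasing subtuples of $I+J+K$ of the same shape, augmented by $b$ in its designated slot, form two increasing subtuples of $I+J+b+K$ of the same shape, hence have the same type over $A$, which is equivalent to the originals having the same type over $Ab$.

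I would then apply the hypothesis of the lemma to the sequence $I+(J+K)$ split at the cut $I\,|\,J+K$, the inserted tuple $a$, and the realization $b\in\pi$: the two preconditions (that $I+J+K$ is indiscernible over $Ab$ and that $I+a+J+K$ is indiscernible over $A$) are now in place, so the hypothesis yields $I+a+J+K$ indiscernible over $Ab$.

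Finally I would conclude by a shape analysis on increasing subtuples $t$ of $I+a+J+b+K$. If $t$ avoids $b$, it lies in the $A$-indiscernible sequence $I+a+J+K$, so its $A$-type is determined by its shape. If $t$ contains $b$, write $t=t_0+(b)+t_1$ with $t_0\subseteq I+a+J$ and $t_1\subseteq K$; the stripped tuple $t_0+t_1$ is an increasing subtuple of $I+a+J+K$, whose type over $Ab$ is determined by its shape by the previous paragraph, yielding the desired conclusion for $t$ over $A$. The main conceptual content is the reduction of two-point distality to a single application of the base-change hypothesis at the cut $I\,|\,J+K$; the remaining moves — noting $b\models\pi$ and the shape argument — are formal, so I do not expect a serious obstacle once the right cut is chosen.
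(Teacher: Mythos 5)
Your step 2 is incorrect, and this leaves a genuine gap: you need $I{+}J{+}K$ to be indiscernible over $Ab$ in order to invoke the hypothesis at the cut $I\,|\,(J{+}K)$, and you claim to derive this from the $A$-indiscernibility of $I{+}J{+}b{+}K$. That inference fails, because $b$ occupies a fixed slot between $J$ and $K$, and naming it as a parameter lets one tell the two sides apart. For a concrete counterexample, take $M$ a dense linear order, the elements singletons, $A=\emptyset$, $I,J,K$ consecutive open intervals and $b$ the point separating $J$ from $K$: then $I{+}J{+}b{+}K$ is trivially indiscernible, yet for $j\in J$ and $k\in K$ we have $j<b<k$, so $\tp(j/b)\neq\tp(k/b)$ and $I{+}J{+}K$ is not indiscernible over $b$. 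Your argument, if one unwinds it, only shows that increasing tuples from $I{+}J{+}K$ of the same \emph{shape} (that is, cut by the $b$-slot at the same position) have the same $Ab$-type; indiscernibility requires this for all tuples of the same length, and that is exactly what fails.

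This is precisely the obstacle that the paper's concatenation trick is built to remove. Form $I'$, $J'$ and $a'$ by appending a fixed enumeration $\bar k$ of $K$ to each element of $I$, to each element of $J$, and to $a$. Since every element of $I'{+}J'$ now carries the same copy of $K$, the parameter $b$ sits in the same place relative to all of $K$ inside each element, and $I'{+}J'$ really is indiscernible over $Ab$: the tuples $(c_1,\dots,c_m,b,\bar k)$ with $c_i\in I{+}J$ are increasing subtuples of $I{+}J{+}b{+}K$ of a common length with $b$ always in the same slot. Applying the hypothesis at the cut $I'\,|\,J'$ then yields $I'{+}a'{+}J'$ indiscernible over $Ab$, which controls the joint behaviour of $a$, $b$ and $K$ and gives the conclusion. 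So the concatenation is not a cosmetic variant of your choice of cut; it is the device that makes the hypothesis applicable, and your proof needs it or an equivalent.
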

\begin{proof}
We show directly that an indiscernible sequence in $\pi(x)$ is distal. Let $I+J+K$ be such a sequence and let $a,b$ be given so that $I+a+J+K$ and $I+J+b+K$ are indiscernible. Let $I',J'$ be the sequences formed by concatenating a fixed enumeration of $K$ to each element of the sequences $I$ and $J$ respectively. Let also $a'$ be obtained from $a$ by concatenating the same enumeration of $K$. Then $I'+a'+J'$ is indiscernible over $A$. Applying the hypothesis, we get that $I'+a'+J'$ is indiscernible over $Ab$, hence $I+a+J+b+K$ is indiscernible as required.
\end{proof}

We can now give the proof of Fact \ref{fact:distal trans}. Assume that $\tp(a/A)$ and $\tp(b/Aa)$ are distal. Let $I+J$ be an indiscernible sequence from $M$ and $c\in M$ so that $I+c+J$ is indiscernible over $A$ and $I+J$ is indiscernible over $Aab$. As $\tp(a/A)$ is distal, we get by Lemma \ref{lem:co-distal} that $I+c+J$ is indiscernible over $Aa$. Then as $\tp(b/Aa)$ is distal, $I+c+J$ is indiscernible over $Aab$. By Lemma \ref{lem:distal co-distal}, $\tp(ab/A)$ is distal.

\begin{fact}\label{fact:distal acl}
If the definable set $D$ is distal and $D'$ is a definable subset of $\acl(D)$, then $D'$ is distal.
\end{fact}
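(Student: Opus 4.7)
The plan is to reduce the distality of $D'$ to that of $D$ via a bridge type $\tp(b\bar b/A)$, where $b\in D'$ and $\bar b\in D^n$ witnesses $b\in \acl(\bar b)$. Distality of this joint type will follow from transitivity (Fact \ref{fact:distal trans}), and distality of the $1$-type $\tp(b/A)$ will be extracted from it by a Ramsey--automorphism manoeuvre through the co-distality reformulation provided by Lemmas \ref{lem:co-distal} and \ref{lem:distal co-distal}.

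First, an induction on $n$ using Fact \ref{fact:distal trans} shows that $D^n$ is distal for every $n$: the case $n=1$ is the hypothesis, and in the inductive step one decomposes $\tp(d_1\ldots d_n/A)$ as $\tp(d_1\ldots d_{n-1}/A)$ followed by $\tp(d_n/A d_1\ldots d_{n-1})$, both distal. Now fix $b\in D'$, a base $A$, and $\bar b\in D^n$ with $b\in \acl(\bar b)$. Then $\tp(\bar b/A)$ is distal, and $\tp(b/A\bar b)$ is algebraic, hence trivially distal (any indiscernible sequence in an algebraic type is constant, and constant sequences are distal). Fact \ref{fact:distal trans} yields distality of the bridge type $\tp(b\bar b/A)$.

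To conclude that $D'$ is distal, I verify the co-distality hypothesis of Lemma \ref{lem:distal co-distal} for the partial type ``$x\in D'$''. So let $I+J$ be indiscernible over $Ab$, $a$ a tuple from $M$, $I+a+J$ indiscernible over $A$, with $b\in D'$; the goal is to show $I+a+J$ indiscernible over $Ab$. By a standard Ramsey extraction, find a sequence $I^*+J^*$ with the same EM-type over $Ab$ as $I+J$ that is in addition indiscernible over $Ab\bar b$. Since both $I+J$ and $I^*+J^*$ are $Ab$-indiscernible with the same EM-type over $Ab$, they realize the same type over $Ab$ in $\monster$, so an automorphism $\sigma\in \aut(\monster/Ab)$ sends $I^*+J^*$ to $I+J$; setting $\bar b':=\sigma(\bar b)$, one has $\tp(\bar b'/Ab)=\tp(\bar b/Ab)$, so in particular $b\in \acl(\bar b')$ and $\tp(b\bar b'/A)=\tp(b\bar b/A)$ is distal, while $I+J$ is now indiscernible over $Ab\bar b'$. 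Applying Lemma \ref{lem:co-distal} to this distal bridge type with realization $b\bar b'$ yields $I+a+J$ indiscernible over $Ab\bar b'$, a fortiori over $Ab$, as required.

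The main subtlety is the mismatch that forces the Ramsey--automorphism step: an arbitrary witness $\bar b$ need not lie in $\acl(AbIJ)$, so $I+J$ is not automatically indiscernible over $A\bar b$, and one cannot directly invoke Lemma \ref{lem:co-distal} on the distal bridge type $\tp(b\bar b/A)$. Replacing $\bar b$ by a conjugate $\bar b'$ over $Ab$ that is compatible with the indiscernibility of $I+J$ fixes the issue, since any such conjugate preserves both the algebraicity $b\in \acl(\bar b')$ and the bridge type $\tp(b\bar b'/A)=\tp(b\bar b/A)$.
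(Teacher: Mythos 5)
The paper does not give an argument here; it simply cites \cite[Corollary 1.26]{distalValFields}. Your proposal supplies a self-contained proof built on the machinery the paper has already set up (Facts~\ref{fact:distal trans}, Lemmas~\ref{lem:co-distal} and~\ref{lem:distal co-distal}), and it is correct. The decomposition into three moves is clean: $D^n$ is distal for all $n$ by iterating Fact~\ref{fact:distal trans}; the bridge type $\tp(b\bar b/A)$ is distal since $\tp(\bar b/A)$ is (as a subtype of the distal $D^n$) and $\tp(b/A\bar b)$ is algebraic hence vacuously distal; and one then verifies the co-distality criterion of Lemma~\ref{lem:distal co-distal} for $D'$. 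The genuine content is the Ramsey--automorphism step, and you correctly identify why it is needed: the witness $\bar b$ you start with has no reason to be compatible with $I+J$ being indiscernible over $A b\bar b$, so you cannot invoke Lemma~\ref{lem:co-distal} directly. Stretching $I+J$ (over $Ab$), extracting an $A b\bar b$-indiscernible subsequence with the same EM-type over $Ab$, and transporting by an automorphism over $Ab$ produces a conjugate $\bar b'$ with the two properties you need: $\tp(b\bar b'/A)=\tp(b\bar b/A)$ (hence distal, and $b\in\acl(\bar b')$) and $I+J$ indiscernible over $A b\bar b'$. Lemma~\ref{lem:co-distal} then gives indiscernibility of $I+a+J$ over $A b\bar b'$, hence over $Ab$. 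This is exactly the kind of argument one expects the cited corollary to encapsulate; presenting it here has the virtue of making the section self-contained. One small remark: the statement of Lemma~\ref{lem:distal co-distal} in the paper appears to contain a typo (the hypothesis should read ``$I+J$ indiscernible over $Ab$'' to mirror Lemma~\ref{lem:co-distal}), and you correctly read it that way.
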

\begin{proof}
See \cite[Corollary 1.26]{distalValFields}.
\end{proof}

\begin{fact}\label{fact:distal orth}
Let $\tp(a/A)$ be distal and $X$ an $A$-definable stable set. Then $\acl(Aa)\cap X = \acl(A)\cap X$.
\end{fact}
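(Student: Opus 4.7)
The plan is to argue by contradiction via Lemma \ref{lem:co-distal}. Assume there exists $c \in (\acl(Aa) \cap X) \setminus \acl(A)$ and set $p := \tp(c/A)$; since $p$ is non-algebraic and concentrated on the stable set $X$, it is stable. Using stability of $p$, I first build a long totally $A$-indiscernible sequence of realizations of $p$ containing $c$: set $c_0 := c$ and let each $c_i$ (for $i\ne 0$) realize the unique nonforking extension of $p$ over the previously chosen elements. The resulting Morley sequence $(c_i)_{i\in\mathbb Z}$ is totally $A$-indiscernible.

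Write $I_1 = (c_i)_{i<0}$ and $I_2 = (c_i)_{i>0}$, chosen arbitrarily long by compactness. By the standard Ramsey-and-compactness extraction, pass to infinite subsequences $\tilde I_1 \subseteq I_1$ and $\tilde I_2 \subseteq I_2$ such that $\tilde I_1 + \tilde I_2$ is indiscernible over $Aa$. Since subsequences of a totally $A$-indiscernible sequence remain totally $A$-indiscernible, reinserting $c_0 = c$ between them still gives an $A$-indiscernible sequence $\tilde I_1 + c + \tilde I_2$.

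Now I invoke Lemma \ref{lem:co-distal} with $\pi(x) = \tp(a/A)$, with $a$ itself as the realization of $\pi$, with $\tilde I_1 + \tilde I_2$ in the role of $I+J$, and with $c$ as the inserted tuple. Both hypotheses of the lemma hold by construction, and its conclusion is that $\tilde I_1 + c + \tilde I_2$ is indiscernible over $Aa$. In particular, $c$ and every element of $\tilde I_1 \cup \tilde I_2$ share the same type over $Aa$. But $\tp(c/Aa)$ is algebraic since $c\in\acl(Aa)$, while $\tilde I_1 \cup \tilde I_2$ supplies infinitely many distinct realizations of this type --- contradiction. The reverse inclusion $\acl(A)\cap X \subseteq \acl(Aa)\cap X$ is trivial.

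The one step requiring any care is the extraction: I need a subsequence that is $Aa$-indiscernible and still admits $c$ as an $A$-indiscernible interleave. This is where stability of $X$ is used, because total $A$-indiscernibility is inherited by every subsequence and so survives the deletion of $c$ and its subsequent reinsertion. The rest is a direct application of the distality characterization, which is exactly tailored to promote $A$-indiscernibility of $\tilde I_1 + c + \tilde I_2$ to indiscernibility over $Aa$.
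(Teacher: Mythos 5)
Your proof is correct, but it follows a genuinely different route from the paper's. The paper first invokes Fact~\ref{fact:distal acl} to reduce the statement to the assertion that a type which is simultaneously distal and stable must be algebraic, and then establishes this in one stroke: take a non-constant totally indiscernible sequence $I+a+J+K$ in the stable type; total indiscernibility gives $I+J+a+K$ indiscernible, and the definition of a distal sequence then forces $I+a+J+a+K$ to be indiscernible, which is absurd for a non-constant sequence with a repeated entry. You instead work directly with $\tp(a/A)$ and $c\in\acl(Aa)$, building a Morley sequence in $\tp(c/A)$ and then using Lemma~\ref{lem:co-distal} to promote $A$-indiscernibility of $\tilde I_1 + c + \tilde I_2$ to $Aa$-indiscernibility, reaching a contradiction with algebraicity of $\tp(c/Aa)$. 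Your approach bypasses Fact~\ref{fact:distal acl} entirely at the cost of an explicit extraction argument; the paper's reduction is slicker but relies on having already established the $\acl$-closure property of distality.

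One point in your argument deserves more care. You propose extracting $\tilde I_1 \subseteq I_1$ and $\tilde I_2 \subseteq I_2$ separately so that $\tilde I_1 + \tilde I_2$ is $Aa$-indiscernible while still flanking $c$. This split-respecting extraction is not obviously achievable; a priori the $Aa$-indiscernible subsequence produced by Ramsey could live entirely inside $I_1$, or the two halves could fail to cohere over $Aa$. The clean fix is exactly the mechanism you already gesture at: forget the positional split, extract a single infinite $Aa$-indiscernible subsequence $\tilde J$ from $I_1 + I_2$, and split it arbitrarily as $\tilde J_1 + \tilde J_2$. Total $A$-indiscernibility of the original Morley sequence then guarantees that any re-enumeration of a subset, in particular $\tilde J_1 + c + \tilde J_2$, remains $A$-indiscernible regardless of where $c$ was originally located. (Alternatively, one can simply take the $d_i$'s to realize the nonforking extension $p|_{Aa}$ of $p := \tp(c/A)$; by stability these form a sequence that is totally $Aa$-indiscernible, and independence of $c$ from the $d_i$'s over $A$ lets you insert $c$ while staying $A$-indiscernible, avoiding the extraction entirely.) With that adjustment the argument goes through.
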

\begin{proof}
Given Fact \ref{fact:distal acl}, it is enough to show that a type which is both distal and stable is algebraic. Indeed if $\pi(x)$ is stable and non-algebraic, let $I+a+J+K$ be an indiscernible non-constant sequence of realizations of $\pi$. Then since $\pi$ is stable, the sequence is totally indiscernible. Hence $I+J+a+K$ is also indiscernible. However $I+a+J+a+K$ is not indiscernible, so this contradicts distality.
\end{proof}

Finally, from \cite{Si}, we have (though the ``in particular'' part is obvious):

\begin{fact}[\cite{Si} Theorem 1.3]\label{fact:distal rank 1}
Any \th-rank 1 unstable set is distal. In particular a pure dense linear order is distal.
\end{fact}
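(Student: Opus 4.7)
The plan is to apply the structural description of unstable \th-rank 1 sets from Fact \ref{circular} to reduce to finite unions of minimal linear and circular orders, then verify distality using the sector description from Fact \ref{fact:all order types with parameters}, reducing everything to the combinatorics of dense linear orders.

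I would begin with the dense linear order case. In DLO the type of a tuple over parameters is determined by pairwise order relations on coordinates together with the cut each coordinate occupies, so an indiscernible sequence $I+J+K$ of tuples in $V^k$ has a fixed order type on its entries. The condition that $I+a+J+K$ be indiscernible forces each coordinate of $a$ into a specific location of the order (either equal to a fixed entry of some element of the sequence or lying in a specific cut determined by $I$ and $J$), and similarly for $b$ when $I+J+b+K$ is indiscernible. Density of $J$ then ensures $a$ and $b$ can simultaneously occupy their prescribed positions, so $I+a+J+b+K$ is indiscernible by quantifier elimination in DLO.

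For the main assertion, let $X$ be an unstable \th-rank 1 set. We may reduce to $X$ primitive, since distality transfers to subsets of the algebraic closure by Fact \ref{fact:distal acl} and, via Fact \ref{fact:distal trans} applied to quotients together with classes, lifts back to $X$. Fact \ref{circular} then yields an interpretable set $W$, a finite union of pairwise independent or order-reversingly bijective minimal linear and circular orders, admitting a finite-to-one map onto $X$. Hence $X \subseteq \acl(W)$, and by Fact \ref{fact:distal acl} it suffices to prove $W$ is distal. By Fact \ref{fact:all order types with parameters}, a definable subset of $W^k$ is an $A$-sector, that is, a finite union of products of sectors in each component order, so an indiscernible sequence in $W^k$ decomposes into coordinate groups, each living in a single minimal order, with distinct groups independent from each other. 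For each group the linear (or, after fixing a reference point, circular) order case of the previous paragraph applies, and iterated application of Fact \ref{fact:distal trans} combines the pieces into distality of $W$.

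The main obstacle is the careful handling of cuts over arbitrary base parameters and the passage from circular to linear orders via a chosen reference point; however, both are already encoded in the sector machinery, so after setting up notation the verification reduces to the density argument in DLO. The content of unstability and \th-rank 1 is confined to the invocation of Fact \ref{circular}, while everything else is a combinatorial consequence of the sector description.
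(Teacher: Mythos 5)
This statement is cited from \cite{Si} (Theorem 1.3) and carries no proof in the present paper, so there is no in-paper argument against which to compare; the intended reader takes it as known. Your reconstruction takes the route one would naturally guess from the surrounding material: cover the rank 1 set by a finite union of pairwise independent minimal linear and circular orders via Fact \ref{circular}, verify distality of that cover by reducing to the DLO base case through the sector description, and transport distality back through Fact \ref{fact:distal acl}. That outline is plausible, but a few steps are stated more tersely than the argument can bear.

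First, the circular case does not literally become DLO by ``fixing a reference point.'' The structure $(V,C)$ is a \emph{reduct} of $(V,C,a)$, and a $(V,C)$-indiscernible sequence need not be $(V,C,a)$-indiscernible; you must choose $a$ generically with respect to the given sequence $I+J+K$ and both witnesses $a,b$ in the distality condition so that all of the relevant indiscernibility statements survive the expansion. This works, but it is a step, not a remark. Second, the iterated application of Fact \ref{fact:distal trans} across the independent component orders hides the real content: you need each $\tp(a_i/A\,a_{<i})$, living in a single $W_i$, to be distal, and for that you need to know that the structure induced on $W_i$ over parameters from the other components is still essentially a dense order with finitely many distinguished cuts. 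Fact \ref{fact:all order types with parameters} does supply this, but it must be invoked to justify the inductive step rather than just to describe definable sets in passing. Third, the reduction to the primitive case (``quotients together with classes'') should record why it works on a rank 1 set: by additivity of the rank, any definable equivalence relation on a transitive rank 1 set has either finite classes or finitely many classes, so the primitivization is a two-step quotient and Fact \ref{fact:distal trans} applies to each step. Finally, one should check that the proof of Fact \ref{circular} in \cite{Si} does not itself rely on Theorem 1.3 of that paper, since otherwise the argument would be circular; I believe the classification is established independently, but this is worth confirming before leaning on it.
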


\section{Finiteness of the rank}\label{sec:finite rank}

For the rest of the paper, we will assume taht we are working in a finitely homogeneous dependent rosy structure, although some results hold in more generality. Let $\mathcal L$ be a finite relational language in which $M$ has quantifier elimination.

We start by showing that any rosy finitely homogeneous structure has finite \th-rank.

\begin{lemme}
Let $(a_i:i<\kappa+\kappa)$ be an indiscernible sequence in a rosy theory, where $\kappa \geq |T|^+$. Then $(a_i:\kappa \leq i <\kappa+\kappa)$ is a Morley sequence over $(a_i:i<\kappa)$.
\end{lemme}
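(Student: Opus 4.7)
The strategy is to use the local \th-ranks and their stabilization along the sequence. Set $A := \{a_i : i < \kappa\}$ and $I := (a_i)_{\kappa \leq i < \kappa + \kappa}$. A Morley sequence over $A$ is an $A$-indiscernible sequence in which each term is \th-independent from the previous ones over $A$, so both conditions must be verified.

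The $A$-indiscernibility of $I$ is immediate from the original indiscernibility: any parameter tuple from $A$ has all its indices below $\kappa$, while any increasing tuple from $I$ has indices $\geq \kappa$. Hence the order type of the concatenation is independent of which increasing tuple we pick from $I$, and $\emptyset$-indiscernibility of $(a_i)$ supplies the same type over the parameters. For the \th-independence, I would fix arbitrary finite $\Delta, \Pi$ and $k < \omega$ and consider $r_n := \thr(\tp(a_n/a_{<n}), \Delta, \Pi, k)$ for $n < \kappa+\kappa$. By monotonicity of local \th-rank in the base, $n \mapsto r_n$ is non-increasing; since in a rosy theory these ranks take values in $\omega \cup \{\infty\}$, once the sequence has entered $\omega$ it can only decrease finitely often, so it stabilizes within $\omega$ steps at some $r^* \in \omega \cup \{\infty\}$. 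The hypothesis $\kappa \geq |T|^+$ places $\kappa$ comfortably past the stabilization point, so for every $i \geq \kappa$ we get $r_i = r_\kappa = r^*$.

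Combining the two facts, for every $i \geq \kappa$ and every finite $(\Delta, \Pi, k)$: on one side, $\thr(\tp(a_i/a_{<i}), \Delta, \Pi, k) = r_i = r^*$; on the other side, since $a_{<\kappa} = A$ and, by $A$-indiscernibility of $I$, $\tp(a_i/A) = \tp(a_\kappa/A)$, we get $\thr(\tp(a_i/A), \Delta, \Pi, k) = r_\kappa = r^*$. Applying the local-rank characterization of \th-independence in rosy theories recalled in the preliminaries then yields $a_i \thind_A a_{[\kappa,i)}$, which together with the $A$-indiscernibility shows that $I$ is a Morley sequence over $A$.

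The only subtlety to watch is that the characterization of \th-independence by local ranks is stated for ranks that are ordinal-valued; but the argument produces the same value $r^*$ on both sides of every relevant inequality, so if $r^*$ is finite the characterization applies directly, while if $r^* = \infty$ that particular $(\Delta, \Pi, k)$ simply contributes no constraint. This is really where rosiness does its work: it guarantees that enough local ranks are finite to detect \th-forking, and simultaneously that those finite ranks are natural numbers so that stabilization happens within $\omega$ steps.
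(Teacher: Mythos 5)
Your proof is correct and takes a genuinely different route from the paper's. The paper argues by contradiction: assuming the tail fails to be a Morley sequence, it extracts a \th-forking formula with hidden parameters in a proper initial segment $a_{<\alpha}$ and then slides that formula along the sequence by indiscernibility, producing a chain of $\kappa$-many \th-forking extensions of a single type, which contradicts rosiness (no forking chains of length $|T|^+$). You instead work directly with the local \th-ranks: $r_n = \thr(\tp(a_n/a_{<n}), \Delta,\Pi,k)$ is non-increasing and, in a rosy theory, finite, so it stabilizes within $\omega$ steps; once stabilized, $\thr(\tp(a_i/a_{<i}),\Delta,\Pi,k) = \thr(\tp(a_i/A),\Delta,\Pi,k)$ for all tail indices $i$ and all triples, and the local-rank characterization of \th-independence does the rest. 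Two small remarks. First, the non-increase of $r_n$ is not "monotonicity" alone: you also need indiscernibility to identify $\tp(a_n/a_{<n})$ with $\tp(a_{n+1}/a_1\cdots a_n)$ before base-monotonicity can be applied; you clearly intend this but the phrasing hides it. Second, your caveat about $r^*=\infty$ is actually vacuous here: rosiness is equivalent to all local \th-ranks of complete types being finite, which is precisely what makes these ranks code \th-forking, so the case never arises. On the plus side, your argument handles every tail index uniformly in one pass (the paper as written only treats $a_{\kappa+1}$ over $a_{\kappa}$ and leaves the general case implicit), and it makes visible that $\kappa\geq\omega$ would already suffice, whereas the paper's "no long forking chains" argument genuinely uses $\kappa\geq|T|^+$.
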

\begin{proof}
Assume that say $a_{\kappa+1} \nthind_{a_{<\kappa}} a_{\kappa}$ and let $\phi(x;a_{\kappa})$ be a formula with hidden parameters from $a_{<\kappa}$ that \th-forks over $a_{<\kappa}$. Assume that the hidden parameters of $\phi$ are in $a_{<\alpha}$ for $\alpha<\kappa$. Then for any $\alpha\leq \beta<\kappa$, the formula $\phi(x;a_{\beta})$ \th-forks over $a_{<\beta}$. But all those formulas are satisfied by $a_{\kappa}$. This gives a sequence of \th-forking extensions of a type of length $\kappa$ and this contradicts rosyness.
\end{proof}

\begin{prop}\label{prop:finite rank homogeneous}
If $M$ is finitely homogeneous and rosy, then it has finite $\thr$-rank.
\end{prop}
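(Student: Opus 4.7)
The plan is to argue by contradiction, assuming $\thr(M)\geq\omega$ and deriving a $\thr$-forking chain of length $|T|^+$, which contradicts rosiness in exactly the same way as in the preceding lemma.

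First, I would unpack the hypothesis of infinite $\thr$-rank. For every $n$ there is a chain of formulas $\phi_n(x;c_n)\Rightarrow\phi_{n-1}(x;c_{n-1})\Rightarrow\cdots\Rightarrow\phi_0(x;c_0)$ with each $\phi_{i+1}$ $\thr$-forking over $c_i$. Passing to a strong-dividing witness of each $\thr$-forking step yields a formula $\psi_i(x;b_i)$ strongly dividing over some $B_i\supseteq c_i$ with $k_i$-inconsistency constant. Here finite homogeneity enters decisively: since $\mathcal{L}$ is finite, at each bounded arity there are only finitely many $\mathcal{L}$-formulas up to equivalence, so iterated pigeonhole along chains of unbounded length allows us to fix a single formula $\phi(x;y)$, a single witness $\psi(x;z)$, a single integer $k$, and a uniform packaging of the base $B_i$. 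Compactness in a saturated model then yields an infinite such chain $(c_i,b_i,B_i)_{i<\omega}$ together with a common realization $a\models\phi(x;c_i)$ for all $i$.

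Second, applying Ramsey's theorem over the base $\{a\}$ in this $\omega$-categorical setting and stretching in a sufficiently saturated model, I obtain an indiscernible sequence $(\tilde c_i,\tilde b_i,\tilde B_i)_{i<\kappa+\kappa}$ over $\{a\}$ of length $\kappa+\kappa$ with $\kappa=|T|^+=\aleph_1$. Since strong dividing with fixed $\psi$, $k$, and packaged base is a first-order property of the configuration, this dividing behavior is preserved along the stretched sequence.

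Third, I would mimic the argument of the preceding lemma. The strong dividing of $\psi(x;\tilde b_{\kappa+1})$ over $\tilde B_{\kappa+1}$ provides a witness formula for $\thr$-forking with hidden parameters in the initial segment of the indiscernible sequence. Indiscernibility then transfers this $\thr$-forking step to every index $\beta<\kappa$, producing a chain of $\thr$-forking extensions of a single type of length $\kappa$. This contradicts rosiness exactly as in the preceding lemma.

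\textbf{The main obstacle} lies in the uniform packaging in the first two steps: one must arrange that the strong-dividing base $\tilde B_{i+1}$ at each level sits inside the earlier segment of the indiscernible sequence, so that indiscernibility can propagate the $\thr$-forking step to all earlier indices and feed it into the proof template of the preceding lemma. This is precisely where finite homogeneity is indispensable, since the bounded pool of atomic formulas and configurations is what makes the iterated pigeonhole work; without it, the \th-forking witnesses can escape any fixed list of formulas, and the indiscernible extraction can destroy the dividing data needed to trigger the contradiction.
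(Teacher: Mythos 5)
The proposal takes a genuinely different route from the paper, and the gap you flag yourself — the ``uniform packaging'' — is not a technicality to be finessed but the missing idea that the paper's proof is designed to avoid. You want to upgrade ``a \th-forking chain of every finite length'' to ``a single chain of length $|T|^+$'' via pigeonhole, compactness and Ramsey, then run the proof of the preceding lemma. But the strong-dividing bases $B_i$ and parameter tuples $c_i$ along a forking chain have no a priori bound on their length as the chain grows: finite homogeneity bounds the number of formulas at each \emph{fixed} arity, whereas the arity of the dividing configuration is unbounded along the chain, so there is no finite pool against which to run the iterated pigeonhole. Without a uniform first-order description of the dividing data (same $\psi$, same $k$, same isolating formula for $\tp(b_i/B_i)$), Ramsey extraction and stretching need not preserve the \th-forking, and the chain of length $|T|^+$ does not materialize. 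Producing an uncountable forking chain from ``rank $\geq\omega$'' is essentially what the whole proposition is about, so a proof that assumes this step is available begs the question.

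The paper sidesteps the upgrade entirely. It proves the quantitative bound $\thr(M)\leq n$ directly, where $n$ is the number of $\emptyset$-types of $\aleph_1$-indexed indiscernible sequences of singletons; this is finite because such a type is determined by the type of a single increasing subtuple of length equal to the arity of $\mathcal L$, a place where finite homogeneity really does supply a finite pool. Given a \th-forking chain $p_0\subseteq\cdots\subseteq p_n$ of length $n+1$, it forms Morley sequences $\bar a_k$ of length $\aleph_1+\aleph_1$ in each $p_k$, uses pigeonhole to find $k<l$ with $\bar a_k\equiv_\emptyset \bar a_l$, and then computes with the local ranks $\thr(\cdot,\Delta,\Pi,k)$ to show that the extension $p_k\subseteq p_l$ is non-\th-forking, which contradicts the chain. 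The only pigeonhole is on $\emptyset$-types of indiscernible sequences of \emph{singletons}, bounded by the language and not by the unboundedly complex forking witnesses; the preceding lemma is used to supply the non-forking of $p_l$ over the Morley sequence, not to convert a finite chain into an uncountable one.
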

\begin{proof}
Let $\kappa =\aleph_1$. The hypothesis of finite homogeneity has the following consequence: there are only finitely many types over $\emptyset$ of indiscernible sequence $(a_i:i<\kappa)$ of singletons. Let $n$ be that number of types of indiscernible sequences. We show that $\thr(M)\leq n$. Assume for a contradiction that we can find an increasing sequence of sets $(A_i:i<n+1)$ and a type $p\in S_1(A_n)$ such that for every $k<n+1$, $p_k := p|_{A_k}$ \th-forks over $A_{k-1}$ (where we set $A_{-1}=\emptyset$). For each $k$, let $\bar a_k = (a_{k,i}:i<\kappa+\kappa)$ be a Morley sequence in $p_k$ over $A_n$. By assumption, there are $k<l<n+1$ such that the sequences $\bar a_k$ and $\bar a_l$ have the same type over $\emptyset$. Let $\sigma$ be an automorphism of $M$ sending $\bar a_k$ to $\bar a_l$. Then $p_l$ and $\sigma(p_k)$ are both non-\th-forking over $\bar a_l$ and have the same restriction to it.

Recall from \cite{On} that there are local ranks $\thr(p,\Delta,\Pi,k)$ defined for any type $p$, integer $k$ and finite sets of formulas $\Delta$ and $\Pi$ with the property that a type $p\in S(B)$ \th-forks over $A\subseteq B$ if and only if for some $\Delta,\Pi,k$ we have $\thr(p,\Delta,\Pi,k)<\thr(p|_A,\Delta,\Pi,k)$. We for any $\Delta,\Pi,k$ we have \[\thr(p_k,\Delta,\Pi,k)=\thr(\sigma(p_k),\Delta,\Pi,k) = \thr(\sigma(p_k)|_{\bar a_l},\Delta,\Pi,k) = \thr(p_l|_{\bar a_l}, \Delta,\Pi,k)= \thr(p_l,\Delta,\Pi,k).\] Hence the extension $p_k \subseteq p_l$ is not \th-forking; contradiction.
\end{proof}

\subsection{Coordinatization}

We will now prove that every finitely homogeneous rosy dependent
 structure is coordinatized by a \th-rank 1 $\emptyset$-definable set.

\begin{theorem}\label{th:coord}
Let $M$ be a dependent, finitely homogeneous, rosy structure. Then $M$ is coordinatized by a \th-rank 1 formula.
\end{theorem}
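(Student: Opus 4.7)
The plan is to induct on $\thr(M)$, which is finite by Proposition \ref{prop:finite rank homogeneous}. Let $X \subseteq M^{eq}$ be the union of all $\emptyset$-definable complete $1$-types over $\emptyset$ of \th-rank~$1$ in imaginary sorts; by $\omega$-categoricity this is a finite union, so $X$ is $\emptyset$-definable, and provided it is non-empty it has \th-rank~$1$. It then suffices to show that every $a \in M$ satisfies $\acl^{eq}(a) \cap X \neq \emptyset$.

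The proof reduces to the following key step: for every $a \in M$ with $\thr(a) > 0$, the algebraic closure $\acl^{eq}(a)$ contains an element of \th-rank~$1$. One can organize this either by induction on $\thr(a)$---producing at each stage some $c \in \acl^{eq}(a) \setminus \acl(\emptyset)$ with $\thr(c) < \thr(a)$ and iterating until the rank drops to $1$---or directly by looking for a rank~$1$ imaginary non-independent from $a$. The Lascar inequality (Fact \ref{Lascar}) plays a crucial simplifying role: if $b$ has \th-rank~$1$ and $a \nthind b$, then computing $\thr(ab)$ in two ways forces $\thr(b/a) = 0$, that is $b \in \acl^{eq}(a)$, and moreover $\thr(a/b) = \thr(a) - 1$. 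So we only need to locate a rank~$1$ imaginary $b$ with $a \nthind b$.

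To find $b$, we exploit the classification of primitive \th-rank~$1$ sets: in the stable case these are indiscernible sets (using finite homogeneity via \cite{Mac} or \cite{Lac84} to rule out the other strictly minimal cases), and in the unstable case they are finite unions of minimal linear or circular orders (Fact \ref{circular}). Fact \ref{fact:gluing} packages all rank~$1$ sets, up to non-orthogonality, into a $\emptyset$-definable family $\{W_e\}_{e \in F}$. Lemma \ref{emptyset definable} is then applied to force the parameter $e$ of any $W_e$ intersecting $\acl^{eq}(a)$ non-trivially (with $a \thind e$) to be $\emptyset$-algebraic, so that the resulting element of $W_e$ ends up in a $\emptyset$-definable rank~$1$ set. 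The triviality of algebraic closure in rank~$1$ sets (Proposition \ref{6.1}) is used to clean up the parameters and extract the required imaginary from $\acl^{eq}(a)$.

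The main obstacle is the canonical-base step---producing the rank~$1$ witness $b \in \acl^{eq}(a) \setminus \acl(\emptyset)$. In the stable setting of \cite{CHL} this is routine because Morley rank supplies canonical bases, but in our rosy NIP setting the \th-rank alone does not, and we must substitute structure-specific arguments from the classification of \th-rank~$1$ sets. Fact \ref{fact:gluing} and Lemma \ref{emptyset definable} are precisely the $\emptyset$-definability tools engineered for this purpose, and their combined deployment is what makes the induction go through and delivers the coordinatizing \th-rank~$1$ formula $X$.
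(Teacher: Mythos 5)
Your outline correctly identifies the high-level strategy (induction on $\thr(M)$, the role of Lascar's inequality to downgrade $a\nthind b$ with $\thr(b)=1$ to $b\in\acl^{eq}(a)$, the classification of primitive rank-$1$ sets, and the $\emptyset$-definability package of Fact~\ref{fact:gluing} / Lemma~\ref{emptyset definable} / Proposition~\ref{6.1}). But there is a real gap at precisely the point you yourself flag as the obstacle: you never actually produce the rank-$1$ imaginary candidate, and the tools you name do not by themselves produce it.

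The paper's engine is \th-forking and strong dividing, and this is what is absent from your sketch. Concretely: since $\thr(M)=n>0$, there is by definition a formula $\phi(x,d)$ of \th-rank $n-1$ which \th-forks over $\emptyset$; after passing to a \th-dividing disjunct one finds a parameter $a$ over which $\phi(x,d)$ \emph{strongly divides}, which delivers the crucial algebraicity $d\in\acl(ab)\setminus\acl(a)$ for any realization $b$ of $\phi(x,d)$. One then enlarges $a$ so that $\tp(d/a)$ has \th-rank $1$ (while keeping $b\thind_d a$), and Lascar's inequality gives $\thr(b/a)=n$, hence $b\thind a$. It is only at this point that the family $\{\tp(d'/a'):a'\models\tp(a)\}$ of rank-$1$ sets exists and can be fed into Fact~\ref{fact:gluing}; the gluing, the $\emptyset$-definability Lemma~\ref{emptyset definable}, and triviality then extract the image $\beta\in\acl(b)$ with $\thr(\beta)=1$. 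Without the strong-dividing step there is no rank-$1$ set attached to $a$ on which to run the gluing machinery, and no element of $\acl(ab)\setminus\acl(a)$ from which to extract the coordinate. Saying that Fact~\ref{fact:gluing} and Lemma~\ref{emptyset definable} ``force the parameter to be $\emptyset$-algebraic'' is correct as a description of their role, but it presupposes the rank-$1$ germ $d$ that your sketch never constructs. So as written the induction does not close.
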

\begin{proof}
Let $M$ be finitely homogeneous, dependent and rosy. By Proposition \ref{prop:finite rank homogeneous} it has finite \th-rank,
equal to some $n$. Without loss of generality we may assume that $M$ is transitive (has a unique 1-type over $\emptyset$).

We will prove Theorem \ref{th:coord} by induction on $n=\thr(M)$. To this end, assume that for any transitive $A$-definable subset $X$ of \th-rank less than $n$ there is
a \th-rank one transitive $A$-definable set $Y$ such that for any $b\in X$ we have $\acl(bA)\cap Y\neq \emptyset$.

In particular, it is enough to prove that $M$ is coordinatized by a
formula of \th-rank $<n$.

By definition of \th-rank, there is some definable set $\phi(x,d)$
of \th-rank $n-1$, which therefore \th-forks over $\emptyset$. By
definition of \th-forking (replacing $\phi(x,d)$ by a \th-rank $n-1$ set in the
disjunction of \th-dividing formulas it implies), we may assume that it
\th-divides over $\emptyset$.

By definition of \th-dividing there is some
$a$ such that $\phi(x,d)$ strongly divides over $a$, that is, $\{\phi(x,d')\}_{d'\models \tp(d/a)}$ is $k$-inconsistent for some $k$. This depends only on $\phi(x,d)$ and $\tp(d/a)$, so we may choose $a$ with $b\thind_d a$.

Recall also that $\phi(x,d)$ strong divides over $a$ if and only if
\begin{equation}\label{str dividing}
d\in \acl(ba)\setminus \acl(a)\end{equation} for any realization $b$ of $\phi(x,d)$.

Now, we can increase $a$ so that $\tp(d/a)$ has \th-rank 1 (and still $b\thind_d a$).
We therefore have $d\in \acl(ab)\setminus \acl(a)$, $\tp(b/ad)$ of \th-rank $n-1$ and $\tp(d/a)$ of \th-rank 1.
It follows by Lascar's Inequality (Fact \ref{Lascar}) that $\thr(b/a)=n$ which implies $b\thind a$.

\medskip

We now divide our construction into two cases.

\smallskip
\noindent \textbf{\underline{Case 1}}: The definable set
$\tp(d/a)$ is unstable.

\smallskip

Let $X_a:=\tp(d/a)$. By Fact \ref{Theorem 5.13}, up to increasing $a$, there is an equivalence relation $E_{a}$ on $X_a$ with finitely many classes and an $a$-definable order $\leq_{a}$ on $V_{a}:=X_a/E_{a}$. The set $X_a$ is transitive over $a$, so $(V_{a}, \leq_{a})$ is minimal.

Let $\pi:X_a\to V_a$ be the canonical projection. By hypothesis $\pi(d)\in \acl(ba)$ and since $V_a$ is linearly
ordered, $\pi(d)\in \dcl(ba)$.

Let $p(x; d,a)=\tp(b/da)$.

Applying Fact \ref{fact:gluing} to the family $\{(V_{a'}, \leq_{a'})\}_{a'\models \tp(a/\emptyset)}$, let $(W_{e'}:e'\in F)$
be the corresponding family of orders, pairwise independent or in order-reversing bijection. Let $e\in F$ be such that $V_{a}$ embeds
densely in a convex subset of $W_{e}$. Let $\beta$ be the image of $\pi(d)$ under this embedding.



Now $\beta\in \acl(abe)$ and $\beta\not \in \acl(ae)$ ($X_a$ is transitive and
the image of $V_a$ in $W_e$ is infinite). We have $b\thind a$ and $e\in \acl(a)$ so $b\thind_e a$ and by Lemma \ref{emptysetdefinable}, $\beta\in \acl(eb)$.

By Fact \ref{prop:family of orders} $e\in \acl(b)$ which by \th-independence implies $e\in \acl(\emptyset)$, and $\beta \in \acl(eb)=\acl(b)$. This implies that
\[\thr(b/\beta)\geq \thr(b/dae \beta) =\thr(b/da)=n-1, \] and
$\thr(\beta)\geq 1$.

We then have \[ n=\thr(b,\beta)=\thr(b/\beta)+\thr(\beta) \geq n-1
+1,\] hence $\thr(b/\beta)=n-1$, $\thr(\beta)=1$ and $\tp(\beta)$
is a coordinatizing set of \th-rank 1.

\bigskip

\noindent \textbf{\underline{Case 2}}: The definable set
$\tp(d/a)$ is stable.

\smallskip

Let $\theta(y,a)= \tp(d/a)$. By Observation \ref{U=Uth}, $\theta(y,a)$ has finite Morley rank. By restricting
the formula and taking a quotient by an equivalence relation with finite classes (replacing $\phi(x,d)$ with a finite cover), we can assume that it is strictly minimal. By finite homogeneity, we can increase $a$ further (adding a constant from $\theta(y,a)$ if needed) to have $\theta(y,a)$ an indiscernible set.

We can now apply Fact \ref{6.4} to $\{\theta(y,a')\}_{a'\models \tp(a/\emptyset)}$. Let $\{W_{e'}\}_{e'\in F}$ be the family of strongly minimal orthogonal sets, let $e,\delta$ be such that $e\in \acl(a)$ and $d$ maps to $\delta$ via the injection from $\theta(y,a)$ into $W_e$.

\begin{claim}
$\delta\in \acl(be)$.
\end{claim}

\begin{claimproof}
Since $a\thind b$ and $e\in \acl(a)$ we have $b\thind_e a$. By construction $\delta\not\in \acl(dae)\setminus \acl(ae)$, and since $d\in \acl(ba)$ we have $\delta\not\in \acl(bae)\setminus \acl(ae)$. By Proposition \ref{6.1} $\delta\in \acl(be)$.
\end{claimproof}

\begin{claim}
$e\in \acl(\emptyset)$.
\end{claim}

\begin{claimproof}
Suppose otherwise, fix some integer $N>n=\thr(b)$ and let $\bar e:=( e_i)_{i\leq N}$ by a \th-Morley sequence of $\tp(e/b)$. Since $\delta_i\in\acl(be_i)$ and $b\thind_{e_i} \bar e$ we have $\delta_i\thind_{e_i} \bar e$ so
that $\delta_i$ is a generic element in $W_{e_i}/\bar e$.

Since $\{W_{e_i}\}_{i\leq N}$ are orthogonal by hypothesis, $\delta_{j+1}\not \in \acl(\delta_1\dots\delta_{j}\bar e)$ so that $\thr(\delta_1\dots\delta_{N}/\bar e)\geq N$. It follows that $\thr(b/\bar e)\geq N>n$, a contradiction.
\end{claimproof}

This implies that $\tp(b)$ is coordinatized by $\tp(\delta/\acl(\emptyset))$, as required.
\end{proof}

\begin{corollary}\label{cor:coordinatization}
Let $M$ be a dependent, finitely homogeneous rosy structure. Then for any $b\in M$ there is a sequence $\{\beta_0, \beta_1, \beta_2, \dots, \beta_n\}$ such that
\begin{itemize}
\item $\beta_i\in \acl(b), \beta_0=\emptyset$ and $b\in \acl(\beta_n)$.
\item Either $\tp(\beta_{i+1}/\beta_i)$ admits a $\beta_i$-definable minimal order or is indiscernible.
\end{itemize}
\end{corollary}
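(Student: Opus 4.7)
The plan is to build the sequence inductively by iterating the coordinatization theorem relative to the already-constructed parameters. Set $\beta_0 = \emptyset$. Suppose we have constructed $\beta_0, \ldots, \beta_i \in \acl(b)$ with $b \notin \acl(\beta_i)$. I would then apply Theorem \ref{th:coord} in its localized form to the type $\tp(b/\beta_i)$: consider the structure $M$ expanded by constants for (a finite tuple enumerating) $\beta_i$. This expansion is still finitely homogeneous (in $\mathcal L$ enlarged by finitely many constants), dependent, and rosy, and $\tp(b/\beta_i)$ is a transitive $\beta_i$-definable set of \th-rank $\thr(b) - \thr(\beta_i/\beta_{i-1}) - \cdots < \thr(b)$ by Fact \ref{Lascar}. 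So Theorem \ref{th:coord} (or, equivalently, the induction hypothesis invoked inside its proof, which is exactly the relative coordinatization statement for transitive sets of smaller rank) produces a $\beta_i$-definable \th-rank 1 set $Y_{i+1}$ and an element $\beta_{i+1} \in Y_{i+1} \cap \acl(b\beta_i) = \acl(b)$.

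It remains to verify that $\tp(\beta_{i+1}/\beta_i)$ has the required form. Inspecting the proof of Theorem \ref{th:coord}, the coordinatizing \th-rank 1 set produced there is \emph{explicitly} of one of two types: in Case 1 (where the auxiliary type $\tp(d/a)$ is unstable), it is a minimal linear or circular order obtained by applying Facts \ref{Theorem 5.13} and \ref{fact:gluing}; in Case 2 (where it is stable), it is an indiscernible set obtained from Fact \ref{6.4}. Taking $Y_{i+1}$ of this form, the type $\tp(\beta_{i+1}/\beta_i)$, being the generic type of the transitive $\beta_i$-definable set $Y_{i+1}$, either admits a $\beta_i$-definable minimal order or is indiscernible, as required.

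For termination, observe that by Fact \ref{Lascar},
\[ \thr(b/\beta_{i+1}) = \thr(b/\beta_i) - \thr(\beta_{i+1}/\beta_i) = \thr(b/\beta_i) - 1, \]
since $\tp(\beta_{i+1}/\beta_i)$ has \th-rank $1$. By Proposition \ref{prop:finite rank homogeneous} the rank $\thr(b)$ is finite, so after at most $n = \thr(b)$ steps we reach $\thr(b/\beta_n) = 0$, i.e.\ $b \in \acl(\beta_n)$, and the construction halts.

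The only mild obstacle is the justification of the relative/localized coordinatization at each step, but this is essentially free: the inductive hypothesis inside the proof of Theorem \ref{th:coord} already takes exactly the form we need (coordinatization of any transitive $A$-definable subset of smaller rank by an $A$-definable \th-rank 1 set), and in any case adding finitely many parameters as constants preserves finite homogeneity, dependence and rosyness.
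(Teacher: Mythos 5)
Your proof is correct and follows essentially the same inductive skeleton as the paper's: apply coordinatization to $\tp(b/\beta_i)$, pick $\beta_{i+1}$ in the resulting rank-$1$ set lying in $\acl(b)$, and terminate via Fact \ref{Lascar} and finiteness of rank (Proposition \ref{prop:finite rank homogeneous}). Where you diverge is in how you justify that the rank-$1$ coordinatizing set has the required ordered or indiscernible structure. The paper treats Theorem \ref{th:coord} as a black box, obtaining an abstract rank-$1$ set $Y$ and then post-processing it: if $Y$ is unstable it invokes Fact \ref{circular} (the classification of primitive rank-$1$ unstable dependent sets via minimal orders) to get an interalgebraic $\beta_{i+1}$ in an ordered set, and if $Y$ is stable it passes to a strictly minimal set, which is indiscernible by finite homogeneity. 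You instead open up the proof of Theorem \ref{th:coord} and observe that in both its Case 1 and Case 2 the coordinatizing element is explicitly found sitting inside a set $W_e$ that is respectively a minimal order or an indiscernible set. Both routes are legitimate; yours avoids the extra appeal to Fact \ref{circular} but depends on the internal bookkeeping of Theorem \ref{th:coord}'s proof, while the paper's version is more modular and would survive a rewriting of that proof. One small imprecision worth noting in your write-up: the coordinatizing set is not literally a minimal order or an indiscernible set but rather a complete type of elements inside one (over $\acl(\beta_i)$), with the auxiliary parameter $e$ algebraic over $\beta_i$; since the conclusion only asks that $\tp(\beta_{i+1}/\beta_i)$ \emph{admit} a definable order or \emph{be} indiscernible, and both properties pass from $W_e$ to its generic type, this is harmless — but you should absorb the finitely many conjugates of $e$ into the parameter set when you state the definability precisely, exactly as the paper implicitly does when it adds $\beta_1$ to the language for the inductive step.
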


\begin{proof}
The proof is by induction on the \th-rank of $\tp(b/\emptyset)$.

By Theorem \ref{th:coord}, there is some \th-rank one set $Y$ and some $d\in Y$ in $\acl(b)$. If $Y$ is unstable, then by Fact \ref{circular} there is some order $V$ and some $\beta_1\in V$ interalgebraic
with $d$. On the other hand, if $Y$ is stable, it has Morley Rank one and there is some strictly minimal $X$ and some $\beta_1\in X$ interalgebraic with $d$. In either case, $\beta_1$ is set.

For the induction step, consider $\tp(b/\beta_1)$. It has \th-rank less than $\thr(b/\emptyset)$, so adding $\beta_1$ to the language, by induction hypotesis there are $\{\beta_1', \dots, \beta_l'\}$ with
$\beta_i'\in \acl(b)$ and $b\in \acl(\beta'_n\beta_1)$ and $\tp(\beta_{i+1}'/\beta_i)'$ admits a $\beta_i'\beta_1$-definable minimal order or it is stable and strictly minimal. If we define for $i>1$  $\beta_{i+1}=\beta_i'\beta_1$, the sequence $\{\beta_0, \beta_1, \beta_2, \dots, \beta_l+1\}$ satisfies the conditions of the corollary.
\end{proof}

\begin{corollary}\label{cor:two-types}
	If $\rk(M)$ is finite, greater than 1, then $M$ has at least two 2-types of distinct elements.
\end{corollary}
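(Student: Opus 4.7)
The plan is to prove the contrapositive: assume $M$ has a unique 2-type of distinct elements, and show $\rk(M)\le 1$. If $M$ has more than one 1-type over $\emptyset$ then pairs drawn from distinct 1-types realize a second 2-type, so we may assume $M$ is transitive.

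By Theorem~\ref{th:coord} there is a $\emptyset$-definable $\thr$-rank $1$ formula $Y$ coordinatizing $M$. Inspecting the proof of that theorem, the witness $\beta$ (resp.\ $\delta$) produced in $\acl(b)\cap Y$ has $\thr$-rank exactly $1$, hence is not $\emptyset$-algebraic; so after replacing $Y$ by $Y\setminus\acl(\emptyset)$ (still $\emptyset$-definable of $\thr$-rank $1$) we may assume $Y\cap\acl(\emptyset)=\emptyset$. Then $f(b):=\acl(b)\cap Y$ gives a $\emptyset$-definable map from $M$ into finite subsets of $Y$, with $|f(b)|=k$ constant by transitivity.

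The unique-2-type hypothesis makes any $\emptyset$-definable binary relation on $M$ hold either on every distinct pair or on none. Applied to $E_1(a,b): f(a)=f(b)$ and $E_2(a,b): f(a)\cap f(b)\ne\emptyset$, this gives three cases. (i) If $E_1$ holds throughout, $f$ is constant, its value is a $\emptyset$-definable finite subset of $Y$, hence contained in $Y\cap\acl(\emptyset)=\emptyset$, contradicting $f(b)\ne\emptyset$. (ii) If $E_1$ fails and $E_2$ also fails, the sets $f(b)$ are pairwise disjoint; the $\emptyset$-definable map $\bigsqcup_{b\in M}f(b)\to M$ sending each $\alpha$ to the unique $b$ containing it is a surjection from a $\thr$-rank $1$ subset of $Y$, so $\rk(M)\le 1$.

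The main obstacle is case (iii), where $|f(a)\cap f(b)|=m\ge 1$ on every distinct pair. The approach I would pursue is to pass to the $\alpha$-definable fiber $T_\alpha=\{x\in M:\alpha\in f(x)\}$, of $\thr$-rank $n-1$ over $\alpha$ by Lascar's inequality (Fact~\ref{Lascar}); modulo a stabilizer-transitivity check ($\operatorname{Stab}(a,b)$ acting transitively on $f(a)\cap f(b)$), $T_\alpha$ inherits a unique 2-type of distinct elements over $\alpha$, permitting an induction on $n$ that would reduce matters to the base case $n=2$. That base case should in turn be handled using the concrete form of $Y$ from the proof of Theorem~\ref{th:coord}: the rigidity of minimal orders in the unstable Case~1 lets us choose a coordinate in $\dcl(b)$, so that $k=1$ and $f$ embeds $M$ into $Y$ directly, while in the stable Case~2 the combinatorics of a pairwise-intersecting family of finite subsets of an indiscernible set together with $Y\cap\acl(\emptyset)=\emptyset$ (which forbids a common fixed coordinate) should rule out the configuration.
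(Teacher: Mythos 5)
Your cases (i) and (ii), and the cleanup of $Y$ to ensure $Y\cap\acl(\emptyset)=\emptyset$, are correct, and the contrapositive reformulation is a legitimate way in. The problem is case (iii): you leave it as a sketch of a programme (passing to fibers $T_\alpha$, a ``stabilizer-transitivity check,'' an induction on rank, and a two-case base step read off from the proof of Theorem \ref{th:coord}) none of which is actually carried out. It is not at all clear that $T_\alpha$ has a unique $2$-type over $\alpha$, nor how the proposed base case would be handled, so as written the proof is incomplete at precisely the point you yourself flag as ``the main obstacle.''

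In fact case (iii) is vacuous, for a short reason you overlooked. Since $M$ is infinite and transitive, there exist $a\neq b$ in $M$ with $a\thind b$. For such a pair, if $\gamma\in\acl(a)\cap\acl(b)$ then by Fact \ref{Lascar}
\[
\thr(a)=\thr(a/b)=\thr(a/b\gamma)\le\thr(a/\gamma)=\thr(a)-\thr(\gamma),
\]
so $\thr(\gamma)=0$; hence $\acl(a)\cap\acl(b)\subseteq\acl(\emptyset)$ and $f(a)\cap f(b)\subseteq Y\cap\acl(\emptyset)=\emptyset$. This rules out case (iii) outright and completes your argument, with no induction needed. It is also exactly the observation the paper uses to produce its second $2$-type. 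The paper's proof is direct rather than contrapositive: reduce to primitive $M$, set $V(a)=\dcl(a)\cap V$, note that $V(a)=V(a')$ is a trivial (hence equality) relation by primitivity, deduce $|V(a)|\ge 2$ from $\rk(M)>1$, and then exhibit both a pair sharing a coordinate in $V$ and, via the independence argument above, a pair sharing none. Once patched, your argument is essentially the contrapositive of this, with the two kinds of pairs split across cases instead of exhibited directly; the multi-step induction you propose for case (iii) is both unnecessary and unsubstantiated.
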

\begin{proof}
	This is clear if $M$ is not primitive. Assume that $M$ is primitive and let $V$ be a rank 1 coordinatizing set. Let $V(a)=\dcl(a)\cap V$. The relation $V(a)=V(a')$ defines an equivalence relation on $M$. By primitivity, it is trivial. As $M$ has rank $>1$, this implies that $V(a)$ has more than one element. Then we can find $a\neq a' \in M$ such that $\dcl(a)\cap \dcl(a')\cap V$ is non-empty. We can also find $a\neq a'\in M$ for which this intersection is empty. Hence $M$ has more than one 2-type.
\end{proof}

\section{The geometry of a finitely homogeneous dependent structure of finite \th-rank}

\begin{theorem}\label{modularity}\label{triviality}
Let $a,b$ be tuples in $M$. Then the following hold.
\begin{enumerate}
\item (Modularity:) \[a\thind_{\acl^{eq}(a)\cap \acl^{eq}(b)}  b.\]

\item (Order triviality:)
If $V$ is an ordered \th-rank one definable set, and
$\overline{V}$ is its completion, then \[\acl(ab)\cap \overline V=\left(\acl(a)\cup \acl(b)\right)\cap \overline{V}.\]

\item (Stable triviality:) If $X$ is a $\emptyset$-definable strongly minimal \th-rank one set, then \[\acl(ab)\cap X=\left(\acl(a)\cup \acl(b)\right)\cap X.\]
\end{enumerate}

\end{theorem}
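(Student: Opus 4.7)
The three parts are tightly linked. My plan is to establish parts (2) and (3) first, as strengthenings of Proposition \ref{6.1} (which handles the special case $a \thind b$), and then derive part (1) from them via the coordinatization result Corollary \ref{cor:coordinatization}.

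For parts (2) and (3), the new content relative to Proposition \ref{6.1} is the absence of the independence hypothesis $a \thind b$. I would first use Fact \ref{fact:gluing} to embed the relevant rank-one set into the $\emptyset$-interpretable family of orders (resp.\ indiscernible sets) it produces, reducing to the case where $\overline V$ (resp.\ $X$) is $\emptyset$-interpretable. Given $c \in \acl(ab) \cap \overline V$ with $c \notin \acl(b)$, the goal is to produce a tuple $b_0 \in \acl(b)$ satisfying $a \thind b_0$ and $c \in \acl(ab_0)$; then Proposition \ref{6.1} applied to $(a, b_0)$ yields $c \in \acl(a) \cup \acl(b_0) \subseteq \acl(a) \cup \acl(b)$, and $c \notin \acl(b)$ forces $c \in \acl(a)$. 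To extract $b_0$, I plan to take a Morley sequence of realizations of $\tp(b)$ over $a$, use finiteness of $\thr$-rank to locate a finite subtuple $b_0, \ldots, b_n$ over which $c$ is algebraic, and invoke the Morley property to secure the needed independence $a \thind (b_0, \ldots, b_n)$.

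For part (1), I argue by contradiction. Suppose $a \nthind_C b$ where $C = \acl^{eq}(a) \cap \acl^{eq}(b)$. By finiteness of $\thr$-rank (Proposition \ref{prop:finite rank homogeneous}) and Lascar's inequalities (Fact \ref{Lascar}), one can locate $a_0 \in \acl(a)$ with $\thr(a_0/C) = 1$ and $a_0 \in \acl(Cb) \setminus \acl(C)$. Using Corollary \ref{cor:coordinatization}, and recalling that in the finitely homogeneous setting every strictly minimal set is an indiscernible set, $a_0$ is interalgebraic over $C$ with an element of either a $C$-definable minimal order (to which part (2) applies) or a $C$-definable indiscernible set (to which part (3) applies). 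Either way, triviality applied to the pair $(C, b)$ yields $a_0 \in \acl(C) \cup \acl(b)$. Since $a_0 \notin \acl(C)$, we conclude $a_0 \in \acl(b)$, whence $a_0 \in \acl^{eq}(a) \cap \acl^{eq}(b) = C$, a contradiction.

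The main obstacle is the extraction of $b_0$ in parts (2) and (3). The delicate point is to balance two competing demands: finiteness of $\thr$-rank forces $c$ to be captured in finitely many steps along the Morley sequence, but one must simultaneously control the forking so that the resulting tuple is genuinely thorn-independent from $a$. The interplay of finite $\thr$-rank, the NIP hypothesis, and the structure theory of rank-one sets developed in Section 2 will be essential to making this step rigorous.
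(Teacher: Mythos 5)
The paper proves the three statements in the opposite order from your plan: modularity (part 1) is established first, directly, and then order triviality and stable triviality (parts 2 and 3) fall out as easy corollaries by adding $A=\acl^{eq}(a)\cap\acl^{eq}(b)$ to the language and applying Proposition \ref{6.1} over $A$. Concretely, the paper's proof of modularity reduces to showing that $a\nthind b$ implies $\acl(a)\cap\acl(b)\neq\acl(\emptyset)$; one coordinatizes $a$ via Corollary \ref{cor:coordinatization} as $(a_i)_{i\leq N}$ with each $\tp(a_{i+1}/a_{\leq i})$ of rank one, finds the first index $i$ where $a_i\in\acl(a_{<i}b)$ (so that $d:=a_{<i}$ satisfies $d\thind b$), passes through Fact \ref{6.4} to a $\emptyset$-definable family of orthogonal/independent rank-one sets, and uses Proposition \ref{6.1} and Lemma \ref{emptysetdefinable} to land an element $\alpha$ in $\acl(a)\cap\acl(b)\setminus\acl(\emptyset)$.

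The gap in your proposal is exactly where you flagged it: the extraction of $b_0\in\acl(b)$ with $a\thind b_0$ and $c\in\acl(ab_0)$. Both readings of ``a Morley sequence of realizations of $\tp(b)$ over $a$'' break down. If you take a \th-Morley sequence $(b_i)$ of $\tp(b)$ non-forking over $\emptyset$, then once you push to parameters $a$, the $b_i$ no longer realize $\tp(b/a)$ precisely when $a\nthind b$ (the case of interest), so no corresponding $c_i$ with $c_i\in\acl(ab_i)$ exists. If instead you take a \th-Morley sequence of $\tp(b/a)$ over $a$, then $b_i\nthind a$ for every $i$, and the Morley property only gives mutual independence of the $b_i$ over $a$, not independence from $a$ -- so you never recover $a\thind(b_0,\ldots,b_n)$. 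In fact, the existence of your $b_0$ is essentially a reformulation of modularity restricted to the relevant rank-one coordinate; you cannot manufacture it by Morley-sequence bookkeeping alone. This is why the paper proves modularity first, using the full strength of Lemma \ref{emptysetdefinable} (which is itself a finite-rank-plus-Morley-sequence argument, but crucially operating under an independence hypothesis $b\thind d$ that the coordinatization supplies for free at the critical index $i$).

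Your derivation of part (1) from parts (2) and (3) is essentially sound and mirrors the reverse implication implicit in the paper, though you should be careful that what triviality actually hands you is an element $\alpha\in\overline{V}$ (or $X$) with $\alpha\in\acl(b)$, and then $\alpha\in\acl(Ca_0)\subseteq\acl(a)$ gives $\alpha\in\acl(a)\cap\acl(b)=C$, forcing $a_0\in\acl(C\alpha)=\acl(C)$ -- the contradiction must run through $\alpha$ rather than through $a_0$ directly. But since parts (2) and (3) are not available to you at that stage, the whole plan does not close. The fix is to adopt the paper's order: establish part (1) via coordinatization and Lemma \ref{emptysetdefinable}, then read off parts (2) and (3) immediately from Proposition \ref{6.1} relativized to $A$.
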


\begin{proof}
To prove modularity, it is enough to show that if $a\nthind b$ then $\acl(a)\cap \acl(b)\neq \acl(\emptyset)$. The result then follows by induction on $\thr(a/b)$ (adding $\acl(a)\cap \acl(b)$ to the language). The base case is when $a\in \acl(b)$ where the result holds trivially.

So assume that $a\nthind b$.

By Corollary \ref{cor:coordinatization}, we know that $a$ as inter-algebraic with a tuple $( a_i)_{i\leq N}$ such that $\tp(a_{i+1}/a_1\dots a_i)$ has \th-rank one and is either minimal ordered or indiscernible (strictly minimal sets in a finitely homogeneous structure are indiscernible). By $\thr$-rank 1, $a_{i+1}\in \acl(a_1\dots a_ib)$ if and only if $a_{i+1}\nthind_{a_1\dots a_i} b$.

By transitivity of \th-rank and our hypothesis there is a smallest $i$ such that $a_i\in \acl(a_{<i} b)$. By transitivity of \th-independence and minimality of $i$, if $d:= a_{<i}$ then $d\thind b$.

Let $X_d=\tp(a_{i}/d)$ and let $\{W_{e'}\}_{e'\in F}$ be the sets that we get applying Fact \ref{6.4} to the family $\{X_{d'}\}_{d'\models \tp(d/\emptyset)}$. Let $e$ be the parameter of the set into which $X_d$ injects and $\alpha$ the image of $a_{i}$, so that $e\in \acl(d)$ and $\alpha\in \acl(a_id)\subseteq \acl(db)$. By Fact \ref{6.1} (adding $e$ to the language and noticing that $d\thind_e b$) we know that $\alpha\in \acl(be)$.

As $e\in \acl(d)$ and $d\thind b$, we have $e\thind b$ and by Lemma \ref{emptyset definable}, $\alpha\in \acl(b)$. But $\alpha\in \acl(a_id)\subseteq \acl(a)$, which completes the proof of modularity.

\smallskip

Order triviality now follows from (and generalizes) Proposition \ref{6.1}.  Let $A=\acl\left(a\right)\cap \acl\left(b\right)$ so that by modularity $a\thind_A b$. Adding $A$ to the language, $V$ is still an ordered \th-rank one definable set. Proposition \ref{6.1} gives that
\[\acl_A(ab)\cap \overline V=\left(\acl_A(a)\cup \acl_A(b)\right)\cap \overline V.\]
But by definition of $A$ this implies \[\acl(ab)\cap \overline V=\left(\acl(a)\cup \acl(b)\right)\cap \overline V,\] as required.

Stable triviality is proved in the same way.
\end{proof}

\begin{rem}\label{single parameter}\emph{Definability of $\overline{V}$}:
Let $(V,\leq)$ be a definable minimal order. Using triviality of algebraic closure, any element (or cut) in a \th-rank one structure is definable from a single paramater of $M$. Since there are finitely many 1-types (by $\omega$-categoricity) we conclude as in \cite[Corollary 6.2]{Si} that the completion $\overline{V}$ of $V$ is definable.
\end{rem}

\section{Constructing a distal expansion}\label{sec:distal exp}

We prove that any dependent, finitely homogeneous structure $M$ admits an expansion (of the language) that is
finitely homogeneous and distal. The construction of the distal expansion will be done by adding a dense linear order structure to strictly minimal definable subsets of $M$. The following propositions is key for the induction.

\begin{prop}\label{homogeneous coordinatization}
Let $M$ be a $\omega$-categorical, dependent structure of finite \th-rank. Then for any tuple $a\in M$ there is a tuple $(\alpha_0, \dots, \alpha_n)$ of elements in $M^{eq}$ which is interalgebraic with $a$ and such that for every $i$, $\tp(\alpha_{i}/\alpha_{<i})$ is one of the following:
\begin{itemize}
\item algebraic;
\item primitive unstable of \th-rank one;
\item stable, strictly minimal.
\end{itemize}
\end{prop}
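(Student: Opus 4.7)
The plan is to proceed by induction on $n := \thr(a/\emptyset)$, which is finite by Proposition \ref{prop:finite rank homogeneous}. If $n = 0$ then $a \in \acl(\emptyset)$ and $(\alpha_0) := (a)$ works. For $n \geq 1$, Corollary \ref{cor:coordinatization} supplies a non-algebraic $\beta \in \acl(a)$ with $\tp(\beta/\emptyset)$ of \th-rank one, lying either in a $\emptyset$-definable minimal linear or circular order, or in a $\emptyset$-definable indiscernible set. The strategy is to refine $\beta$ into a short initial sequence $(\alpha_0, \ldots, \alpha_k)$ interalgebraic with $\beta$ over $\emptyset$, with every $\tp(\alpha_i/\alpha_{<i})$ of one of the three allowed forms, and then to apply the induction hypothesis to $\tp(a/\alpha_{\leq k})$, whose \th-rank equals $n-1$ in the expansion by $\alpha_{\leq k}$.

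For the refinement, set $X := \tp(\beta/\emptyset)$. By $\omega$-categoricity $X$ carries only finitely many $\emptyset$-definable equivalence relations, so one may choose $E^*$ such that $X/E^*$ is a primitive quotient of maximal size. If $X/E^*$ is finite, set $\alpha_0 := \beta/E^* \in \acl(\emptyset)$ (an algebraic step) and recurse within the transitive $\alpha_0$-definable set $[\beta]_{E^*}$ -- strictly smaller than $X$, which gives termination of the recursion. Otherwise $X/E^*$ is of \th-rank one, so by Lascar's inequality (Fact \ref{Lascar}) applied to $\thr(\beta, \alpha_0/\emptyset) = \thr(\beta/\alpha_0) + \thr(\alpha_0/\emptyset)$ with $\thr(\beta) = \thr(\alpha_0) = 1$, the $E^*$-classes are finite, so $\beta \in \acl(\alpha_0)$ and we close the refinement by appending $\alpha_1 := \beta$. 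In the indiscernible sub-case, $\tp(\alpha_0/\emptyset)$ is stable strictly minimal (strictly minimal sets in finitely homogeneous structures are indiscernible by \cite{Lac84} / \cite{Mac}); in the minimal-order sub-case, $\tp(\alpha_0/\emptyset)$ is primitive unstable of \th-rank one.

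The main obstacle is verifying that in the minimal-order branch the primitive quotient $X/E^*$ genuinely remains unstable, rather than collapsing to a stable (strongly minimal, hence strictly minimal) set. In principle the linear or circular order on $X$ could become trivial on $X/E^*$ since, in a dense order, non-singleton finite equivalence classes are never convex, and the naive image of $\leq$ becomes symmetric on distinct classes. The key input is that a strictly minimal set in our setting is indiscernible and therefore admits no non-trivial $\emptyset$-definable binary relation, so it suffices to exhibit any non-trivial $\emptyset$-definable binary structure on $X/E^*$ descending from the minimal-order structure on $X$; this is where finite homogeneity combines with the rigidity provided by Fact \ref{circular} and the analysis of minimal orders in \cite{Si}.
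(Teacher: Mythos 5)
Your outer scheme --- induction on $\thr(a/\emptyset)$, extracting a \th-rank one $\beta\in\acl(a)$ from the coordinatization, refining $\beta$ into a short initial segment of the allowed form, then applying the inductive hypothesis to $\tp(a/\beta)$ --- is the same as the paper's. The paper's refinement is a clean two-step quotient with no internal recursion: take $F$ the finest $\emptyset$-definable equivalence relation on $X=\tp(\beta/\emptyset)$ with infinite classes, so that $X/F$ is finite and $\alpha_0:=[\beta]_F\in\acl(\emptyset)$; the class $X_{\alpha_0}$ then has no $\alpha_0$-definable equivalence relation with infinite classes, so quotienting it by the coarsest $\alpha_0$-definable relation (automatically finite-to-one) gives a primitive $\tp(\alpha_1/\alpha_0)$; finally $\alpha_2:=\beta$ is an algebraic step. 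Your variant --- choosing one $E^*$ giving a primitive quotient of maximal size and recursing into the class $[\beta]_{E^*}$ when $X/E^*$ is finite --- also works, but the termination you assert (``strictly smaller than $X$'') is not an argument: it holds because the successive classes can be realized as classes of a strictly decreasing chain of $\emptyset$-definable equivalence relations on $X$, and there are only finitely many such relations by $\omega$-categoricity.

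The more substantive issue is your third paragraph, which sets up a non-existent obstacle and then does not resolve it. You worry that, in the minimal-order branch, the primitive quotient $X/E^*$ might collapse to a stable strictly minimal set rather than staying unstable, and propose to exclude this via Fact \ref{circular}, finite homogeneity, and the rigidity of minimal orders. But the statement of the proposition explicitly lists \emph{both} ``primitive unstable of \th-rank one'' and ``stable, strictly minimal'' as allowed outcomes, so there is nothing to rule out: it does not matter which of the two branches of Corollary \ref{cor:coordinatization} produced $\beta$, nor which one the quotient lands in. What the argument actually needs is only that a primitive \th-rank one set is automatically one of the two infinite shapes. If it is unstable, it is by definition a primitive unstable \th-rank one set. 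If it is stable, Observation \ref{U=Uth} gives Morley rank one, and primitivity forces Morley degree one (the partition into degree-one components over $\acl(\emptyset)$ is $\aut(M)$-invariant, hence $\emptyset$-definable, so trivial), whence the set is strongly minimal, hence strictly minimal. The appeal to the classification of minimal orders is not needed.
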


\begin{proof}
We prove the result by induction on $\thr(a/\emptyset)$.

By Theorem \ref{th:coord}, let $X$ be a \th-rank one formula coordinatizing $\tp(a/\emptyset)$. Let $\alpha\in X\cap \acl(a)$. Let $F$ be the maximal 0-definable equivalence relation with infinite classes on $\tp(\alpha/\emptyset)$ and let $\alpha_0:=[\alpha]_F$. Because $X$ has \th-rank one, $F$ has finitely many classes and $\alpha_0\in \acl(\emptyset)$.

The type $X_{\alpha_0}:=\tp(\alpha/\alpha_0)$ does not admit any $\alpha_0$-definable equivalence relation with infinite classes. Let $E$ be the maximal $\alpha_0$-definable equivalence relation on $\tp(\alpha/\alpha_0)$. Since every $E$-class is finite, if we let $\alpha_1$ be the image of $\alpha$ under the projection $X_{\alpha_0}\rightarrow X_{\alpha_0}/E$ and $\alpha_2=\alpha$, we have that $\tp(\alpha_1/\alpha_0)$ is primitive and $\tp(\alpha_2/\alpha_1)$ is finite.

Notice that
\[\thr(a/\emptyset)=\thr(a \alpha/\emptyset)=\thr(a/\alpha)+\thr(\alpha/\emptyset)=\thr(a/\alpha)+1.\]

Adding $\alpha$ as a constant to the language we can, by induction hypothesis, find a tuple $(\alpha'_0, \dots, \alpha'_n)$ interalgebraic with $a$ over $\alpha$ satisfying the conditions of the proposition. If we let $\alpha_{i+2}=\alpha'_i$ we get the required sequence $(\alpha_0, \dots, \alpha_{n+3})$.
\end{proof}

\begin{prop}\label{adding R}
Let $M$ be a dependent structure admitting quantifier elimination in a finite relational language $\mathcal L$. Let $H_a$ be a strictly minimal definable subset such that $\thr(a/\emptyset)$ is minimal. Let $p(x):=\tp(a/\emptyset)$, and let $R(x,y;z)$ be a new ternary relation symbol (not in $\mathcal L$). Let $M^R$ be an expansion of $M$ to $\mathcal L\cup \{R\}$ such that:
\begin{itemize}
\item $R(x,y;z)\vdash p(z)\wedge x,y\in H_z$.
\item For any $a'$ with $\tp(a'/\emptyset)=p$, $R(x,y,a')$ defines a dense linear order without endpoints on $H_{a'}$.
\end{itemize}
Then $M^R$ is a finitely homogeneous dependent structure.
\end{prop}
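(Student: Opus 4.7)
The goal is to verify that $M^R$ is both finitely homogeneous (in the finite language $\mathcal L \cup \{R\}$) and dependent. The plan is: first dispatch $\omega$-categoricity of $M^R$ by counting types; then establish quantifier elimination and homogeneity of $M^R$ in $\mathcal L\cup\{R\}$ via a back-and-forth argument; and finally verify NIP by reducing to the NIP of the $\mathcal L$-structure and of dense linear orders.

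For $\omega$-categoricity, note that an $(\mathcal L\cup\{R\})$-type of an $n$-tuple $\bar c$ is determined by its $\mathcal L$-type (finitely many options, by $\omega$-categoricity of $M$) together with the $R$-atomic diagram on the finitely many triples from $\bar c$ (each a boolean choice), so $M^R$ has only finitely many $n$-types.

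For homogeneity, I would run back-and-forth to extend a partial $(\mathcal L\cup\{R\})$-isomorphism $\sigma \colon A \to B$ between finite subsets to an automorphism of $M^R$. An extension step adds $m$ to the domain and searches for $n \in M$ realizing $\sigma_* \tp_{\mathcal L}(m/A)$ and also matching the $R$-atoms involving $m$. The $R$-atoms only constrain $n$ when $m$ lies in $H_{a'}$ for some $a'\in A\cap p(M)$, or, dually, when $m\models p$ with $A\cap H_m\neq\emptyset$. In the non-algebraic case $m\notin \acl_{\mathcal L}(A)$, strict minimality of $H_{a'}$ implies that $\sigma_*\tp_{\mathcal L}(m/A)$ is realized by cofinitely many elements of $H_{\sigma(a')}$; density of the order $R(\cdot,\cdot,\sigma(a'))$ then lets me realize any prescribed cut in the finite set $\sigma(A\cap H_{a'})$, giving a compatible $n$. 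The algebraic subcase $m\in\acl_{\mathcal L}(A)\cap H_{a'}$ is the main technical obstacle, since only finitely many candidates for $n$ exist, each carrying a fixed $R$-position. My strategy is to strengthen the back-and-forth so that $A$ is closed under $\acl_{\mathcal L}\cap H_{a'}$ for each relevant $a'$ at every stage (a finite operation by strict minimality), which forces all $\mathcal L$-conjugates of $m$ to be added simultaneously. The $(\mathcal L\cup\{R\})$-type of the enlarged $A$ then pins down the linear ordering on this finite conjugate set on both sides of $\sigma$, reducing compatibility of the extension to matching two finite linear orders of the same order-type on isomorphic cuts.

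For dependence, I write $\phi(\bar x;\bar y)\in\mathcal L\cup\{R\}$ as a Boolean combination of $\mathcal L$-atoms and $R$-atoms. The $\mathcal L$-part is NIP by assumption, and each $R$-atom is a cut formula in one of the dense linear orders $R(\cdot,\cdot,a')$, which is NIP (in fact distal, by Fact \ref{fact:distal rank 1}). To combine these, I take a hypothetical IP-pattern for $\phi$, extract an indiscernible witness via Ramsey, and argue that the resulting contradiction must localize either to an $\mathcal L$-formula, contradicting dependence of $M$, or to a single dense linear order $R(\cdot,\cdot,a')$, contradicting NIP of DLO. The key inputs here are that strictly minimal sets in a finitely homogeneous structure are indiscernible sets in $\mathcal L$, and that the triviality results of Theorem \ref{triviality} prevent the orders on distinct $H_{a'}$'s from coupling with the $\mathcal L$-structure in an IP-producing way. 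The main obstacle, as flagged above, is the algebraic subcase of the back-and-forth; the NIP part is mostly a matter of bookkeeping once the $\mathcal L$/order split is established.
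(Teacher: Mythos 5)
Your overall plan---back-and-forth to establish quantifier elimination (hence finite homogeneity), then NIP via Boolean combinations of NIP atoms---matches the paper's. But there are two genuine problems. First, the opening $\omega$-categoricity step is circular: the assertion that an $(\mathcal L\cup\{R\})$-type of $\bar c$ is determined by its $\mathcal L$-type plus the $R$-atomic diagram on $\bar c$ is precisely the quantifier-elimination statement you are trying to prove, so it cannot be invoked up front to count types. In the paper, $\omega$-categoricity is a consequence of the back-and-forth argument, not a preliminary.

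Second and more seriously, your treatment of the algebraic case is too thin, and the missing ingredient is exactly the one that uses the hypothesis that $\thr(a/\emptyset)$ is minimal---a hypothesis your proposal never touches. Closing $A$ under $\acl_{\mathcal L}\cap H_{a'}$ and then ``matching finite linear orders of the same order type'' skips the crucial compatibility question: once you pick new elements $\bar s_i\subseteq H_{\sigma(\alpha_i)}$ with the prescribed $R$-order (which density provides), why do they have the correct $\mathcal L$-type over $B$? The paper resolves this by showing that the tuples $\bar t_i := \acl(\bar b\beta)\cap H_{\alpha_i}$ are \emph{mutually totally} $\mathcal L$-indiscernible over $\acl(\bar b\bar\alpha)$, so that any $R$-order-compatible choice of $\bar s_i$ automatically has the same $\mathcal L$-type. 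This in turn rests on two things your sketch does not supply: (i) orthogonality of distinct $H_{a'}$'s, via Fact~\ref{6.4}, and (ii) the distality of the type $p(x)$ (Claim~\ref{minimal is distal}), proved using Proposition~\ref{homogeneous coordinatization}, the minimality of $\thr(a/\emptyset)$, and Fact~\ref{fact:distal orth}, which together guarantee $\acl(\bar a)\cap H_{a_1}=\emptyset$ for tuples $\bar a$ from $p(M)$, as well as Claim~\ref{III} controlling algebraic points over arbitrary $b$. Without these, nothing rules out that a new element of $H_{a'}$ algebraic over $A\beta$ carries a pinned $\mathcal L$-type that is incompatible with the $R$-position you need. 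Finally, your NIP argument, while valid, is heavier than necessary: once quantifier elimination is in hand, dependence is immediate because every atomic $\mathcal L\cup\{R\}$-formula is NIP and NIP is closed under Boolean combinations---no Ramsey extraction or localization is required.
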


\begin{proof}
Let $M, a, H_a$ and $M^R$ satisfy the conditions of the lemma.

We will need a couple of claims.

\begin{claim}\label{minimal is distal}
The type $p(x)$ is distal.
\end{claim}

\begin{claimproof}
By Proposition \ref{homogeneous coordinatization} there is some sequence $(\alpha_0, \dots, \alpha_n)$ interalgebraic with $a$ such that $\tp(\alpha_{i}/\alpha_{<i})$ is either
algebraic, primitive unstable of \th-rank one, or strictly minimal. If $\tp(\alpha_{i}/\alpha_{<i})$ is infinite, then by Lascar's inequalities \[\thr(\alpha_{<i}/\emptyset)<\thr(\alpha_{\leq i}/\emptyset)\leq \thr(a/\emptyset),\] so by minimality of $\thr(a/\emptyset)$, $\tp(\alpha_{i}/\alpha_{<i})$ is either finite or unstable of \th-rank one, hence distal by \cite{Si}. By Fact \ref{fact:distal trans}, $\tp(\alpha_{\leq n}/\emptyset)$ is distal and by interalgebraicity and Fact \ref{fact:distal acl}, $\tp(a/\emptyset)$ is distal.
\end{claimproof}

\begin{claim}\label{I}Let $a_1, a_2\models p(x)$ and $\bar a$ be a tuple of realizations of $p(x)$. Then the following hold in the $\mathcal L$-structure $M$.
\begin{itemize}
\item For any $a_1, a_2\models p(x)$ and $a_2\not\in \acl(a_1)$, the sets $H_{a_1}$ and $H_{a_2}$ are orthogonal.

\item For any tuple $\bar a$ of elements satisfying $p(x)$, $H_{a}$ is strictly minimal over $\bar a$.
\end{itemize}
\end{claim}

\begin{claimproof}
The first item follows from Fact \ref{6.4}: let $W_e$ be the set in the family of orthogonal strongly minimal sets in which $H_{a_1}$ injects. By minimality of $\thr(a/\emptyset)$, we have $\thr(e/\emptyset)\geq \thr(a/\emptyset)$. Also $e\in \acl(a_1)$ hence $a_1\in \acl(e)$. So $e\not\in \acl(a_2)$ and by construction $H_{a_1}$ and $H_{a_2}$ are orthogonal.

For the second item, we have that $H_{a_1}$ is strictly minimal over $\bar a$ if and only if $\acl (\bar a)\cap H_{a_1}=\emptyset$. By Claim \ref{minimal is distal} $\tp(a'/a_1)$ is distal for any $a'\models p(x)$ so by Fact \ref{fact:distal orth} $\acl(a'a_1)\cap H_{a_1}=\emptyset$.  Fact \ref{6.1} implies that $\acl(\bar a a_1)\cap H_{a_1}= \emptyset$, as required.
\end{claimproof}

\begin{claim}\label{III}
Let $a'\models p(x)$ and let $b$ be any tuple in $M$. If $a'\not\in \acl(b)$ then $\acl(a'b)\cap H_{a'}=\emptyset$.
\end{claim}
\begin{claimproof}
Suppose otherwise. Let $I=( a_i)_{i\in \omega}$ be an infinite Morley sequence of $\tp(a'/b)$ and let $d_i\in \acl(ba_i)\cap H_{a_i}$. Since $\thr(b/I)$ is finite, for some $n$ we have $d_{n+1}\in \acl(d_1,\dots,d_n,I)$. But then $\{H_{a_i}\}_{i\in \omega}$ are not orthogonal, a contradiction to Claim \ref{I}.
\end{claimproof}

We will prove, by a back and forth argument, that for any finite tuple $\bar b$
\[
\qfL(\acl(\bar b))\cup \qfr (\acl(\bar b))\vdash \tp^{\mathcal L\cup \{R\}}(\acl(\bar b)),
\]
where $\acl(\bar b)$ is understood in the original $\mathcal L$-structure $M$ (we will use this notation from now on).

Assume that for finite tuples $\bar b$ and $\bar d$ we have an $\mathcal L\cup \{R\}$-embedding $f$ from $\acl(\bar b)$ to $\acl(\bar d)$. Notice that
by quantifier elimination in $\mathcal L$, $\tp^\mathcal L (\acl(\bar b))=\tp^\mathcal L(\acl(\bar d))$. Let $\beta\in M$ be any element, and we need to find some $\delta$ such that
\[
\qfL(\acl(\bar b \beta))\cup \qfr (\acl(\bar b \beta))=\qfL(\acl(\bar d \delta))\cup \qfr (\acl(\bar d \delta)).
\]

Let $X=p(M)$ and let
\[\bar \alpha=(\alpha_1, \dots, \alpha_k)=\left(\acl\left(\bar b\beta\right) \setminus \acl\left(\bar b\right)\right)\cap X.
\]

By quantifier elimination in $\mathcal L$, there is a sequence $\bar \gamma =(\gamma_1,\dots, \gamma_k)$ such that
\[
\qfL\left(\tp\left(\bar b \bar \alpha\right)\right)=\qfL\left(\tp\left(\bar d \bar \gamma\right)\right).
\]
Extend $f$ so that it maps $\acl(\bar b\bar \alpha)$ to $\acl(\bar d\bar \gamma)$ and is an $\mathcal L$-embedding. By Claim \ref{I} we have $H_{\alpha_i}\cap \acl(\bar\alpha)=\emptyset$ and by Claim \ref{III}, $H_{\alpha_i}\cap \acl(\alpha_i\bar b )=\emptyset$. By Theorem \ref{modularity}, $H_{\alpha_i}\cap \acl(\bar \alpha \bar b )=\emptyset$. It follows that
no 3-tuple in $\acl(\bar b \bar \alpha)^3\setminus \acl(\bar b)^3$ satisfies $R$, so $f$ is an $\mathcal L\cup \{R\}$-embedding.

\medskip

Now, let $\bar t_i=(t_i^1, \dots, t_i^{n_i})$ enumerate $\acl(\bar b \beta)\cap H_{\alpha_i}$. By orthogonality and indiscernibility of the sets $H_{\alpha_i}$, the sequences $( \bar t_i)_{i\leq k}$ are mutually totally $\mathcal L$-indiscernible. For each $i\leq k$ let $\bar s_i$ be such that $\bar s_i$ satisfies the same $R(x,y,\gamma_i)$-order type as $\bar t_i$ (which can always be found by density). Then the sequences $( \bar s_i)_{i\leq k}$ are also mutually totally $\mathcal L$-indiscernible, so that
\[
\qfL\left(( \bar t_i),\acl\left(\bar b, \bar \alpha\right)\right)=\qfL\left(( \bar s_i),\acl\left(\bar d, \bar \gamma\right)\right).
\]
Again, by $\mathcal L$-quantifier elimination, there is some $\delta$ such that
\[\qfL\left(\beta,( \bar t_i),\acl\left(\bar b, \bar \alpha\right)\right)=\qfL\left(\delta,( \bar s_i),\acl\left(\bar d, \bar \gamma\right)\right).\]

It follows that
\[\qfL\left( \acl\left(\bar b \beta\right)\right)=\qfL\left( \acl\left(\bar d \delta\right)\right)\]
and since, $R(\tau_1, \tau_2, a')$ holds for $\tau_1,\tau_2, a'\in \acl(\bar b \beta)\setminus \acl(\bar b)$ only if $a'=\alpha_i$ and $\tau_1, \tau_2=t_i^{j_1}, t_i^{j_2}$ for some $i, j_1, j_2$, we have by construction and hypothesis that
\[\qfr\left( \acl\left(\bar b \beta\right)\right)=\qfr\left( \acl\left(\bar d \delta\right)\right),\]
as required.

Having established quantifier elimination, dependence follows at once since every $\mathcal L$-formula is dependent and $R$ is dependent (with any partition of the variables).
\end{proof}

\bigskip

Notice that in the proof above we actually prove that under the conditions stated there, for any tuple $\bar b$ and any element $d$, if $\bar \alpha$ is the tuple of elements in $\acl(\bar b \beta)\setminus \acl(\bar b)$ satisfying $\tp(a/\emptyset)$ and $\bar t_i :=\acl(\beta\alpha_i)\cap H_{\alpha_i}$ then
$\tp^{\mathcal L\cup {R}}(\beta/\bar b)$ is implied by
\[
\qfL(\beta, \bar t_i, \bar \alpha_i/\bar b)\cup \bigcup_i \qfr(\bar t_i, \alpha_i).
\]

\begin{prop}\label{R algebraic}
Let $M, \mathcal L, R, p(x), a$ and $H_a$ be as in Theorem \ref{adding R}. Then, any $\mathcal L\cup \{R\}$-imaginary is interalgebraic with an $\mathcal L$-imaginary.
\end{prop}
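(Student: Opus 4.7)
The strategy is to show that every $\mathcal L \cup \{R\}$-imaginary is interdefinable, hence interalgebraic, with an $\mathcal L$-imaginary. Given $c = [\bar b]_E$ where $E$ is a $\emptyset$-definable (in $\mathcal L \cup \{R\}$) equivalence relation, I will construct an $\mathcal L$-imaginary $c^*$ interdefinable with $c$ in $M^R$.

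For each real tuple $\bar b'$, define its enrichment $\mathrm{En}(\bar b')$ to be the $\mathcal L^{eq}$-tuple consisting of $\bar b'$ together with, for each $\alpha \in \acl^{\mathcal L}(\bar b') \cap p(M)$, the $R$-ordered enumeration of the finite set $\acl^{\mathcal L}(\bar b') \cap H_{\alpha}$. This enrichment is $\mathcal L \cup \{R\}$-definable from $\bar b'$ and takes values in a product of $\mathcal L^{eq}$-sorts of real tuples.

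By the quantifier elimination of Proposition \ref{adding R} and the observation following it, $\tp^{\mathcal L \cup \{R\}}(\bar b'/\emptyset)$ is determined by $\tp^{\mathcal L}(\mathrm{En}(\bar b')/\emptyset)$. Using the order triviality of Theorem \ref{modularity} to decompose joint algebraic closures (giving $\acl(\bar b_1 \bar b_2) \cap H_{\alpha} = (\acl(\bar b_1) \cup \acl(\bar b_2)) \cap H_{\alpha}$), this correspondence lifts to pairs, so that the $\mathcal L \cup \{R\}$-type of $(\bar b_1, \bar b_2)$ is determined by the $\mathcal L$-type of $(\mathrm{En}(\bar b_1), \mathrm{En}(\bar b_2))$. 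Consequently, the $\mathcal L \cup \{R\}$-definable relation $E(\bar b_1, \bar b_2)$ arises from an $\mathcal L$-definable equivalence relation $E^*$ on pairs of enrichments with $E(\bar b_1, \bar b_2) \iff E^*(\mathrm{En}(\bar b_1), \mathrm{En}(\bar b_2))$.

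Set $c^* := [\mathrm{En}(\bar b)]_{E^*}$, an $\mathcal L$-imaginary. Interdefinability of $c$ and $c^*$ in $M^R$ follows: $c^*$ is $\mathcal L \cup \{R\}$-definable from $c$ because the enrichment is and $E$-equivalent tuples have $E^*$-equivalent enrichments; conversely, $c$ is recovered from $c^*$ by extracting $\bar b'$ from any representative enrichment and taking its $E$-class. The main technical obstacle is the lift of the quantifier elimination to pairs, which requires verifying that the $R$-interleaving between $\acl(\bar b_1) \cap H_\alpha$ and $\acl(\bar b_2) \cap H_\alpha$ — the potentially troublesome joint data not captured by individual enrichments — either does not affect $E$ or is itself recoverable from the $\mathcal L$-type of the pair of enrichments; this is where the triviality of Theorem \ref{modularity} is essential.
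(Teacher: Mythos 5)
Your reformulation of the problem is accurate, and the enrichment $\mathrm{En}(\bar b')$ is a sensible object. But the proof has a genuine gap precisely at the place you flag in your last sentence, and that flagged step is the actual content of the proposition, not a verification left to the reader.

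The claim that $\tp^{\mathcal L\cup\{R\}}(\bar b_1,\bar b_2)$ is determined by $\tp^{\mathcal L}(\mathrm{En}(\bar b_1),\mathrm{En}(\bar b_2))$ is false. Theorem~\ref{modularity} (stable triviality) gives you the set-theoretic decomposition $\acl(\bar b_1\bar b_2)\cap H_\alpha = (\acl(\bar b_1)\cup\acl(\bar b_2))\cap H_\alpha$, but $R$ is not in $\mathcal L$, so this says nothing about how the two finite sets $\acl(\bar b_1)\cap H_\alpha$ and $\acl(\bar b_2)\cap H_\alpha$ interleave under $R(\cdot,\cdot,\alpha)$. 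Concretely, if each set is a singleton $\{s\}$ and $\{t\}$ with $s\neq t$, the individual enrichments carry no information distinguishing $R(s,t,\alpha)$ from $R(t,s,\alpha)$, yet these give distinct $\mathcal L\cup\{R\}$-types of the pair. So the map $(\bar b_1,\bar b_2)\mapsto\tp^{\mathcal L}(\mathrm{En}(\bar b_1),\mathrm{En}(\bar b_2))$ does not factor through $\tp^{\mathcal L\cup\{R\}}(\bar b_1,\bar b_2)$, and one cannot conclude that $E$ is pulled back from an $\mathcal L$-definable $E^*$ on enrichments. What you would need to show instead is that $E$, specifically, is insensitive to this interleaving. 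That is exactly what the paper's proof does: fixing $d$ with $bEd$, it shows $b'Ed$ whenever $b'\equiv^{\mathcal L}_d b$ and $\qfr(\acl(b))=\qfr(\acl(b'))$, via a move-one-point-at-a-time argument that constructs, for each single-element perturbation of one $\bar s_i$, an intermediate tuple $d'$ with $d'Eb$ and $\tp(b/d')=\tp(b'/d')$ (so $b'Ed'$), using an order-automorphism $\sigma$ of $H_{\alpha_1}$ and the mutual $\mathcal L$-indiscernibility of the $H_{\alpha_i}$. That argument cannot be replaced by an appeal to triviality of algebraic closure.

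A secondary issue: your strategy aims for interdefinability, which is stronger than the stated interalgebraicity, and the evidence suggests only the weaker conclusion holds. Since the $\qfr$-type of $\acl(b)$ is extra, non-$\mathcal L$-definable data, the natural output of the argument is that $[\bar b]_E$ is determined by an $\mathcal L$-imaginary together with one of finitely many $\qfr$-types (finitely many because $\acl(b)$ is finite in an $\omega$-categorical structure); this yields interalgebraicity via the finite-fiber observation, not interdefinability. That finitely-many-$\qfr$-types step is present in the paper's proof and absent from yours.
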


\begin{proof}
As in Theorem \ref{adding R}, we use $\acl$ to mean $\acl^\mathcal L$. For any $a'\models p(x)$ and $s,t\in H_{a'}$ we write $s<_{a'}^R t$ for $\models R(s,t,a')$.

Let $E$ be an $\mathcal L\cup \{R\}$-definable equivalence relation and let $b E d$. We will show that given any $b'\models \tp^\mathcal L (b/d)$ such that $\qfr(\acl(b)/\emptyset)=\qfr(\acl(b')/\emptyset)$ we have $b'E d$. Since there are only finitely many possibilities for  $\qfr(\acl(b)/\emptyset)$, the result will follow.

Let $\bar \alpha=(\alpha_1, \dots, \alpha_n)$ be the tuple of elements in $\acl(bd)$ realizing $p(x)$, and let $\bar {s_i}=(\acl(bd\alpha_i)\setminus \acl(d\alpha_i))\cap H_{\alpha_i}$. By Theorem \ref{modularity} (stable triviality), $\bar {s_i}=(\acl(b\alpha_i)\setminus \acl(d\alpha_i))\cap H_{\alpha_i}$.

As in Theorem \ref{adding R}, $\tp^{\mathcal L}(\bar \alpha/d)\vdash \tp(\bar \alpha/d)$, so by homogeneity, for any $b'\equiv_d^{\mathcal L} b$  there is an automorphism fixing $d$ and sending $b'$ to some element $b''$ such that $\bar \alpha \in \acl(b''d)$, so we may assume without loss of generality that $\bar \alpha \in \acl (b'd)$.

Also,
\[
\tp^{\mathcal L}(b/d\alpha_1,\dots,\alpha_n, \bar{s_1}, \dots,\bar{s_n})\vdash \tp(b/d\alpha_1,\dots,\alpha_n, \bar{s_1}, \dots,\bar{s_n}),
\]
which means that it is enough to show that for any $b'\equiv_d^{\mathcal L} b$ such that $\alpha_i'=\alpha_i$ and
such that for each $i$, $\bar{s_i}'=\acl(b'\alpha_i)\cap H_{\alpha_i}$ has the same $<_{\alpha_i}^R$-order type as $\bar{s_i}$, then $b' E d$.

\begin{claim}
Let $b'\equiv_d^{\mathcal L} b$ such that $\alpha_i'=\alpha_i$ and
such that for each $i$, $\bar{s_i}'=\bar{s_i}$ except for one element in one of the tuples.
Then $b'E d$.
\end{claim}

\begin{claimproof}
We may assume that $\bar{s_i}'=\bar{s_i}$ for $i\geq 2$, and that $\bar{s_1}'=(\tau_1', \tau_2, \dots, \tau_k)$, $\bar{s_1}= (\tau_1, \tau_2, \dots, \tau_k)$ and $\tau_1'<\tau_1$. All other cases are similar.

Let $\bar \gamma=\{ x\in \acl(d\alpha_1)\mid  x<_{\alpha_1}^R \tau_1\}$ and $\bar t=\acl(d\alpha_1)\setminus \bar \gamma$.

Let $\sigma$ be an $<_{\alpha_1}^R$-automorphism of $H_{\alpha_1}$ which fixes $\tau_1,\tau_2,\dots, \tau_k,\bar t$ and sends $\bar \gamma$ to elements $<_{\alpha_1}^R$-smaller than $\tau_1'$.

For $i\geq 1$, let $\bar t_i=\acl(d\alpha_i)\cap H_{\alpha_i}$

It follows that
\[\qfr(\bar t_1, \bar t_2 \dots, \bar t_i,\dots  /\bar \alpha b)=\qfr(\sigma(\bar\gamma)\widehat{~}\bar t, \bar t_2 \dots, \bar t_i,\dots /\bar \alpha b)\] by construction and orthogonality of the $H_{\alpha_i}$, and
\[\qfL(\bar t_1, \bar t_2 \dots, \bar t_i,\dots/\bar \alpha bb' )=\qfL(\sigma(\bar\gamma)\widehat{~}\bar t, \bar t_2 \dots, \bar t_i,\dots/\bar \alpha bb')\] because $H_{\alpha_i}$ are mutually indiscernible sets in $\mathcal L$.

As in Theorem \ref{adding R} this implies that there is some $d'$ with $\acl(d'\alpha_1)\cap H_{\alpha_1}=\sigma(\gamma)\widehat{~} \bar t$, $\acl(d'\alpha_i)=t_i$ for all $i\geq 2$ and such that $\qfL(d'/\bar \alpha bb' )=\qfL(d/\bar \alpha bb')$. This implies that
$\tp(d'/\bar \alpha b)=\tp(d/\bar \alpha b)$ so that $d'Eb$.

But now $\tau_1$ and $\tau_1'$ have the same $<_{\alpha_1}^R$-type with respect to $\sigma(\gamma)\widehat{~}t=\acl(d'\alpha_1)\cap H_{\alpha_1}$, and these are the only elements in which $\acl(b')\cap \bigcup_i H_{\alpha_i}$ and $\acl(b)\cap \bigcup_i H_{\alpha_i}$ differ, so that
\[
\qfr(\acl(b)/\acl(d'))=\qfr(\acl(b')/\acl(d')),
\]
Since $\qfL(\acl(b)/\acl(d'))=\qfL(\acl(b')/\acl(d'))$ by construction of $d'$, we have by homogeneity that $\tp(b/d')=\tp(b'/d')$, so $b'Ed'$. By symmetry and transitivity, $b'Ed$, as required.
\end{claimproof}

Given any sets of disjoint ordered sets $(H_1, <_1), (H_2, <_2), \dots, (H_n, <_n)$ and any two sets $S_1$ and $S_2$ of $<_i$-ordered tuples with the same size and order type, we can choose a sequence of sets of tuples which begins with $S_1$, ends with $S_2$ and such that $S^i$ and $S^{i+1}$ differs by just one element. The proposition now follows from the claim.
\end{proof}

\begin{prop}\label{R-th-rank}
Let $M$ and $M^R$ be as in the statement of Theorem \ref{adding R}. Then \th-independence between sets and tuples in $M$ coincides in $M$ and $M^R$.
\end{prop}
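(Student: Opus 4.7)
The plan is to prove both directions of the equivalence, leveraging Proposition \ref{R algebraic} (algebraic closures in $M$ and $M^R$ agree on tuples from $M$) and the remark following Proposition \ref{adding R} (describing $\mathcal L \cup \{R\}$-types).

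For the direction ``\th-independence in $M^R$ implies \th-independence in $M$'', I would show that any $\mathcal L$-formula $\phi(x,c)$ witnessing \th-forking of $\tp^M(a/Ab)$ over $A$ via strong dividing over some $B \supseteq A$ continues to witness \th-forking in $M^R$. The characterization (\ref{str dividing}) of strong dividing by $c \in \acl(Bx)\setminus \acl(B)$ transfers via Proposition \ref{R algebraic}, and $k$-inconsistency of the conjugate formulas is preserved because $M^R$-conjugates of $c$ over $B$ form a subset of the $M$-conjugates (and an $\mathcal L$-formula has the same realizations in $M$ and $M^R$).

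For the converse direction, assume $a \thind_A b$ in $M$. The plan is to construct an $\mathcal L \cup \{R\}$-\th-Morley sequence for $\tp^{M^R}(a/A)$ over $Ab$, which would witness $a \thind_A b$ in $M^R$. Starting from an $\mathcal L$-\th-Morley sequence $(a_i)$ in $\tp^M(a/A)$ over $Ab$ (provided by the hypothesis), I would adjust each $a_i$ to an element $a_i'$ whose $\mathcal L \cup \{R\}$-type over $Ab$ (and over the previously chosen terms) matches $\tp^{M^R}(a/Ab)$. The remark following Proposition \ref{adding R} shows that such a type is determined by the $\mathcal L$-type together with the $R$-order types of certain tuples in the sets $H_\alpha$, for $\alpha$ ranging over elements of $p(M)$ in the relevant algebraic closure. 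Since distinct $H_\alpha$'s are $\mathcal L$-orthogonal (Claim \ref{I} in the proof of Proposition \ref{adding R}) and each is a dense linear order under $R$, the $R$-data on each $H_\alpha$ can be adjusted independently by density, using the finite homogeneity of $M^R$ established in Proposition \ref{adding R} to execute the adjustment.

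The main obstacle is the combinatorial execution of this inductive construction. At each step we must simultaneously realize the prescribed $R$-order types on multiple $H_\alpha$'s, preserve the $\mathcal L$-type of $a_i$, and maintain \th-independence over $A$. The density of $R$ on each $H_\alpha$ together with the orthogonality of distinct $H_\alpha$'s provides the needed flexibility, but one must verify that all required $R$-positions can be met simultaneously without disturbing previously fixed data. Once the sequence $(a_i')$ is constructed, it witnesses $a \thind_A b$ in $M^R$ via the standard Morley-sequence characterization, completing the converse direction.
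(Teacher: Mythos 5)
Your approach is genuinely different from the paper's and, in its current form, incomplete. The paper's proof is essentially a one-liner: by Theorem \ref{modularity} (modularity), $a \thind_A b$ holds in $M$ (resp.\ $M^R$) if and only if every $M$-imaginary (resp.\ $M^R$-imaginary) in $\acl(a) \cap \acl(b)$ is algebraic over $A$, and by Proposition \ref{R algebraic} the two imaginary sorts are pairwise interalgebraic, so the two conditions coincide. Modularity is the key tool you did not invoke; it converts \th-independence into a purely algebraic-closure condition, and Proposition \ref{R algebraic} is precisely the statement that algebraic closure of imaginaries is unchanged. You do cite Proposition \ref{R algebraic}, but only to transfer the strong-dividing characterization, which handles at most one direction.

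Concretely: your first direction (\th-independence in $M^R$ implies \th-independence in $M$, argued contrapositively by transferring an $\mathcal L$-formula that \th-forks) is plausible and can be made to work, since adding $R$ only refines types, so $k$-inconsistency of the conjugate family over the smaller conjugacy class is inherited, and non-algebraicity of $\tp(c/B)$ transfers by Proposition \ref{R algebraic}. But your converse direction has a real gap. First, \th-independence in rosy theories does not have a clean Morley-sequence characterization of the form you invoke; the relevant characterizations are via local \th-ranks or via the algebraic-closure condition that modularity provides. Second, even granting a suitable sequence criterion, the ``simultaneous adjustment'' step you describe --- matching $R$-order-types on many $H_\alpha$'s while preserving the $\mathcal L$-type of each $a_i'$ and maintaining \th-independence over $A$ --- is exactly where the difficulty lies, and you flag it without resolving it. The paper avoids this combinatorics entirely. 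If you want to pursue your route, you would need to replace the Morley-sequence criterion by the local-rank criterion from \cite{On} and then show that the local ranks agree in $M$ and $M^R$, but that is a substantially longer argument than simply observing that modularity plus Proposition \ref{R algebraic} already give the result.
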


\begin{proof}
Let $b, d$ be two tuples in $M$ and let $A\subseteq M$.

By Theorem \ref{modularity}, $a\thind_A b$ in $M$ if any $\mathcal L$-imaginary in $\acl(a)\cap \acl(b)$ is algebraic over $A$. By Proposition \ref{R algebraic}, this happens if and only if any $\mathcal L\cup \{R\}$-imaginary in $\acl(a)\cap \acl(b)$ is algebraic (in $M^R$) over $A$, which again by  Theorem \ref{modularity} is equivalent to having $a\thind_A b$ in $M^{R}$.
\end{proof}

\begin{theorem}\label{th:distal exp}
Let $M$ be a dependent structure of finite $\thr$-rank admitting quantifier elimination in a finite relational language $\mathcal L$. Then there is a
distal, dependent, finitely homogeneous structure $N$ such that $M$ is a reduct of $N$.
\end{theorem}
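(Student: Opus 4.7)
The plan is to construct $N$ by iteratively applying Proposition \ref{adding R}, each time adding a new ternary relation $R$ that realises a dense linear order without endpoints on one $\emptyset$-orbit of stable strictly minimal definable sets, until no such orbits remain. The motivation is that by Proposition \ref{homogeneous coordinatization}, every tuple of $M$ is interalgebraic with a coordinatising sequence whose successive types are either algebraic, unstable of \th-rank $1$, or stable strictly minimal; and by Facts \ref{fact:distal rank 1}, \ref{fact:distal acl}, and \ref{fact:distal trans}, once every such coordinatising step is algebraic or unstable of \th-rank $1$, the whole structure is distal. Hence it suffices to remove the stable strictly minimal pieces from the coordinatisation.

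At the inductive step I would pick a strictly minimal stable definable set $H_a$ with $\thr(a/\emptyset)$ minimal and apply Proposition \ref{adding R} to form $M^R$. Then $M^R$ is again dependent and finitely homogeneous in the finite relational language $\mathcal L \cup \{R\}$; by Proposition \ref{R-th-rank}, \th-independence and hence \th-rank are unchanged, so Proposition \ref{homogeneous coordinatization} continues to apply in $M^R$; and by Proposition \ref{R algebraic}, $(\mathcal L\cup \{R\})$-imaginaries are interalgebraic with $\mathcal L$-imaginaries, so the menu of coordinatising sorts is essentially unchanged. The orbit of $H_a$, however, is now equipped with an $\emptyset$-definable dense linear order, so it is unstable in $M^R$ and no longer contributes to the list of stable strictly minimal orbits.

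The main obstacle will be to verify termination. By $\omega$-categoricity of $M$ together with finite \th-rank, there should be only finitely many $\emptyset$-orbits of parameters for stable strictly minimal definable sets appearing in coordinatisations: the parameter tuples are of bounded length (bounded by the \th-rank) and in a finite relational language there are only finitely many $\emptyset$-types of any fixed length. Each application of Proposition \ref{adding R} strictly decreases this count: the orbit of $H_a$ leaves the list (being now ordered, hence unstable), and any new stable strictly minimal orbit in $M^R$ would, by Proposition \ref{R algebraic}, be interalgebraic with a sort already present in $M$; since $M^R$ has more definable sets than $M$, strict minimality and stability in $M^R$ both descend to $M$, so any such orbit was already present in $M$. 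After finitely many iterations, every coordinatising step in the resulting expansion $N$ is algebraic or unstable of \th-rank $1$, and hence $N$ -- still finitely homogeneous, dependent, and of finite \th-rank -- is distal, as required.
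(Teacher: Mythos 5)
Your proposal follows essentially the same strategy as the paper's proof: process the stable strictly minimal sorts in order of increasing $\thr$-rank of their parameters, applying Proposition \ref{adding R} to each; use Propositions \ref{R-th-rank} and \ref{R algebraic} to carry the invariants through; and finish with Proposition \ref{homogeneous coordinatization} together with Facts \ref{fact:distal rank 1}, \ref{fact:distal trans}, and \ref{fact:distal acl} to verify distality of the final expansion. The only difference is in bookkeeping for termination: the paper fixes a finite enumeration $\{p_i\}_{i<N}$ of the relevant complete $\mathcal L$-types once and for all and marches through it, whereas you argue that the count of stable strictly minimal $\emptyset$-orbits strictly decreases at each step; the paper's fixed-enumeration device sidesteps the small concern that $\emptyset$-orbits can split when passing to an expansion, but your argument that a stable strictly minimal set in $M^R$ descends to one in $M$ (stability and strong minimality both pass down to reducts, and parameters are controlled by Proposition \ref{R algebraic}) addresses the same point.
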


\begin{proof}
Let $\{p_i(x,\bar z)\}_{i<N}$ be an enumeration of all complete types such that $p_i(c, \bar a)$ implies that $\tp(c/\bar a)$ is strictly minimal and such that if $p_i(c_i, \bar a_i)$, $p_j(c_j, \bar a_j)$ and $\thr(\tp(\bar a_i/\emptyset))<\thr(\tp(\bar a_j/\emptyset))$ all hold, then $i<j$.

Let $M_0=M$, $\mathcal L_0=\mathcal L$. Suppose that we have defined an expansion $\mathcal L_n$ of $\mathcal L$ and an $\mathcal L_n$-structure on $M$ such that for all $i< n$
and $c', \bar a'\in M$ satisfying $p_i$, the type $\tp^{\mathcal L_n}(c'/\bar a')$ is unstable.

Let $c,\bar a\models p_n(x,z)$. Since $\tp^{\mathcal L}(c/\bar a)$ is strictly minimal by construction, and \th-independence between elements in $M$ coincides in $M_0$ and $M_{n+1}$ (Proposition \ref{R-th-rank}), $\tp^{\mathcal L_n}(c/\bar a)$ is a \th-rank one set. If it is unstable (in $\mathcal L_n$) we let $\mathcal L_{n}=\mathcal L_{n+1}$. If it is stable, then there are tuples $\gamma$ and $\alpha$ $\mathcal L_n$-interalgebraic with $c$ and $\bar a$ such that $\tp^{\mathcal L_n}(\gamma/\alpha)$ is strictly minimal.

Now, by hypothesis on the $\mathcal L_n$-structure $M$ and on the enumeration of the $p_i$, all types over sets of \th-rank less than
\[
\thr^{\mathcal L}(\bar a/\emptyset)=\thr^{\mathcal L}(\alpha/\emptyset)=\thr^{\mathcal L_{n}}(\alpha/\emptyset)
\] are unstable, so in particular not strictly minimal. So the definable subset $H_\alpha:=\tp(\gamma/\alpha)$ in the $\mathcal L_n$-structure $M$ satisfies the conditions of Proposition \ref{adding R}. If we let $R(x,y,z)$ and $M^R$ be as in the proposition, then in the $\mathcal L_{n+1}:=\mathcal L\cup \{R\}$-structure $M^R$ the type $\tp(\gamma/\alpha)$ is unstable. By interalgebraicity (and the fact that algebraic closure is preserved) $\tp(c/\bar a)$ is unstable, as required.

Consider now the $\mathcal L'=\bigcup_n \mathcal L_n$-structure on $M$. We prove that every type is distal. Let $a$ be any tuple in $M$. By Proposition \ref{homogeneous coordinatization} there is an $M^{eq}$-tuple $(\alpha_0, \dots, \alpha_n)$ which is $\mathcal L$-interalgebraic with $a$ and such that $\tp^{\mathcal L}(\alpha_{i+1}/\alpha_i)$  is either
finite,  unstable of \th-rank one, or stable and strictly minimal. By construction $\tp^{\mathcal L'}(\alpha_{i+1}/\alpha_i)$ is either finite, or unstable. Since $a$ and $(\alpha_0, \dots, \alpha_n)$ are interalgebraic in $\mathcal L'$, it follows from facts \ref{fact:distal trans} and \ref{fact:distal acl} that $\tp(a/\emptyset)$ is distal, as required.
\end{proof}

\begin{cor}
There are, up to inter-definability, at most countably many structures $M$ which are homogeneous in a finite relational language, dependent and rosy.
\end{cor}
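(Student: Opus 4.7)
The plan is to combine Theorem \ref{th:distal exp} with Fact \ref{fact:distal fin axiom} to associate to each such $M$ a distal, finitely axiomatizable, finitely homogeneous expansion $N$, and then count both the possibilities for $N$ and the possibilities for $M$ as a reduct of $N$.

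First I would apply Theorem \ref{th:distal exp} to $M$: this yields a distal, dependent, finitely homogeneous structure $N$ (in some finite relational language $\mathcal L^+\supseteq \mathcal L$) such that $M$ is a reduct of $N$. By Fact \ref{fact:distal fin axiom}, $N$ is finitely axiomatizable. Thus, up to isomorphism, $N$ is determined by a finite amount of combinatorial data, namely a finite relational language $\mathcal L^+$ together with a single $\mathcal L^+$-sentence $\varphi$ axiomatizing $\operatorname{Th}(N)$. Since there are only countably many finite relational languages (up to renaming of symbols) and countably many sentences in each, the number of such $N$ up to isomorphism is at most countable.

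Next I would fix such an $N$ and count the possibilities for $M$. The structure $M$ is homogeneous in the finite relational language $\mathcal L$, so $M$ is interdefinable with the structure on its universe whose basic relations are the (finitely many) symbols of $\mathcal L$ interpreted in $N$. Each such relation is $\emptyset$-definable in $N$. Since $N$ is finitely homogeneous and hence $\omega$-categorical, there are, for every arity $n$, only finitely many $\emptyset$-definable $n$-ary relations in $N$, so countably many $\emptyset$-definable relations in total. A reduct of $N$ in a finite relational language is thus determined, up to interdefinability, by a finite subset of a countable set, and there are only countably many such subsets. Combining with the previous paragraph gives countably many possibilities for $M$ up to interdefinability, as required.

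There is essentially no obstacle here beyond Theorem \ref{th:distal exp} itself; the only point to be careful about is that once $N$ is fixed, \emph{all} the data specifying $M$ as a reduct of $N$ (in a finite relational language) is a finite tuple of $\emptyset$-definable relations of $N$, which is a countable amount of data thanks to $\omega$-categoricity of $N$.
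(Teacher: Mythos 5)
Your proof follows the same overall route as the paper's: pass to a distal, finitely homogeneous expansion $N$ via Theorem~\ref{th:distal exp}, invoke Fact~\ref{fact:distal fin axiom} to get finite axiomatizability, count the possible $N$'s, and then count reducts. Two remarks. First, Theorem~\ref{th:distal exp} requires $M$ to have finite $\thr$-rank; you should first cite Proposition~\ref{prop:finite rank homogeneous} to supply this hypothesis (the paper does so explicitly as its opening step). Second, your final paragraph actually improves on the paper's terse ``each has countably many reducts'': the general claim that an $\omega$-categorical structure has only countably many reducts up to interdefinability is not known (indeed Thomas's conjecture on finitely many reducts of finitely homogeneous structures is open), whereas you correctly restrict attention to reducts specified by finitely many $\emptyset$-definable relations of $N$ — which is exactly what is needed here, since the $M$'s under consideration are finitely homogeneous — and deduce countability from Ryll-Nardzewski. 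So, modulo the missing appeal to Proposition~\ref{prop:finite rank homogeneous}, your argument is correct and arguably phrases the last step more carefully than the paper.
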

\begin{proof}
By Proposition \ref{prop:finite rank homogeneous}, any such structure has finite $\thr$-rank. By Theorem \ref{th:distal exp}, they are all reducts of distal finitely homogeneous structures. By \cite[Theorem 8.3]{Si}, all such structures are finitely axiomatizable. It follows that there are only countably many. Finally, each has countably many reducts.
\end{proof}

\bibliography{binary}

\begin{thebibliography}{Sim21b}

\bibitem[ACGZ]{distalValFields}
Matthias Aschenbrenner, Artem Chernikov, Allen Gehret, and Martin Ziegler.
\newblock Distality in valued fields and related structures.
\newblock preprint.

\bibitem[AZ86]{AZ1}
Gisela Ahlbrandt and Martin Ziegler.
\newblock Quasi finitely axiomatizable totally categorical theories.
\newblock {\em Ann. Pure Appl. Logic}, 30:63 -- 82, 1986.

\bibitem[CHL85]{CHL}
G.~Cherlin, L.~Harrington, and A.H. Lachlan.
\newblock $\omega$-categorical, $\omega$-stable structures.
\newblock {\em Annals of Pure and Applied Logic}, 28(2):103 -- 135, 1985.

\bibitem[Hru89]{Hr}
Ehud Hrushovski.
\newblock Totally categorical structures.
\newblock {\em Trans. Amer. Math. Soc.}, 313:131 -- 159, 1989.

\bibitem[Lac74]{La}
A.~Lachlan.
\newblock Two conjectures regarding the stability of ω-categorical theories.
\newblock {\em Fundamenta Mathematicae}, 81(2):133--145, 1974.

\bibitem[Lac84]{Lac84}
A.~H. Lachlan.
\newblock On countable stable structures which are homogeneous for a finite
  relational language.
\newblock {\em Israel Journal of Mathematics}, 49(1):69--153, 1984.

\bibitem[Mac91]{Mac}
Dugald Macpherson.
\newblock Interpreting groups in $\omega$-categorical structures.
\newblock {\em Journal of Symbolic Logic}, 56:1317--1324, 1991.

\bibitem[Ons06]{On}
Alf Onshuus.
\newblock Properties and consequences of thorn-independence.
\newblock {\em J. Symbolic Logic}, 71(1):1--21, 2006.

\bibitem[Sim13]{distal}
Pierre Simon.
\newblock Distal and non-distal theories.
\newblock {\em Annals of Pure and Applied Logic}, 164(3):294--318, 2013.

\bibitem[Sim21a]{Si1}
Pierre Simon.
\newblock Linear orders in {NIP} structures.
\newblock preprint, 2021.

\bibitem[Sim21b]{Si}
Pierre Simon.
\newblock {NIP} $\omega$-categorical structures: the rank 1 case.
\newblock preprint, 2021.

\end{thebibliography}

\end{document}